\documentclass[10pt]{amsart}
\usepackage{amsmath,amssymb,amsthm,mathtools,mathrsfs,booktabs,array,longtable}
\usepackage[T1]{fontenc}
\usepackage{lmodern}
\usepackage{needspace}
\usepackage{microtype}
\usepackage[colorlinks=false,linkcolor=black,citecolor=black,urlcolor=black,
  linkbordercolor={1 0 0},citebordercolor={1 0 0},urlbordercolor={1 0 0},
  pdfborder={0 0 1}]{hyperref}
\hypersetup{pdftitle={Consecutive Rankin-Cohen Bases, Full-Spark Periods, and Divisor-Tau Congruences},pdfauthor={Kelvin Lam}}
\newtheorem{theorem}{Theorem}[section]
\newtheorem{proposition}[theorem]{Proposition}
\newtheorem{lemma}[theorem]{Lemma}
\newtheorem{corollary}[theorem]{Corollary}
\theoremstyle{definition}
\newtheorem{definition}[theorem]{Definition}
\theoremstyle{remark}
\newtheorem{remark}[theorem]{Remark}
\numberwithin{equation}{section}

\newcommand{\sgn}{\operatorname{sgn}}
\newcommand{\diag}{\operatorname{diag}}
\newcommand{\SL}{\mathrm{SL}_2(\mathbb Z)}
\newcommand{\RR}{\mathbb R}

\newcommand{\CC}{\mathbb C}
\newcommand{\HH}{\mathbb H}
\newcommand{\Tcal}{\mathcal T}

\title[FULL-SPARK PERIODS AND CONSECUTIVE BRACKETS]{CONSECUTIVE RANKIN--COHEN BASES, FULL-SPARK PERIODS, AND DIVISOR--TAU CONGRUENCES}
\author{Kelvin Lam}
\date{}
\keywords{Ramanujan tau function, divisor sums, Rankin--Cohen brackets, periods of modular forms, mixed parity, total positivity, all-weight convolution formula, modular congruences, Cramer coordinates, Jacobian}

\newcommand{\Q}{\mathbf Q}

\allowdisplaybreaks
\begin{document}
\begin{abstract}
For $r\ge1$, let $K=2r+14$ and $d=\dim S_K$. Our analytic starting point is a
strict positivity theorem for determinants of Mellin period functionals. It implies
a full-spark statement for periods in a fundamental half-range: every collection
of at most $d$ distinct period functionals there is linearly independent, with no
parity restriction. In particular this proves the mixed odd--even period
independence conjecture of Xue.

As a consequence, the consecutive first Rankin--Cohen brackets
$[E_{2r+10-2j},E_{2j+2}]_1$, $1\le j\le d$, form a basis of $S_K$; more
generally we determine the sign of every admissible ordered determinant of first
Eisenstein brackets. Applying the consecutive basis to $E_{2r+2}\Delta$ gives a
uniform exact all-weight formula for
\[
C_r(n)=\sum_{m=1}^{n-1}\sigma_{2r+1}(m)\tau(n-m).
\]
We then reduce this same characteristic-zero coordinate identity modulo primes.
For every all-weight good prime $p$, the reduced canonical blocks form a basis of
the reduction of the natural $\mathbf Z_{(p)}$ cusp-form lattice, and the resulting
coefficient vector is uniquely determined by
$D_j\equiv Z_j(r)\pmod p$, where $Z_j(r)$ is the characteristic-zero Cramer
vector. Every prime $p\ge K-3$ lies in the coefficient-integral range, and for
each fixed $r$ only finitely many such primes fail the determinant condition.
Thus canonical prime-wise reductions exist for infinitely many primes in every
weight. Their genuinely sparse Ramanujan-type specializations occur when Cramer
coordinates vanish modulo $p$; we give exact numerator criteria and simultaneous
vanishing invariants for this phenomenon.

Finally, after division by $\Delta$, the canonical blocks lie in $M_{2r+2}$, and
for homogeneous modular forms $f,g\in\mathbf Q[E_4,E_6]$ we prove the Jacobian
identity
\[
\frac{[f,g]_1}{\Delta}
=-3456\,\det\begin{pmatrix}f_{E_4}&f_{E_6}\\ g_{E_4}&g_{E_6}\end{pmatrix}.
\]
This converts the Cramer system into a one-variable coordinate problem in
$T=E_6^2/E_4^3$.
\end{abstract}
\maketitle

\section{Introduction}
For positive integers $r,n$, put
\begin{equation}\label{eq:C}
C_r(n)=\sum_{m=1}^{n-1}\sigma_{2r+1}(m)\tau(n-m),
\qquad \sigma_a(n)=\sum_{t\mid n}t^a,
\end{equation}
where $\tau(n)$ is Ramanujan's tau function and an empty sum is zero.
The generating function of this convolution is encoded by the cusp form
$E_{2r+2}\Delta$. Consequently a suitable basis of $S_{2r+14}$ converts
\eqref{eq:C} into identities involving only divisor sums and the single
term $\tau(n)$ at the target index. Rankin--Cohen brackets have long been
used in such identities; see, for example, \cite{RS}. The additional
issue addressed here is the independence of a particular minimal family,
with its orientation fixed by the Fourier expansion.

The basis step is analytic in every dimension, but the Mellin determinant used to
prove it has a stronger consequence. It yields a full-spark theorem for periods:
within a fundamental half-range, any collection of at most $d=\dim S_K$ distinct
period functionals is linearly independent, even when odd and even indices are
mixed. Theorem~\ref{thm:intro-periods}, proved as Corollary~\ref{cor:full-spark-periods},
makes this precise, and Corollary~\ref{cor:xue-mixed} identifies the mixed-parity conjecture of
Xue~\cite[Conjecture~1.2]{XueMixed} as an immediate consequence.

The arithmetic statement obtained from the consecutive bracket basis is a direct
modular reduction of the \emph{same} all-weight coordinate formula. Thus no new
family of modular forms is introduced in characteristic $p$: the canonical atoms
are precisely the consecutive normalized Rankin--Cohen blocks already used in
characteristic zero. Section~\ref{sec:uniquecong} works throughout with the
reduction of a specified $\mathbf Z_{(p)}$ lattice, rather than with an
unspecified full space of modular forms in characteristic $p$. At every all-weight
good prime the reduced canonical blocks remain a basis of this lattice reduction,
and the resulting congruence has a unique coefficient vector, equal to the
reduction of the characteristic-zero Cramer coordinates $Z_j(r)$. Since every
prime $p\ge K-3$ is automatically in the coefficient-integral range and the
nonzero rational determinant $\det N_r$ has only finitely many prime divisors,
every fixed $r$ has infinitely many good primes.

The word \emph{canonical} is important here. The existence of these reductions
for almost every prime is a basis-theoretic consequence of the characteristic-zero
identity; by itself it is not an exceptional Ramanujan congruence. The additional
arithmetic phenomenon occurs when one or more Cramer coordinates vanish modulo a
good prime, forcing canonical blocks to disappear and producing genuinely sparse
formulas. We reserve the phrase \emph{Ramanujan-type} for these sparse
specializations. Uniqueness is always relative to the fixed consecutive
Rankin--Cohen basis supplied by Theorem~\ref{thm:mainbasis}; it is not a claim
that no other identity can be written using a different collection of modular
forms. Section~\ref{sec:cramer} records the exact Cramer arithmetic of coefficient
vanishing, Section~\ref{sec:newcongruences} gives explicit sparse congruences, and
Section~\ref{sec:jacobian} identifies the quotient by $\Delta$ with a Jacobian
basis in $\mathbf Q[E_4,E_6]$ and reduces the coordinate system to one variable.

Throughout, the bracket convention is
\begin{equation}\label{eq:RCintro}
[f,g]_1=kfDg-\ell(Df)g,\qquad D=q\frac{d}{dq},
\quad f\in M_k,\quad g\in M_\ell.
\end{equation}
Changing this convention to its negative changes a $d$-column determinant
by $(-1)^d$; the convention is therefore essential to the sign theorem.

\begin{theorem}[Full-spark period theorem]\label{thm:intro-periods}
Let $K\ge12$ be even, $d=\dim S_K>0$, and $h=(K-2)/2$. If
\[
0\le u_1<\cdots<u_m<h,\qquad m\le d,
\]
then the period functionals $\rho_{u_1},\ldots,\rho_{u_m}$ are linearly
independent in $S_K^*$. If $K\equiv0\pmod4$, the endpoint $u_m=h$ may also be
included. Thus the assertion applies to arbitrary mixtures of odd and even
periods in the fundamental half-range.
\end{theorem}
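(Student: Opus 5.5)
The plan is to reduce the independence statement to the nonvanishing of a single $m\times m$ determinant, and to get that nonvanishing from a total-positivity (basic composition formula) argument for Mellin integrals. Concretely, $\rho_{u_1},\ldots,\rho_{u_m}$ are independent in $S_K^*$ if and only if there are cusp forms $g_1,\ldots,g_m\in S_K$ with $\det\bigl(\rho_{u_i}(g_j)\bigr)_{i,j\le m}\neq0$. Since $m\le d$, it suffices to produce, for each admissible index set, such a family with a determinant of known strict sign. I would take the $g_j$ to be the first Rankin--Cohen brackets of Eisenstein series $[E_{K-2-2j},E_{2j}]_1$ in the normalization \eqref{eq:RCintro}, or a suitable sub-collection of them. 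For these the periods are computable.

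\textbf{Step 1: Mellin form of the periods.} Write $\rho_u(f)=\int_0^\infty f(it)\,t^{u}\,dt$. Using $f(i/t)=(-1)^{K/2}t^{K}f(it)$, fold the integral onto $(1,\infty)$:
\[
\rho_u(f)=\int_1^\infty f(it)\bigl(t^{u}+(-1)^{K/2}t^{K-2-u}\bigr)\,dt .
\]
For $u<h$ the two exponents are distinct, and $t^{K-2-u}$ dominates on $(1,\infty)$. This is exactly why the half-range appears. At $u=h$ the kernel is $(1+(-1)^{K/2})t^{h}$, which is nonzero precisely when $K\equiv0\pmod4$; this accounts for the endpoint clause.

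\textbf{Step 2: Rankin--Zagier evaluation for brackets.} For $g=[E_a,E_b]_1$, unfolding (Zagier's method for periods of products of Eisenstein series) expresses $\rho_u(g)$ as a finite combination of products $L(E_a,s)L(E_b,s')$, that is, of products of zeta values and Bernoulli numbers, with explicit rational and $\pi$-power factors. I would then rewrite the period matrix as $\bigl(\int_0^\infty t^{u_i}\Phi_j(t)\,dt\bigr)$, where each $\Phi_j$ is a positive combination of Lambert-type series $\sum_n \sigma_{*}(n)e^{-2\pi nt}$.

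\textbf{Step 3: Total positivity.} By the basic composition formula,
\[
\det\Bigl(\int K(u_i,t)\,L(t,j)\,dt\Bigr)=\int_{t_1<\cdots<t_m}\det K(u_i,t_k)\,\det L(t_k,j)\,dt .
\]
The kernel $t^{u}$ (equivalently $e^{u\log t}$) is strictly totally positive for increasing $u$, by the Vandermonde property of generalized power systems. On $(1,\infty)$, the folded kernel $t^{u}\pm t^{K-2-u}$ with $u<h$ should still form an extended Chebyshev system, ordered by the dominant exponent $K-2-u$. The remaining input is that the $\Phi_j$ kernel is sign-regular, so that the whole integrand has a constant sign and is not identically zero. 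This yields strict positivity of the determinant up to an explicit sign depending only on $m$, $K$ and the bracket convention. In particular it yields nonvanishing, with no parity condition on the $u_i$, because positivity never separates odd from even exponents.

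\textbf{Main obstacle.} The hard step is the sign-regularity of the second kernel $j\mapsto\Phi_j$, i.e.\ showing that the Eisenstein-bracket side is totally positive in the right variables. The folding in Step~1 mixes $t^u$ with $t^{K-2-u}$, and the Rankin--Zagier expressions contain both $\zeta$-values and Bernoulli factors with alternating signs. My first attempt would be to avoid the folding entirely and work on $(0,\infty)$. There I would express $\Phi_j(t)$ through $E_a(it)E_b(it)$-type products whose Mellin transforms factor as $\Gamma$-ratios times zeta products. I would then verify total positivity of the resulting Mellin matrix via the Pólya-frequency property of the Gamma ratios $\Gamma(s+a)/\Gamma(s)$, together with Vandermonde-type evaluations for the $\zeta$ factors. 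If that fails, I would fall back on choosing a different test basis, for instance $q$-expansion echelon forms $q^j+O(q^{d+1})$, for which the Mellin matrix is close to a generalized Vandermonde matrix. I would then control the tail by a dominance estimate.
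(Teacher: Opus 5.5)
\textbf{There is a genuine gap.} It is the step you flag yourself: the proposal never establishes strict sign-regularity of the form side of the composition formula, and that is the entire content of the theorem.

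Choosing the brackets as test forms does not help. For $m=d$, nonvanishing of $\det(\rho_{u_i}(g_j))$ does not depend on the basis $(g_j)$, so a basis is useful only if its values on the imaginary axis have visible Chebyshev structure, and the brackets have none. Worse, expand in the Hecke eigenbasis and use the Lei--Ni--Xue pairing $\langle g,[E_a,E_b]_1\rangle=c_{a,b}L(g,K-2)I_a(g)$ (Lemma~\ref{lem:Rankin}). Your matrix then factors as $(\rho_{u_i}(g))\cdot\diag\bigl(L(g,K-2)/\langle g,g\rangle\bigr)\cdot(I_{a_j}(g))\cdot\diag(c_{a_j,b_j})$. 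So for $m=d$ its nonvanishing is equivalent to independence of the $\rho_{u_i}$ \emph{together with} independence of the even periods $\rho_{b_j}$. The route is circular; the paper's logic runs the other way, deducing the bracket basis from the period theorem.

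Your first fallback also fails. The relation $f(i/t)=(-1)^{K/2}t^Kf(it)$ makes the evaluation vectors at $t$ and $1/t$ proportional. Hence for $m\ge2$ the form-side determinant changes sign across the hypersurfaces $t_kt_l=1$. No choice of forms gives a constant-sign integrand on $(0,\infty)$, so the fold to $(1,\infty)$ is essential. Moreover, the Mellin transform of a bracket is $\Gamma(s)(2\pi)^{-s}$ times a Dirichlet series whose coefficients contain the shifted convolutions $\sum_m(an-(a+b)m)\sigma_{a-1}(m)\sigma_{b-1}(n-m)$ of \eqref{eq:RCcoeff}. It is not a $\Gamma$-ratio times zeta products, so the P\'olya-frequency idea has nothing to act on.

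\textbf{The missing idea} is a basis whose restriction to $(1,\infty)$ is exactly a Chebyshev system. The paper uses $\Phi_m=\Delta^mE_4^{3(d-m)+\alpha}E_6^\beta$ with $K=12d+4\alpha+6\beta$. Your echelon fallback is close in spirit, since $\Phi_m=q^m+O(q^{m+1})$. What makes it work, however, is the exact factorization $\Phi_m(iy)=W(y)J(y)^{d-m}$ for $y>1$, not a tail estimate. Here $W=\Delta^dE_4^\alpha E_6^\beta>0$ because $E_6(iy)>0$ for $y>1$, and $J=E_4^3/\Delta$ is strictly increasing because $J'=2\pi E_4^2E_6/\Delta>0$. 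Hence $\det(\Phi_m(iy_\nu))$ is $(-1)^{\binom d2}$ times a positive Vandermonde in the $J(y_\nu)$.

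On the kernel side, with $\varepsilon=(-1)^{K/2}$ and $\lambda=h-u$, your folded kernel is $t^h(t^{-\lambda}+\varepsilon t^{\lambda})$, that is, $2t^h\cosh(\lambda\log t)$ or $-2t^h\sinh(\lambda\log t)$. You only assert its strict total positivity. It follows from Cauchy--Binet applied to the Taylor series together with generalized Vandermonde positivity. The $\cosh$ case admits $\lambda=0$, which is your endpoint.

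The continuous Cauchy--Binet identity then gives $\det(I_{t_j}(\Phi_m))>0$ for all real $t_1>\cdots>t_d>h$. Finally, $\rho_{K-2-u}=(-1)^{u+1}\rho_u$ moves each $u_i<h$ to $t_i=K-2-u_i>h$. For $m<d$ one pads with $d-m$ further \emph{real} parameters $>h$. This is why the Mellin determinant is proved for real rather than integral $t$, and your ``suitable sub-collection'' leaves this point open. (Minor: $\rho_u=i^{u+1}\int_0^\infty f(it)t^u\,dt$; the scalar is harmless.)
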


\Needspace{13\baselineskip}
\begin{theorem}[Consecutive basis and determinant sign]\label{thm:mainbasis}
Let $r\ge1$, $K=2r+14$, and $d=\dim S_K=\dim M_{2r+2}$. Set
\begin{equation}\label{eq:ab}
a_j=2r+10-2j,\qquad b_j=2j+2,\qquad
R_j=[E_{a_j},E_{b_j}]_1\quad(1\le j\le d).
\end{equation}
Then $R_1,\ldots,R_d$ form a basis of $S_K$. With rows and columns in
increasing order, the matrix
\begin{equation}\label{eq:M}
M_r=\bigl([q^n]R_j\bigr)_{1\le n,j\le d}
\end{equation}
satisfies
\begin{equation}\label{eq:mainsign}
(-1)^{\binom d2}\det M_r>0.
\end{equation}
\end{theorem}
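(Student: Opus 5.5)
We plan to prove Theorem~\ref{thm:mainbasis} by pairing the brackets $R_j$ against period functionals via the Rankin--Selberg method. The determinant of $M_r$ then factors as a change of basis times a Mellin period determinant, and the sign is read off from the strict positivity theorem behind Theorem~\ref{thm:intro-periods}.

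\textbf{Step 1 (Rankin--Selberg identity).} For a normalized Hecke eigenform $f\in S_K$ with $K=a+b+2$ and $a,b\ge4$ even, the unfolding method gives the classical formula
\[
\langle f,[E_a,E_b]_1\rangle=c(a,b)\,L(f,K-1)\,L(f,K-b).
\]
Here $c(a,b)$ is an explicit nonzero constant built from Gamma factors, powers of $\pi$ and Bernoulli numbers. One Eisenstein series is unfolded; the other contributes its $L$-series shifted to $s=K-b$. The value $L(f,K-1)$ is a fixed nonvanishing factor independent of $j$. The value $L(f,K-b_j)=L(f,2r+12-2j)$ is, up to a positive Gamma factor, the period $r_{K-b_j-1}(f)$. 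We will therefore express
\[
R_j=\kappa_j\sum_{f}\frac{L(f,K-1)\,\overline{\rho_{u_j}(f)}}{\langle f,f\rangle}\,f,
\]
with $u_j$ running over consecutive even indices in the half-range determined by $b_j=2j+2$. One point must be checked carefully: the cases $a_j$ or $b_j=2$ do not occur, since $b_1=4$ and $a_d\ge4$ because $d\le(2r+2)/12+1$. If instead $a_j=2$ were allowed, quasimodular corrections would be needed.

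\textbf{Step 2 (factorization).} Let $F$ be the matrix of Fourier coefficients of the eigenbasis, $\Lambda=\diag(L(f,K-1)/\langle f,f\rangle)$, and $P=(\rho_{u_j}(f))_{f,j}$. Then
\[
M_r=F\,\Lambda\,P\,\diag(\kappa_j).
\]
The factor $P$ is nonsingular by Theorem~\ref{thm:intro-periods}, since the $u_j$ are $d$ distinct indices below $h$ (or equal to $h$ only when $K\equiv0\pmod4$). This already gives the basis statement.

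\textbf{Step 3 (sign).} The basis claim is immediate from Step 2, but the sign requires the strict positivity theorem in its quantitative form. We will write $\det(F\Lambda P)$ as the Gram-type determinant $\det\bigl(\sum_f a_f(n)\,\rho_{u_j}(f)\,L(f,K-1)/\langle f,f\rangle\bigr)_{n,j}$. Each entry is a Petersson coefficient of a Poincaré-like kernel; equivalently, it is the $n$-th coefficient of the cusp form representing the functional $\rho_{u_j}$, weighted by $L(f,K-1)$. We will then apply the positivity theorem for determinants of Mellin period functionals, evaluated against the Poincaré series $P_n$ (whose pairing with $f$ gives $a_f(n)$). That theorem gives $\det\bigl(\rho_{u_j}(P_n)\bigr)>0$ in a fixed orientation.

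The factor $L(f,K-1)$ needs separate treatment. Since $K-1$ lies in the region of absolute convergence, we will absorb it by writing $L(f,K-1)$ as a Dirichlet series with positive coefficients in $\sigma$-type weights. Then $\Lambda$ is a positive combination of Hecke operators, and the Cauchy--Binet formula preserves positivity.

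The main obstacle is bookkeeping the signs. The factors $\kappa_j$ alternate through the signs of $B_{a_j}B_{b_j}$ and of $i^{u_j}$, and reversing the column order relative to increasing $u_j$ contributes $(-1)^{\binom d2}$. We expect the product of the $\operatorname{sgn}\kappa_j$ to be constant, because consecutive $a_j,b_j$ both shift by $2$ and the two Bernoulli sign flips cancel. The residual $(-1)^{\binom d2}$ should come from ordering $u_j$ decreasingly in $j$. We would verify this orientation against the case $d=1$, say $r=5$, $K=24$ where $d=2$, by direct computation of $M_r$.
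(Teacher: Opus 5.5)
You set up the same architecture as the paper: pair $R_j$ with a real eigenbasis, factor $M_r$ through a diagonal of $L$-values and a period matrix, and invoke full spark. That does give the basis statement, but Step~1 is misstated. In weight $K=a+b+2$ the operator $D$ in $[\cdot,\cdot]_1$ contributes an extra factor $m$ to the unfolded series. Unfolding $E_a$, so that $[E_a,E_b]_1$ becomes a Poincar\'e-type sum of $a\,DE_b$, gives $\langle f,[E_a,E_b]_1\rangle=ae_b\Gamma(K-1)(4\pi)^{1-K}\sum_m a_f(m)\sigma_{b-1}(m)m^{-(K-2)}$. This equals $ae_b\Gamma(K-1)(4\pi)^{1-K}L(f,K-2)L(f,a+1)/\zeta(a)$, i.e.\ $L(f,K-2)$ times the even Mellin value $I_a(f)$. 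Your $L(f,K-1)L(f,K-b)$ omits this shift; it has the shape of Rankin's formula for a product. It also leads to the odd index $K-b_j-1=a_j+1$, contradicting your own ``consecutive even indices''. Full spark has no parity restriction, so the basis claim survives the correction, but every constant that enters the sign changes.

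The sign \eqref{eq:mainsign}, which is the real content of the theorem, is not established by Step~3. There are three gaps.

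(i) Theorem~\ref{thm:perioddet} concerns the specific basis $\Phi_m$, whose evaluation determinant is sign-definite because $E_4^3/\Delta$ increases along $y>1$. It says nothing directly about Poincar\'e series. Moreover, even periods are purely imaginary, so ``$\det(\rho_{u_j}(P_n))>0$'' is not meaningful as written. You need a Gram transfer: if $P_n=\sum_mY_{mn}\Phi_m$, then $\langle\Phi_k,P_n\rangle=c_n[q^n]\Phi_k$ gives $GY=Q^{\mathsf T}\diag(c_n)$ with $\det G>0$, $\det Q=1$ and $c_n>0$. The paper packages this as $GX=T^{\mathsf T}AC$, using the real functionals $I_{a_j}$ with $a_1>\cdots>a_d>(K-2)/2$, which is exactly the orientation in which $\det A>0$.

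(ii) Your treatment of $\Lambda$ is wrong. $L(f,s)=\sum a_f(n)n^{-s}$ does not have positive coefficients, and calling $f\mapsto L(f,s)f=\sum_n n^{-s}T_n$ a ``positive combination'' of Hecke operators says nothing about the sign of its determinant. Since $\Lambda$ is diagonal, you only need $L(f,K-2)>0$ for each eigenform. This follows from the Euler product: each local factor is positive for real $s>K/2$, and the product converges absolutely for $s>K/2+1$, a range containing $K-2$.

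(iii) You leave $\prod_j\sgn\kappa_j$ and the source of $(-1)^{\binom d2}$ as an expectation to be tested numerically; no finite check fixes the sign for all $d$ (and $r=5$ has $d=2$, not $d=1$). With the corrected unfolding, the constant in front of $L(f,K-2)I_a(f)$ is a positive multiple of $e_b=-2b/B_b$, so its sign is $(-1)^{b/2}$. This is exactly the paper's $c_{a,b}$, which it extracts from \cite[Proposition~1.3]{LNX}. These signs alternate in $j$. Since the $I_{a_j}$ are already in the positive orientation, the whole factor $(-1)^{\sum_j b_j/2}=(-1)^{\binom d2+2d}$ comes from these constants, not from a reordering of indices.
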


Here and below, $E_k$ has constant term $1$. For an explicit statement of
the convolution consequence, define, for every $n\ge1$,
\begin{align}
h_{r,j}(n)={}&-\frac{B_{a_j}}2\,n\sigma_{b_j-1}(n)
 +\frac{B_{b_j}}2\,n\sigma_{a_j-1}(n)\notag\\
&+\sum_{m=1}^{n-1}\bigl(a_jn-(a_j+b_j)m\bigr)
\sigma_{a_j-1}(m)\sigma_{b_j-1}(n-m),\label{eq:h}
\end{align}
and put
\begin{equation}\label{eq:Nz}
N_r=(h_{r,j}(n))_{1\le n,j\le d},\qquad
z_r=B_{2r+2}\bigl([q^n](E_{2r+2}\Delta)\bigr)_{n=1}^d.
\end{equation}
Write $N_r[j\leftarrow z_r]$ for replacement of column $j$ by $z_r$.

\begin{theorem}[All-weight convolution formula]\label{thm:mainconv}
For every $r\ge1$, $N_r$ is nonsingular and the rational numbers
\begin{equation}\label{eq:Zcramer}
Z_j(r)=\frac{\det N_r[j\leftarrow z_r]}{\det N_r},\qquad 1\le j\le d,
\end{equation}
are well defined independently of $n$. For every $n\ge1$,
\begin{equation}\label{eq:compactC}
C_r(n)=\frac{1}{4(r+1)}
\left(B_{2r+2}\tau(n)-\sum_{j=1}^dZ_j(r)h_{r,j}(n)\right).
\end{equation}
Equivalently, if $h_r(n)=(h_{r,1}(n),\ldots,h_{r,d}(n))^{\mathsf T}$, then
\begin{equation}\label{eq:border}
C_r(n)=\frac{1}{4(r+1)\det N_r}
\det\begin{pmatrix}N_r&z_r\\h_r(n)^{\mathsf T}&B_{2r+2}\tau(n)\end{pmatrix}.
\end{equation}
\end{theorem}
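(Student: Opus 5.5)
The plan is to deduce the statement directly from Theorem~\ref{thm:mainbasis}. The functions $h_{r,j}(n)$ should turn out to be, up to a fixed nonzero scalar, the Fourier coefficients of the brackets $R_j$. The vector $z_r$ should likewise be a rescaling of the Fourier coefficients of $E_{2r+2}\Delta$.

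\emph{Step 1: identify $h_{r,j}$ with $R_j$.} Write $E_k=1+\sum_{n\ge1}e_k(n)q^n$ with $e_k(n)=-(2k/B_k)\sigma_{k-1}(n)$, and abbreviate $a=a_j$, $b=b_j$. From the convention \eqref{eq:RCintro},
\[
[q^n]R_j=a\,n\,e_b(n)-b\,n\,e_a(n)+\sum_{m=1}^{n-1}\bigl(a(n-m)-bm\bigr)e_a(m)e_b(n-m).
\]
Multiplying by $c_j=B_aB_b/(4ab)$ gives exactly \eqref{eq:h}, term by term:
\begin{itemize}
\item the first term becomes $-\tfrac{B_a}{2}n\sigma_{b-1}(n)$;
\item the second becomes $+\tfrac{B_b}{2}n\sigma_{a-1}(n)$;
\item the convolution becomes $\sum_m\bigl(an-(a+b)m\bigr)\sigma_{a-1}(m)\sigma_{b-1}(n-m)$.
\end{itemize}
Hence $\sum_{n\ge1}h_{r,j}(n)q^n=c_jR_j$. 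Since $a,b\ge4$ are even, the Bernoulli numbers $B_a,B_b$ are nonzero, so $c_j\ne0$. It follows that $N_r=M_r\,\mathrm{diag}(c_1,\dots,c_d)$. By \eqref{eq:mainsign}, $\det M_r\ne0$, and therefore $N_r$ is nonsingular.

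\emph{Step 2: the target form.} Since $E_{2r+2}=1-\tfrac{4(r+1)}{B_{2r+2}}\sum\sigma_{2r+1}(m)q^m$, we get
\[
B_{2r+2}\,[q^n](E_{2r+2}\Delta)=B_{2r+2}\tau(n)-4(r+1)C_r(n)
\]
for every $n\ge1$. The form $E_{2r+2}\Delta$ lies in $S_K$, and the $c_jR_j$ form a basis of $S_K$ by Theorem~\ref{thm:mainbasis}. So there are unique rationals $Z_j$ with
\[
B_{2r+2}E_{2r+2}\Delta=\sum_j Z_j\sum_n h_{r,j}(n)q^n.
\]
Comparing the coefficients of $q^1,\dots,q^d$ gives $N_rZ=z_r$. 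Because $N_r$ is invertible, Cramer's rule yields \eqref{eq:Zcramer}. The phrase ``independently of $n$'' reflects that these $Z_j$ are the basis coordinates of the form, not quantities depending on any particular coefficient index. Comparing the coefficient of $q^n$ for arbitrary $n$ then gives
\[
B_{2r+2}\tau(n)-4(r+1)C_r(n)=\sum_jZ_j(r)h_{r,j}(n),
\]
which rearranges to \eqref{eq:compactC}.

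\emph{Step 3: the bordered form.} Take the Schur complement of the $(d+1)\times(d+1)$ matrix with respect to the invertible block $N_r$:
\[
\det\begin{pmatrix}N_r&z_r\\h_r^{\mathsf T}&B_{2r+2}\tau(n)\end{pmatrix}
=\det N_r\,\bigl(B_{2r+2}\tau(n)-h_r(n)^{\mathsf T}N_r^{-1}z_r\bigr).
\]
Since $N_r^{-1}z_r$ is the coordinate vector $Z(r)$, this equals $\det N_r\cdot 4(r+1)C_r(n)$, which is \eqref{eq:border}.

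The only genuinely nontrivial input is the nonvanishing of $\det M_r$, and that is supplied by Theorem~\ref{thm:mainbasis}. The main thing to verify carefully is the bookkeeping in Step 1, namely the normalizing constant $c_j$ and its signs, so that $h_{r,j}$ matches \eqref{eq:h} exactly.
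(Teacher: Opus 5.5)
Your proposal is correct and follows essentially the same route as the paper. The paper also writes $N_r=M_rS$ with $S=\diag\bigl(B_{a_j}B_{b_j}/(4a_jb_j)\bigr)$, gets $N_rZ=z_r$ from the basis expansion of $E_{2r+2}\Delta$ supplied by Theorem~\ref{thm:mainbasis}, and finishes with Proposition~\ref{prop:extract} and the same Schur-complement identity. The only difference is that the paper first takes coordinates $\lambda_j$ in the unnormalized brackets $R_j$ and then rescales them to $Z_j$, whereas you expand directly in the normalized blocks $c_jR_j$.
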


Thus the result gives a finite exact determinantal formula in every
weight. It does not assert a bounded-length scalar expression for all coordinate
coefficients as $r$ grows: the size $d$ of the coordinate problem grows with the
weight. Theorem~\ref{thm:mainbasis} proves directly that the required denominators
are nonzero in every dimension.

For later use write
\[
G_k=-\frac{B_k}{2k}E_k,\qquad
H_j=[G_{a_j},G_{b_j}]_1,\qquad
T_r=-2(2r+2)G_{2r+2}\Delta.
\]
Then $[q^n]H_j=h_{r,j}(n)$ and Theorem~\ref{thm:mainconv} is equivalent to
\[
T_r=\sum_{j=1}^d Z_j(r)H_j.
\]

\begin{theorem}[Unique canonical all-weight congruences]\label{thm:maincongruence}
Let $K=2r+14$, $s=2r+2$, and $d=\dim S_K$. Let $p$ be a prime for which the
canonical blocks and target are $p$-integral, $2s$ is a $p$-adic unit, and
$\det N_r$ is a $p$-adic unit. Then there is a unique vector
$(D_1,\ldots,D_d)\in\mathbf F_p^d$ such that, for every $n\ge1$,
\[
C_r(n)\equiv
\frac{B_s}{2s}\tau(n)-\frac1{2s}\sum_{j=1}^dD_jh_{r,j}(n)\pmod p.
\]
It is given by
\[
D_j\equiv Z_j(r)\pmod p\qquad(1\le j\le d).
\]
If $Z_j(r)=U_j(r)/V_j(r)$ is in lowest terms, then $V_j(r)$ is a $p$-adic unit
and
\[
D_j=0\quad\Longleftrightarrow\quad p\mid U_j(r).
\]
Moreover every prime $p\ge K-3$ is automatically in the coefficient-integral
range. For each fixed $r$, all but finitely many primes in this range satisfy the
determinant condition, so every $C_r$ has infinitely many prime-wise unique
canonical all-weight congruences.
\end{theorem}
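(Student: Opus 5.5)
We reduce the characteristic-zero identity $T_r=\sum_{j=1}^d Z_j(r)H_j$ of Theorem~\ref{thm:mainconv} modulo $p$, working in the lattice $L=S_K\cap\mathbf Z_{(p)}[[q]]$. Here $T_r=-2s\,G_s\Delta$ and $G_s=-\tfrac{B_s}{2s}E_s$. Writing $E_s=1+\tfrac{2s}{-B_s}\sum\sigma_{s-1}(n)q^n$ gives $T_r=B_s\Delta-2s\sum_n C_r(n)q^n$ after expanding the product with $\Delta$; this is the identity behind \eqref{eq:compactC}. Dividing by $2s$, which is a $p$-adic unit, turns Theorem~\ref{thm:mainconv} coefficientwise into $C_r(n)=\tfrac{B_s}{2s}\tau(n)-\tfrac1{2s}\sum_j Z_j(r)h_{r,j}(n)$.

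\emph{Existence.} Consider the $d\times d$ system $N_r Z=z_r$. By hypothesis the entries of $N_r$ and of $z_r$ (the block and target coefficients) are $p$-integral, and $\det N_r$ is a $p$-adic unit. Cramer's rule \eqref{eq:Zcramer} then shows $Z_j(r)\in\mathbf Z_{(p)}$. Hence each $V_j(r)$ is prime to $p$, and $D_j:=Z_j(r)\bmod p$ is defined. Reducing the displayed identity for all $n$ gives the congruence, since $H_j$ and the target are $p$-integral. Because $V_j$ is a unit, $D_j=0$ if and only if $p\mid U_j$.

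\emph{Uniqueness.} Suppose $D'$ is a second vector in $\mathbf F_p^d$ satisfying the congruence. Subtracting the two congruences gives $\sum_j(D_j-D_j')h_{r,j}(n)\equiv0$ for all $n$, in particular for $1\le n\le d$. The reduction $\bar N_r$ is invertible over $\mathbf F_p$ because $\det N_r$ is a unit, so $D=D'$. As a byproduct, the reduced $\bar H_j$ are linearly independent. Moreover $\{H_j\}$ is a $\mathbf Z_{(p)}$-basis of $L$: any $f\in L$ is determined by its first $d$ coefficients (Sturm/dimension, as in Theorem~\ref{thm:mainbasis}), and its coordinates are obtained by inverting $N_r$ over $\mathbf Z_{(p)}$. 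This justifies calling the reduced blocks a basis of $L/pL$.

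\emph{Primes $p\ge K-3$.} Since $a_j,b_j,s\le 2r+10=K-4<p$, von Staudt--Clausen shows that the denominators of $B_{a_j},B_{b_j},B_s$ have only prime factors $\ell$ with $\ell-1\le K-4$, i.e. $\ell\le K-3$. When $p=K-3$ we have $p-1=K-4$, so $p$ can divide the denominator of $B_{K-4}$. The parity constraints on the indices should be checked to rule this case out; if they do not, the bound must be handled separately. The factor $\tfrac12$ in $h_{r,j}$ and the unit $2s$ are harmless because $p$ is odd and $p>s$. Thus $h_{r,j}(n)$ and $z_r=B_s[q^n](E_s\Delta)$ are $p$-integral. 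Finally, $\det N_r$ is a nonzero rational by Theorem~\ref{thm:mainbasis}, since $H_j$ is a nonzero scalar multiple of $R_j$. It therefore has finitely many prime factors, and all other primes $p\ge K-3$ qualify, giving infinitely many good primes.

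The main obstacle is the integrality bookkeeping at the boundary $p=K-3$, together with the fact that the basis statement for $L/pL$ requires $\{H_j\}$ to span $L$ over $\mathbf Z_{(p)}$. For the latter, the unimodularity of $N_r$ over $\mathbf Z_{(p)}$ combined with first-$d$-coefficient injectivity on $S_K$ settles it.
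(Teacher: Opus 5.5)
Most of your proposal follows the paper's route and is correct. You reduce the characteristic-zero identity $T_r=\sum_j Z_j(r)H_j$ coefficientwise, get $Z_j(r)\in\mathbf Z_{(p)}$ from Cramer's rule (so $p\nmid V_j$), prove uniqueness by invertibility of $\bar N_r$ on the first $d$ coefficients, and get infinitely many good primes from the finitely many prime divisors of the nonzero rational $\det N_r$. Your uniqueness step (subtract two congruences and use only $1\le n\le d$) is the coordinate form of the paper's statement that the $\overline{H_j}$ form a basis of $\overline{\mathcal S}_{K,p}$.

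\textbf{The gap: the boundary prime $p=K-3$.} You leave open the claim that every prime $p\ge K-3$ is coefficient-admissible. The worry comes from an overestimate. You bound the Bernoulli indices by $2r+10=K-4$, but since $j\ge1$ the largest index is $a_1=2r+8=K-6$. The remaining indices satisfy $b_j<a_j\le a_1$ and $s=K-12$, so $B_{K-4}$ never occurs. For $p\ge K-3$ one has $p-1\ge K-4>K-6\ge k$ for every index $k$ that appears. Hence $(p-1)\nmid k$, and von Staudt--Clausen shows that $p$ divides none of the denominators of $B_s$, $B_{a_j}$, $B_{b_j}$. Together with $p$ odd and $p>K-6\ge a_j,b_j,s$, this gives $p$-integrality of $N_r$ and $z_r$, and shows $2s$ is a unit. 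That is exactly the paper's safe-range argument.

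The remedy you suggest, checking parity constraints, could not have worked. If $B_{K-4}$ really appeared, $p=K-3$ would divide its denominator, because $p-1=K-4$ divides $K-4$. With the sharp index bound the boundary case disappears, and the rest of your argument goes through unchanged.
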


\subsection{Relation to period results}
Lei--Ni--Xue \cite{LNX} proved strong finite and asymptotic independence results
for even periods and related first Rankin--Cohen brackets to them. Xue's later
work on odd--even interaction proved pairwise mixed-parity independence in a
large range and formulated the conjecture that any mixed collection in a
fundamental half-range should be independent whenever its cardinality does not
exceed $\dim S_K$ \cite[Conjecture~1.2]{XueMixed}. Ni--Song--Su--Xue--Yin
\cite{NSSXY} subsequently obtained explicit spanning subsets and recorded the
corresponding arbitrary-family even-period independence problem as
\cite[Conjecture~4.2]{NSSXY}.

Theorem~\ref{thm:perioddet} gives more than is needed for the bracket basis: its
strict Mellin determinant is insensitive to the parity of the integral period
indices. After reflection across the center by the functional equation, it proves
that every subset of at most $d$ period functionals in a fundamental half-range
is independent; see Corollaries~\ref{cor:full-spark-periods} and
\ref{cor:xue-mixed}. Thus the even-period arbitrary-family assertion and Xue's
mixed odd--even conjecture are both consequences of the same total-positivity
argument. Fukuhara's explicit basis \cite[Theorem 2.2]{Fukuhara} gives a
particular basis of odd periods; the present statement is of a different type,
namely a full-spark assertion for all parities in the half-range.

For the determinant orientation of first brackets, Section~5 translates the
published period pairing \cite[Proposition 1.3]{LNX} into a real Mellin pairing
with an explicitly signed column factor. The classical pairing identity is a
bridge used here, not a new result.

\section{Modular forms, periods, and coefficient extraction}
All modular forms are for $\SL$, and $q=e^{2\pi iz}$ for $z\in\HH$.
We write $S_K(\RR)$ for the real vector space of forms with real Fourier
coefficients, and define $M_s(\RR)$ analogously. We also write $S_K(\Q)$
for the $\Q$-vector space of cusp forms with rational Fourier coefficients.
Unless a field is indicated, dimensions of modular-form spaces are taken over $\CC$.
Our Bernoulli convention is
\[
\frac{t}{e^t-1}=\sum_{m\ge0}B_m\frac{t^m}{m!},\qquad B_1=-\frac12.
\]
For even $k\ge4$,
\begin{equation}\label{eq:E}
E_k=1+e_k\sum_{n\ge1}\sigma_{k-1}(n)q^n,
\qquad e_k=-\frac{2k}{B_k}.
\end{equation}
Also
\begin{equation}\label{eq:Delta}
\Delta=\frac{E_4^3-E_6^2}{1728}
=q\prod_{n\ge1}(1-q^n)^{24}=\sum_{n\ge1}\tau(n)q^n.
\end{equation}
We use the Petersson product
\[
\langle f,g\rangle=\int_{\SL\backslash\HH}
 f(z)\overline{g(z)}y^K\,\frac{dx\,dy}{y^2},\qquad z=x+iy,
\]
linear in the first variable. On $S_K(\RR)$ it is real, symmetric and
positive definite, as is also clear from a real orthogonal Hecke basis.

\begin{lemma}\label{lem:dim}
For even $s\ge4$, multiplication by $\Delta$ is an isomorphism
$M_s\longrightarrow S_{s+12}$ over $\CC$, and restricts to an
isomorphism $M_s(\RR)\longrightarrow S_{s+12}(\RR)$ over $\RR$.
Moreover,
\[
S_K(\RR)\otimes_{\RR}\CC\simeq S_K,
\qquad \dim_{\RR}S_K(\RR)=\dim_{\CC}S_K.
\]
Furthermore,
\[
\dim M_s=\begin{cases}
\lfloor s/12\rfloor,&s\equiv2\pmod{12},\\
\lfloor s/12\rfloor+1,&s\not\equiv2\pmod{12}.
\end{cases}
\]
If $K\ge12$ is even and $d=\dim S_K>0$, then uniquely
\begin{equation}\label{eq:kappa}
K=12d+\kappa,\qquad \kappa\in\{0,4,6,8,10,14\}.
\end{equation}
\end{lemma}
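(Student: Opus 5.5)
The plan is to derive everything from the classical structure theorem $M_*=\CC[E_4,E_6]$ and the facts about $\Delta$ and $E_k$ recorded in \eqref{eq:E} and \eqref{eq:Delta}. For the isomorphism $M_s\to S_{s+12}$, I would first check that multiplication by $\Delta$ lands in $S_{s+12}$: the product is modular of weight $s+12$, and since $\Delta=q+O(q^2)$ it vanishes at the cusp. Injectivity is immediate because $\Delta$ has no zeros on $\HH$ (product formula), so $f\Delta=0$ forces $f=0$. For surjectivity, take $g\in S_{s+12}$. Then $g/\Delta$ is holomorphic on $\HH$, and its $q$-expansion begins at $q^0$, because $g=O(q)$ while $\Delta$ has a simple zero at the cusp. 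It is also weight-$s$ invariant, so $g/\Delta\in M_s$.

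The real statement follows from the same argument. Since $\Delta$ has rational coefficients with leading coefficient $1$, the quotient $g/\Delta$ has real coefficients exactly when $g$ does.

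For $S_K(\RR)\otimes\CC\simeq S_K$, I would use complex conjugation of Fourier coefficients, $g\mapsto g^c(z)=\overline{g(-\bar z)}$. This is an antilinear involution preserving $S_K$, and its fixed space is $S_K(\RR)$. Every $g$ then decomposes uniquely as $\frac{g+g^c}{2}+i\,\frac{g-g^c}{2i}$ with both parts real. Alternatively, one can note that $S_K=\Delta M_{K-12}$ and that $M_{K-12}$ has the monomial basis $E_4^aE_6^b$ with rational coefficients. Either route gives the equality of dimensions.

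For the dimension formula, I would count monomials $E_4^aE_6^b$ with $4a+6b=s$, using that they form a basis of $M_s$ (the standard consequence of $M_*=\CC[E_4,E_6]$ with $E_4,E_6$ algebraically independent). Equivalently, I would use $\dim M_s=\dim M_{s-12}+1$ for $s\ge4$, which follows from $M_s=\CC E_s\oplus\Delta M_{s-12}$ together with the isomorphism just proved. The base cases are $\dim M_s$ for $s\in\{0,2,4,6,8,10\}$, namely $1,0,1,1,1,1$, and induction gives $\lfloor s/12\rfloor$ when $s\equiv2$ and $\lfloor s/12\rfloor+1$ otherwise. The only mild care needed is for $s=14$, where $\dim M_{14}=1=\dim M_2+1$. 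Here the identity $M_s=\CC E_s\oplus S_s$ with $S_{14}=\Delta M_2=0$ still holds, so the recursion is valid for all even $s\ge4$.

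For \eqref{eq:kappa}, I would write $\dim S_K=\dim M_{K-12}$ and apply the formula with $s=K-12$. If $K\not\equiv2\pmod{12}$, then $d=\lfloor (K-12)/12\rfloor+1=\lfloor K/12\rfloor$, so $K=12d+\kappa$ with $\kappa\in\{0,4,6,8,10\}$. If $K\equiv2\pmod{12}$, then $d=\lfloor K/12\rfloor-1$, so $K=12d+14$. Uniqueness holds because the listed residues $\kappa$ are distinct modulo $12$, except for $\kappa=14$ versus $2$, and $2$ is not in the list.

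There is no real obstacle here. The step that needs the most care is the bookkeeping of the $s\equiv2$ case in the recursion and in the choice $\kappa=14$.
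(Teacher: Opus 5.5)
Your proposal is correct and follows essentially the same route as the paper: division by $\Delta$ (no zeros on $\HH$, simple zero at the cusp), preservation of real Fourier coefficients under multiplication and division, coefficientwise complex conjugation for the complexification, and the graded-ring description $\CC[E_4,E_6]$ applied to $S_K=\Delta M_{K-12}$ for the dimension formula and for $\kappa$. Your intrinsic involution $g\mapsto\overline{g(-\bar z)}$ and the recursion $M_s=\CC E_s\oplus\Delta M_{s-12}$ are harmless variants of the paper's real-monomial-basis argument and its direct appeal to the monomial count.
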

\begin{proof}
The discriminant has no zero on $\HH$ and has a simple zero at the cusp,
so division by $\Delta$ proves the complex isomorphism. Both
multiplication and division preserve real Fourier coefficients, which
also proves the real isomorphism. The monomial bases supplied by
$\CC[E_4,E_6]$ and $S_K=\Delta M_{K-12}$ have real Fourier
coefficients. Coefficientwise complex conjugation therefore fixes
exactly their real spans. This proves the complexification statement
and the equality of real and complex dimensions. The dimension formula
follows from the standard graded-ring description
$\bigoplus M_s=\CC[E_4,E_6]$; see \cite{Zagier}. Applying it to
$S_K=\Delta M_{K-12}$ gives \eqref{eq:kappa}. In particular, residue
class $K\equiv2\pmod{12}$ corresponds to $\kappa=14$, not to $2$.
\end{proof}

\begin{proposition}\label{prop:extract}
Let $F_r=E_{2r+2}\Delta$. For every $n\ge1$,
\begin{equation}\label{eq:extract}
[q^n]F_r=\tau(n)-\frac{4(r+1)}{B_{2r+2}}C_r(n).
\end{equation}
\end{proposition}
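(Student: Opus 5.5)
We expand the product $F_r=E_{2r+2}\Delta$ directly from the $q$-expansions \eqref{eq:E} and \eqref{eq:Delta}, then read off the $q^n$ coefficient.

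Put $s=2r+2$, so that $e_s=-2s/B_s=-4(r+1)/B_{2r+2}$. By \eqref{eq:E},
\[
E_s=1+e_s\sum_{m\ge1}\sigma_{s-1}(m)q^m,
\]
where $s-1=2r+1$. By \eqref{eq:Delta}, $\Delta=\sum_{k\ge1}\tau(k)q^k$.

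Multiplying the two expansions gives
\[
F_r=\Delta+e_s\sum_{m\ge1}\sum_{k\ge1}\sigma_{2r+1}(m)\tau(k)q^{m+k}.
\]
The coefficient of $q^n$ in the double sum is obtained by grouping the terms with $m+k=n$. Because $m\ge1$ and $k\ge1$, the index $m$ runs from $1$ to $n-1$ with $k=n-m$. This is exactly $C_r(n)$ from \eqref{eq:C}. For $n=1$ the range is empty and the contribution is $0$, consistent with the empty-sum convention. Hence
\[
[q^n]F_r=\tau(n)+e_sC_r(n)=\tau(n)-\frac{4(r+1)}{B_{2r+2}}C_r(n),
\]
which is \eqref{eq:extract}.

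No step is a real obstacle. The only points to check are the normalization $e_k=-2k/B_k$ with $k=2r+2$, and the fact that $B_{2r+2}\ne0$ for $r\ge1$, which holds because Bernoulli numbers of positive even index are nonzero. Together these show that the constant $-4(r+1)/B_{2r+2}$ is well defined.
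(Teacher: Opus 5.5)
Your proposal is correct and follows the paper's own proof: you multiply the $q$-expansions of $E_{2r+2}$ and $\Delta$, take $\tau(n)$ from the constant term, and identify the remaining terms as $e_{2r+2}C_r(n)$ with $e_{2r+2}=-4(r+1)/B_{2r+2}$. Your explicit checks of the normalization, the summation range $1\le m\le n-1$, and $B_{2r+2}\neq0$ are what the paper's one-line argument leaves implicit.
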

\begin{proof}
Multiply the two series in \eqref{eq:E} and \eqref{eq:Delta}; the constant
term of $E_{2r+2}$ gives $\tau(n)$ and its remaining terms give the
convolution in \eqref{eq:C}.
\end{proof}

For real $t$ define
\begin{equation}\label{eq:It}
I_t(f)=\int_0^\infty f(iy)y^t\,dy.
\end{equation}
The cusp condition and modular inversion give convergence for every
real $t$. For a nonnegative integer $t$,
\begin{equation}\label{eq:Mellin}
I_t(f)=\frac{\Gamma(t+1)}{(2\pi)^{t+1}}L(f,t+1),\qquad
\rho_t(f):=\int_0^{i\infty}f(z)z^t\,dz=i^{t+1}I_t(f).
\end{equation}
The $L$-value is understood by analytic continuation where necessary.

\section{Strictly positive determinants of hyperbolic kernels}
We give the elementary determinant facts needed later, including the
strictness that is essential for nonvanishing.

\begin{lemma}[Generalized Vandermonde positivity]\label{lem:V}
Let $0<x_1<\cdots<x_d$ and let $0\le\nu_1<\cdots<\nu_d$ be integers.
Then $\det(x_i^{\nu_j})_{i,j=1}^d>0$.
\end{lemma}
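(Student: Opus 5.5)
The plan is the classical argument by induction on $d$: nonvanishing comes from a root count (Descartes' rule of signs), and the sign comes from a continuity (homotopy) argument. For $d=1$ the determinant is $x_1^{\nu_1}>0$. I will first prove nonvanishing for every $d$, and then fix the sign.

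\textbf{Nonvanishing.} Suppose $\det(x_i^{\nu_j})=0$. Then some nontrivial real combination $P(x)=\sum_{j=1}^d c_jx^{\nu_j}$ vanishes at the $d$ distinct positive points $x_1,\dots,x_d$. I will show, by induction on the number $k$ of nonzero terms, that a nonzero generalized polynomial with $k$ monomials has at most $k-1$ positive roots.
\begin{itemize}
\item Divide by the lowest power $x^{\nu}$ present; this does not change the positive roots.
\item Differentiate. By Rolle's theorem the derivative has at least one fewer positive root than the quotient.
\item The derivative has one fewer monomial, because the constant term dies and the remaining exponents stay distinct and nonzero.
\end{itemize}
Applied with $k\le d$, this gives at most $d-1$ positive roots, a contradiction. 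The exponents being integers is not needed here; distinctness is all that is used.

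\textbf{Sign.} To get positivity rather than mere nonvanishing, deform the exponents continuously. Set $\nu_j(s)=(1-s)(j-1)+s\nu_j$ for $s\in[0,1]$. These stay strictly increasing and real. The root-count argument above works for real distinct exponents, since the derivative of $x^{\mu}$ is $\mu x^{\mu-1}$ with $\mu\ne0$ after the normalization. So along the path the determinant is continuous and never zero. At $s=0$ it is the ordinary Vandermonde determinant $\prod_{i<j}(x_j-x_i)>0$. Hence the determinant is positive at $s=1$.

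\textbf{Main obstacle.} The only step needing care is keeping the Rolle induction valid for real (non-integer) exponents, because the homotopy passes through non-integral values. I will state the root-count claim for real exponents from the start. Under the normalization the derivative is $\sum_{j\ge2}c_j(\mu_j-\mu_1)x^{\mu_j-\mu_1-1}$, whose coefficients are nonzero exactly when the $c_j$ are, so the monomial count drops by exactly one and the induction closes.
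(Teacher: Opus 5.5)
Your proof is correct. The nonvanishing step is the same Rolle root count the paper uses, but you fix the sign by a genuinely different route.

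\textbf{How the paper fixes the sign.} It keeps the exponents fixed and moves the nodes. Since the determinant never vanishes on the connected set $0<x_1<\cdots<x_d$, its sign is constant there. Setting $x_i=x+\eta u_i$ and letting $\eta\downarrow0$, a Taylor expansion shows the leading term is
$\eta^{\binom d2}\prod_{p<q}(u_q-u_p)/\prod_{k=0}^{d-1}k!$
times the Wronskian of $x^{\nu_1},\dots,x^{\nu_d}$. That Wronskian equals $x^{\sum_j\nu_j-\binom d2}\prod_{p<q}(\nu_q-\nu_p)>0$.

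\textbf{How you fix it.} You keep the nodes fixed and deform the exponents linearly to $0,1,\dots,d-1$, so you land on the ordinary Vandermonde determinant.

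\textbf{What each route buys.} Your route avoids the expansion and the Wronskian evaluation, and it proves the lemma for arbitrary distinct real exponents. The price is that the root count must hold for real exponents. As you note, this is harmless, but the inductive hypothesis must explicitly allow negative exponents. After dividing by $x^{\mu_1}$ and differentiating, the new exponents $\mu_j-\mu_1-1$ can be negative once non-integral values occur. The paper's integer setting keeps them nonnegative, which is why it only needs integer exponents there.

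\textbf{Minor point.} Your opening reference to induction on $d$ and the case $d=1$ is never used. The only induction in your argument is on the number of monomials.
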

\begin{proof}
We first prove by induction on the number $m$ of nonzero terms that
\[
F(x)=\sum_{j=1}^{m}c_jx^{\mu_j},\qquad
0\le\mu_1<\cdots<\mu_m,\qquad c_j\in\RR\setminus\{0\},
\]
has at most $m-1$ distinct positive zeros. For $m=1$ it has none.
For $m\ge2$, division by the nowhere-zero function $x^{\mu_1}$
on $(0,\infty)$ preserves its zeros and gives
\[
G(x)=c_1+\sum_{j=2}^{m}c_jx^{\mu_j-\mu_1},\qquad
G'(x)=\sum_{j=2}^{m}c_j(\mu_j-\mu_1)x^{\mu_j-\mu_1-1}.
\]
The derivative has exactly $m-1$ nonzero terms, with distinct
nonnegative integer exponents. By induction it has at most $m-2$
distinct positive zeros. If $G$ has $N\ge1$ distinct positive zeros,
Rolle's theorem, applied between consecutive zeros, gives at least
$N-1$ distinct positive zeros of $G'$. Hence
\[
N-1\le m-2,\qquad N\le m-1.
\]
The case $N=0$ is immediate. The same argument applied to any finite
selection of zeros rules out infinitely many positive zeros. Thus a
nonzero combination of the original $d$ monomials, after discarding
zero coefficients, has at most $m-1\le d-1$ distinct positive zeros.
Only distinct zeros are counted; no assumption of simplicity is made.

If the displayed evaluation matrix were singular, a nonzero real
linear combination of its columns would vanish at all $d$ distinct
points $x_i$, contradicting this bound. Its determinant is therefore
nonzero on the connected region $0<x_1<\cdots<x_d$, so its sign is
constant there.

To determine it, let $x_i=x+\eta u_i$, where $x>0$,
$u_1<\cdots<u_d$ and $\eta\downarrow0$. Taylor expansion shows that the
leading determinant term is
\[
\frac{\eta^{\binom d2}\prod_{p<q}(u_q-u_p)}{\prod_{k=0}^{d-1}k!}
\det\left(\frac{d^{i-1}}{dx^{i-1}}x^{\nu_j}\right)_{i,j=1}^d.
\]
The Wronskian equals
\[
x^{\sum_j\nu_j-\binom d2}\prod_{p<q}(\nu_q-\nu_p)>0.
\]
This determines the sign and proves the lemma.
\end{proof}

\begin{lemma}[Hyperbolic kernels]\label{lem:hyperbolic}
For $0<x_1<\cdots<x_d$ and $0<u_1<\cdots<u_d$,
\begin{equation}\label{eq:hyperbolic}
\det(\cosh(x_i u_j))_{i,j}>0,\qquad
\det(\sinh(x_i u_j))_{i,j}>0.
\end{equation}
For the $\cosh$ determinant one may also allow $x_1=0$ (or, by
symmetry, $u_1=0$), with the other list strictly positive.
Reversing either ordered list changes either determinant by
$(-1)^{\binom d2}$.
\end{lemma}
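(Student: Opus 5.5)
The plan follows the template of Lemma~\ref{lem:V}: first show the determinants never vanish on the connected parameter region, then read off the sign from a single degenerate configuration. The analytic input is a Descartes-type zero count for sums of hyperbolic functions:
\[
F(x)=\sum_{j=1}^m c_j\cosh(u_jx),\qquad G(x)=\sum_{j=1}^m c_j\sinh(u_jx),
\]
with $0\le u_1<\cdots<u_m$, $c_j\neq0$, has at most $m-1$ distinct positive zeros (for the $\sinh$ sum we take $u_1>0$).

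To prove this, write
\[
F(x)=\tfrac12\sum_j c_j\bigl(e^{u_jx}+e^{-u_jx}\bigr),
\]
an exponential sum with frequencies $\pm u_j$. On $\RR$, a nonzero exponential sum with $N$ distinct real frequencies has at most $N-1$ real zeros. This follows from the same Rolle induction as in Lemma~\ref{lem:V}: multiply by $e^{-\lambda_1 x}$ and differentiate. Since $F$ is even, its zeros are symmetric about the origin. When $u_1>0$ there are $2m$ frequencies, so $2Z\le 2m-1$ and hence $Z\le m-1$. When $u_1=0$ there are $2m-1$ frequencies; if $F(0)\ne0$ then $2Z\le 2m-2$, and if $F(0)=0$ then $2Z+1\le 2m-2$. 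Either way $Z\le m-1$.

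For $G$, which is odd, zero is always a root. The $2m$ frequencies give $2Z+1\le 2m-1$, so again $Z\le m-1$. A singular matrix would give a nonzero combination vanishing at $d$ distinct positive points, a contradiction. The case $x_1=0$ for $\cosh$ is handled the same way: a vanishing combination would have $F(0)=0$ together with $d-1$ positive zeros, which the count in the $u_1=0$ or $F(0)=0$ case excludes (the bound $2Z+1\le 2m-1$ with $Z=d-1$, $m\le d$ fails). The statement with $u_1=0$ follows by transposing the matrix.

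Since the determinant is nonzero on the connected region of ordered tuples, its sign is constant there. To fix it, I would scale $x_i=\varepsilon\xi_i$ and let $\varepsilon\downarrow0$. Expand $\cosh(x_iu_j)=\sum_k (x_iu_j)^{2k}/(2k)!$. By Cauchy--Binet, the leading term is
\[
\varepsilon^{2\binom d2}\,\det\bigl(\xi_i^{2(k-1)}\bigr)_{i,k}\,\det\bigl(u_j^{2(k-1)}\bigr)_{k,j}\Big/\prod_{k}(2k-2)!.
\]
Both Vandermonde determinants are in the squares and are positive because the orderings agree; this is Lemma~\ref{lem:V} with exponents $0,2,\dots$. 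The $\sinh$ case is identical with odd exponents $1,3,\dots,2d-1$. I expect the main care point to be checking that this leading Cauchy--Binet term is the one with exponent set $\{0,\dots,d-1\}$ and that it is nonzero.

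Finally, reversing either list reverses the order of the rows or columns. That permutation has sign $(-1)^{\binom d2}$.
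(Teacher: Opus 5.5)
\textbf{The main case.} Your argument for the two strict inequalities (both lists strictly positive) is correct, but it takes a different route from the paper's.

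\begin{itemize}
\item You get nonvanishing from a Rolle-type zero bound for the exponential sums $\sum_j c_j\cosh(u_jx)$ and $\sum_j c_j\sinh(u_jx)$.
\item You then use constancy of sign on a connected region.
\item You fix the sign from the leading Cauchy--Binet term as $x\to0$.
\end{itemize}

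The paper uses Cauchy--Binet globally. For the truncated kernel $K_N(x,u)=\sum_{n\le N}(xu)^{2n}/(2n)!$, every term is a product of two generalized Vandermonde determinants, hence positive by Lemma~\ref{lem:V}. The fixed term $n_\ell=\ell-1$ gives a positive lower bound independent of $N$, so one lets $N\to\infty$.

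The expansion you write in your sign step already has every term positive at every admissible parameter, not only asymptotically. It therefore proves the lemma by itself, and the zero count and connectedness argument become superfluous. It also handles the endpoint at no extra cost: the terms stay nonnegative, and the fixed term is an ordinary Vandermonde in the $x_i^2$ with one zero node.

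\textbf{The endpoint step fails as written.} When $x_1=0$, the other list is strictly positive. So a vanishing combination $F=\sum_{j=1}^m c_j\cosh(u_jx)$ with all $u_j>0$ has $2m$ frequencies, and hence at most $2m-1$ distinct real zeros. The conditions $F(0)=0$ and $F(\pm x_i)=0$ for $2\le i\le d$ supply only $2d-1$ distinct zeros. For $m=d$ this is consistent with $2d-1\le 2m-1$. The bound you say ``fails'' actually holds with equality, and there is no contradiction in the generic case $m=d$. Already for $d=1$ your count says nothing, although the matrix is just $(1)$.

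There are two easy repairs.
\begin{itemize}
\item Count zeros with multiplicity. The Rolle induction bounds zeros counted with multiplicity by the number of frequencies minus one. Since $F$ is even, $F(0)=0$ forces $F'(0)=0$, so $0$ is a zero of order at least $2$. This gives $2d\le 2m-1\le 2d-1$, which is impossible.
\item Alternatively, reverse the direction of your last sentence. Your count with $u_1=0$ ($2m-1$ frequencies against $d$ distinct positive zeros) does give a contradiction, so it handles $u_1=0$ with all $x_i>0$ directly. The case $x_1=0$ then follows because interchanging the roles of the two lists transposes the matrix.
\end{itemize}

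Once nonvanishing at the endpoint is established, strict positivity there follows by continuity as $x_1\downarrow0$.
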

\begin{proof}
For the finite sum $K_N(x,u)=\sum_{n=0}^Nx^{2n}u^{2n}/(2n)!$,
the Cauchy--Binet identity gives
\begin{equation}\label{eq:CBseries}
\det(K_N(x_i,u_j))=
\sum_{0\le n_1<\cdots<n_d\le N}
\frac{\det(x_i^{2n_\ell})_{i,\ell}\det(u_j^{2n_\ell})_{j,\ell}}
{\prod_{\ell=1}^d(2n_\ell)!}.
\end{equation}
Each term is positive by Lemma~\ref{lem:V}. For $N\ge d-1$, the fixed
term $n_\ell=\ell-1$ gives a positive lower bound independent of $N$.
Entrywise convergence and continuity of determinants let $N\to\infty$,
proving strict positivity for $\cosh$. If $x_1=0$, interpret $x^0=1$
also at zero. Every term in \eqref{eq:CBseries} is nonnegative by continuity.
The fixed term $n_\ell=\ell-1$ is strictly positive: its first determinant
is the ordinary Vandermonde in $x_i^2$, with one node zero, and its second
is a positive Vandermonde in $u_j^2$. Thus the same positive lower bound
works at the endpoint. The same argument with powers
$2n+1$ and factorials $(2n+1)!$ proves the assertion for $\sinh$.
Reversing a list is a permutation of sign $(-1)^{\binom d2}$.
\end{proof}

\section{Positive Mellin determinants and full-spark periods}
Fix an even $K\ge12$ with $d=\dim S_K>0$ and use \eqref{eq:kappa}.
Choose $(\alpha,\beta)$ according to
\begin{center}
\begin{tabular}{c*{6}{c}}
\toprule
$\kappa$&0&4&6&8&10&14\\
$(\alpha,\beta)$&(0,0)&(1,0)&(0,1)&(2,0)&(1,1)&(2,1)\\
\bottomrule
\end{tabular}
\end{center}
Define
\begin{equation}\label{eq:Fbasis}
\Phi_m=\Delta^mE_4^{3(d-m)+\alpha}E_6^\beta,\qquad 1\le m\le d.
\end{equation}

\begin{lemma}\label{lem:eval}
The forms $\Phi_m$ are a basis of $S_K(\RR)$ and satisfy
$\Phi_m=q^m+O(q^{m+1})$. If $1<y_1<\cdots<y_d$, then
\begin{equation}\label{eq:evalsign}
(-1)^{\binom d2}\det(\Phi_m(iy_\nu))_{m,\nu}>0.
\end{equation}
\end{lemma}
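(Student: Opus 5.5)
The plan is to reduce \eqref{eq:evalsign} to a Vandermonde determinant in the variable $t=\Delta/E_4^3=1/j$, evaluated along the imaginary axis, and then invoke Lemma~\ref{lem:V}.

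\emph{Basis and leading terms.} First we check the weight. From the table, $4\alpha+6\beta=\kappa$ in every case, so
\[
12m+4\bigl(3(d-m)+\alpha\bigr)+6\beta=12d+\kappa=K
\]
by \eqref{eq:kappa}. Each $\Phi_m$ is divisible by $\Delta$ (since $m\ge1$) and has rational, hence real, Fourier coefficients, so $\Phi_m\in S_K(\RR)$. Because $\Delta=q+O(q^2)$ and $E_4,E_6=1+O(q)$, we get $\Phi_m=q^m+O(q^{m+1})$. The $d$ forms therefore have distinct leading exponents and are linearly independent. Since $\dim_\RR S_K(\RR)=d$ by Lemma~\ref{lem:dim}, they form a basis.

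\emph{Factorization on the imaginary axis.} We write
\[
\Phi_m=E_4^{3d+\alpha}E_6^{\beta}\,t^m,\qquad t=\frac{\Delta}{E_4^3}=\frac1j .
\]
For $z=iy$ all $q$-expansions are real, since $q=e^{-2\pi y}>0$. The function $E_4$ vanishes only on the orbit of $\rho$, where $j=0$. Since $j(iy)$ is real and $\ge1728$ on the imaginary axis (use $y\ge1$ together with $j(i/y)=j(iy)$), $E_4(iy)$ never vanishes. Continuity from $E_4(i\infty)=1$ then gives $E_4(iy)>0$. Similarly, $E_6$ vanishes only on the orbit of $i$, where $j=1728$. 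On the ray $y>1$ the function $j(iy)$ is injective (this ray is a boundary arc of the standard fundamental domain) and tends to $\infty$. Hence $j(iy)$ is strictly increasing on $(1,\infty)$ with values in $(1728,\infty)$, so $E_6(iy)>0$ for $y>1$. Also $\Delta(iy)>0$ by the product formula.

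Consequently, for $1<y_1<\cdots<y_d$, the numbers $t_\nu=1/j(iy_\nu)$ satisfy
\[
t_1>t_2>\cdots>t_d>0 .
\]
Pulling out the positive column factors $E_4^{3d+\alpha}E_6^\beta(iy_\nu)\,t_\nu$ from each column $\nu$ leaves the determinant $\det(t_\nu^{m-1})_{m,\nu}$.

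\emph{Sign.} Reversing the order of the columns puts the nodes in increasing order, and this changes the determinant by $(-1)^{\binom d2}$. Lemma~\ref{lem:V}, applied with $x_i=t_{d+1-i}$ and $\nu_j=j-1$ (a transpose does not affect the determinant), shows the reordered Vandermonde is positive. This gives \eqref{eq:evalsign}.

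The main obstacle is the positivity of $E_6(iy)$ and the strict monotonicity of $j(iy)$ for $y>1$. Both reduce to the standard facts that $j$ is injective on the closure of the fundamental domain modulo boundary identifications, that $j(i)=1728$, and that $j$ is real on the imaginary axis. The hypothesis $y_1>1$ is used exactly here: it avoids the zero of $E_6$ at $i$ and the fold of $j$ at $y=1$.
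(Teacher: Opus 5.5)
Your proof is correct. The determinant reduction matches the paper's up to cosmetics: you write $\Phi_m=E_4^{3d+\alpha}E_6^\beta t^m$ with $t=1/j$ decreasing along the ray, while the paper writes $\Phi_m(iy)=W(y)J(y)^{d-m}$ with $J=j$ increasing. Both end in a reversed Vandermonde carrying the sign $(-1)^{\binom d2}$.

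The genuine difference is how you get $E_6(iy)>0$ and strict monotonicity of $j(iy)$ for $y>1$.
\begin{itemize}
\item The paper works entirely from $q$-expansions and Ramanujan's identities. It uses $E_6(i)=0$ by modularity, $\frac{d}{dy}E_6(iy)=1008\pi\sum n\sigma_5(n)e^{-2\pi ny}>0$, and $J'(y)=2\pi E_4^2E_6/\Delta>0$ from $Dj=-E_4^2E_6/\Delta$.
\item You import the valence formula (the zeros of $E_4$ and $E_6$ lie only on the orbits of $\rho$ and $i$) and the injectivity of $j$ on the fundamental domain. You then use continuity and $j(iy)\to+\infty$ to get monotonicity and the range $(1728,\infty)$.
\end{itemize}
Your route is more conceptual but leans on heavier structural facts. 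The paper's route is self-contained and yields an explicit positive derivative.

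Two minor corrections. First, the ray $\{iy:y>1\}$ lies in the \emph{interior} of the standard fundamental domain $\{|x|\le 1/2,\ |z|\ge 1\}$, not on its boundary. That is exactly why its points are pairwise inequivalent and $j$ is injective there, so your conclusion stands but the parenthetical is wrong. Second, $E_4(iy)>0$ follows at once from $E_4=1+240\sum\sigma_3(n)q^n$ with $q>0$. Your detour through $j(iy)\ge 1728$ is unnecessary, and it forward-references the monotonicity you only prove afterwards.
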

\begin{proof}
The weights in \eqref{eq:Fbasis} add up to $12d+4\alpha+6\beta=K$.
The leading coefficients give independence over $\RR$. Since
Lemma~\ref{lem:dim} gives $\dim_{\RR}S_K(\RR)=d$, these $d$ forms
are a real basis. For $y>1$ set
\[
J(y)=\frac{E_4(iy)^3}{\Delta(iy)},\qquad
W(y)=\Delta(iy)^dE_4(iy)^\alpha E_6(iy)^\beta.
\]
Then $\Phi_m(iy)=W(y)J(y)^{d-m}$. We have $E_4(iy)>0$ from its
Fourier series and $\Delta(iy)>0$ from its product. Modularity gives
$E_6(i)=0$, while
\[
\frac{d}{dy}E_6(iy)=1008\pi\sum_{n\ge1}n\sigma_5(n)e^{-2\pi ny}>0.
\]
Thus $E_6(iy)>0$ for $y>1$ and $W(y)>0$. The differential identity
$Dj=-E_4^2E_6/\Delta$ gives
\[
J'(y)=2\pi\frac{E_4(iy)^2E_6(iy)}{\Delta(iy)}>0.
\]
For completeness, the identity follows by differentiating $j=E_4^3/\Delta$
and using $DE_4=(E_2E_4-E_6)/3$ and $D\Delta=E_2\Delta$.
The ordinary Vandermonde determinant now gives
\begin{align*}
\det(\Phi_m(iy_\nu))_{m,\nu}
={}&(-1)^{\binom d2}\left(\prod_{\nu=1}^dW(y_\nu)\right)
\prod_{\mu<\nu}(J(y_\nu)-J(y_\mu)),
\end{align*}
proving \eqref{eq:evalsign}.
\end{proof}

\begin{theorem}[Positive Mellin determinant]\label{thm:perioddet}
If $t_1>\cdots>t_d>(K-2)/2$ are real and
\[
A=(I_{t_j}(\Phi_m))_{m,j=1}^d,
\]
then $\det A>0$. If $K\equiv0\pmod4$, the endpoint $t_d=(K-2)/2$
is allowed as well. In particular these $d$ Mellin functionals form a basis
of the real dual space, and their complex-linear extensions form a basis
of $S_K^*$.
\end{theorem}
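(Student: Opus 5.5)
The plan is to fold each Mellin integral onto $(1,\infty)$ using modular inversion, rewrite the kernel as a hyperbolic function, and then factor $\det A$ through a continuous Cauchy--Binet formula. This reduces the sign to Lemma~\ref{lem:eval} and Lemma~\ref{lem:hyperbolic}.

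\emph{Folding.} For $f\in S_K(\RR)$ the relation $f(-1/z)=z^Kf(z)$ at $z=iy$ gives $f(i/y)=\varepsilon y^Kf(iy)$ with $\varepsilon=(-1)^{K/2}$. Split $I_t(f)$ at $y=1$ and substitute $y\mapsto 1/y$ on $(0,1)$. This gives
\[
I_t(f)=\int_1^\infty f(iy)\bigl(y^t+\varepsilon y^{K-2-t}\bigr)\,dy .
\]
Put $h=(K-2)/2$, $y=e^v$ and $u=t-h$. The bracket becomes $2y^h\cosh(uv)$ if $\varepsilon=1$ (that is, $K\equiv0\pmod4$) and $2y^h\sinh(uv)$ if $\varepsilon=-1$. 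Hence
\[
A_{m,j}=2\int_0^\infty \Phi_m(ie^v)\,e^{(h+1)v}\,\mathcal K(u_jv)\,dv,
\qquad u_j=t_j-h,
\]
where $\mathcal K$ is $\cosh$ or $\sinh$. The hypotheses give $u_1>\cdots>u_d>0$, and $u_d=0$ is allowed when $K\equiv0\pmod4$, i.e.\ exactly in the $\cosh$ case. The values $\Phi_m(iy)$ are real because the Fourier coefficients are real. The exponential cusp decay of $\Phi_m(ie^v)$ dominates $e^{(h+1+|u_j|)v}$, so all integrals converge absolutely and Fubini applies below.

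\emph{Continuous Cauchy--Binet.} Expanding the determinant multilinearly in the columns and antisymmetrizing over the $d$ integration variables gives
\[
\det A=2^d\int_{0<v_1<\cdots<v_d}\det\bigl(\Phi_m(ie^{v_\nu})\bigr)_{m,\nu}\,
\det\bigl(\mathcal K(v_\nu u_j)\bigr)_{\nu,j}\prod_{\nu}e^{(h+1)v_\nu}\,dv .
\]
On the open simplex the points $y_\nu=e^{v_\nu}$ satisfy $1<y_1<\cdots<y_d$. Lemma~\ref{lem:eval} then shows that the first determinant has sign $(-1)^{\binom d2}$. In the second determinant the $v_\nu$ increase while the $u_j$ decrease. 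Lemma~\ref{lem:hyperbolic}, including its reversal clause and, for $\cosh$, the endpoint clause permitting one zero node, shows that this determinant also has sign $(-1)^{\binom d2}$. The integrand is therefore strictly positive and continuous on a nonempty open set, so $\det A>0$.

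\emph{Conclusion and main obstacle.} By Lemma~\ref{lem:eval} the $\Phi_m$ form a basis of $S_K(\RR)$, so $\det A\neq0$ says the $d$ functionals $I_{t_j}$ are linearly independent on a $d$-dimensional space. They therefore form a basis of its real dual. Since $S_K=S_K(\RR)\otimes\CC$ by Lemma~\ref{lem:dim}, the complex-linear extensions form a basis of $S_K^*$. I expect the main difficulty to be the sign bookkeeping rather than the analysis. One must check that the orderings used in Lemma~\ref{lem:eval} (increasing $y$) and in Lemma~\ref{lem:hyperbolic} (reversed $u$) produce the same factor $(-1)^{\binom d2}$, so that the two signs cancel. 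One must also justify the symmetrized Cauchy--Binet step rigorously through the absolute integrability noted above.
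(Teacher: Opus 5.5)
Your proposal is correct and follows the paper's proof essentially step for step. Both use the same folding by modular inversion into a $\cosh$/$\sinh$ kernel, the same integral Cauchy--Binet factorization over the ordered domain, and the same cancellation of the two factors $(-1)^{\binom d2}$ from Lemma~\ref{lem:eval} and from the reversal and endpoint clauses of Lemma~\ref{lem:hyperbolic}. The differences are cosmetic: you substitute $y=e^v$ explicitly, whereas the paper keeps $y$ and writes the kernel as $\cosh(\lambda_j\log y)$, and you spell out the dual-basis consequence, which the paper treats as immediate.
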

\begin{proof}
Put $\varepsilon=(-1)^{K/2}$. Modular inversion gives
$f(i/y)=\varepsilon y^Kf(iy)$, and a change of variable on $(0,1)$ yields
\begin{equation}\label{eq:fold}
I_t(f)=\int_1^\infty f(iy)
\bigl(y^t+\varepsilon y^{K-2-t}\bigr)\,dy.
\end{equation}
Write $h=(K-2)/2$, $\lambda_j=t_j-h$, and
\begin{equation}\label{eq:H}
H_j(y)=2y^h\begin{cases}
\cosh(\lambda_j\log y),&\varepsilon=1,\\
\sinh(\lambda_j\log y),&\varepsilon=-1.
\end{cases}
\end{equation}
Then $A_{mj}=\int_1^\infty\Phi_m(iy)H_j(y)\,dy$.
Here $\lambda_1>\cdots>\lambda_d>0$, except that $\lambda_d=0$ is
allowed when $\varepsilon=1$. Lemma~\ref{lem:hyperbolic}, including its
$\cosh$ endpoint case, gives
\begin{equation}\label{eq:Hsign}
(-1)^{\binom d2}\det(H_j(y_\nu))_{\nu,j}>0
\quad(1<y_1<\cdots<y_d).
\end{equation}

Expanding both determinants and applying Fubini gives the integral
Cauchy--Binet identity
\begin{equation}\label{eq:Andreief}
\det A=\int_{1<y_1<\cdots<y_d<\infty}
\det(\Phi_m(iy_\nu))_{m,\nu}\,
\det(H_j(y_\nu))_{\nu,j}\,d\mathbf y.
\end{equation}
To check the factor, the integral over the entire product domain equals
$d!\det A$. Its integrand is symmetric in the $y_\nu$, so restriction
to the ordered domain cancels the $d!$. Every term is integrable: the
forms decay exponentially at infinity and the kernels have at most
power growth, and all functions are bounded near $y=1$.

By \eqref{eq:evalsign} and \eqref{eq:Hsign} the integrand in
\eqref{eq:Andreief} is strictly positive on its nonempty open domain.
Consequently $\det A>0$, as asserted.
\end{proof}

\begin{corollary}[Full-spark periods in a half-range]
\label{cor:full-spark-periods}
Put $h=(K-2)/2$. Let
\[
0\le u_1<\cdots<u_m<h,
\qquad 1\le m\le d.
\]
Then $\rho_{u_1},\ldots,\rho_{u_m}$ are linearly independent in $S_K^*$.
If $K\equiv0\pmod4$, the endpoint $u_m=h$ may also be included. In
particular, the assertion has no parity restriction on the indices.
\end{corollary}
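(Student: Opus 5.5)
The plan is to deduce the corollary from Theorem~\ref{thm:perioddet} by reflecting the indices across the center $h$ and then completing to a full family of $d$ functionals. By \eqref{eq:Mellin}, $\rho_u=i^{u+1}I_u$ on $S_K$. The nonzero scalar $i^{u+1}$ does not affect linear independence, so it suffices to show that $I_{u_1},\ldots,I_{u_m}$ are independent in $S_K^*$. The folded formula \eqref{eq:fold}, with $\varepsilon=(-1)^{K/2}$, gives the functional equation
\[
I_{K-2-t}(f)=\int_1^\infty f(iy)\bigl(y^{K-2-t}+\varepsilon y^{t}\bigr)\,dy=\varepsilon I_t(f)
\]
for every real $t$. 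Setting $t_\ell=K-2-u_\ell$ turns $0\le u_1<\cdots<u_m<h$ into $t_1>\cdots>t_m>h$, and $I_{u_\ell}=\varepsilon I_{t_\ell}$. In the endpoint case $u_m=h$ with $K\equiv0\pmod4$, we get $t_m=h$, which is precisely the endpoint that Theorem~\ref{thm:perioddet} allows when $\varepsilon=1$.

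Next I would complete the list. Choose additional reals $t_{m+1}>\cdots>t_d$ lying strictly between $h$ and $t_m$; these need not be integers, since Theorem~\ref{thm:perioddet} is stated for real $t$. In the endpoint case $t_m=h$ there is no room below $t_m$, so instead insert the new values strictly between $t_{m-1}$ and $t_m$, or above $t_1$, and keep $h$ as the smallest value. After reordering so that the list is strictly decreasing, Theorem~\ref{thm:perioddet} gives $\det A>0$. Therefore $I_{t_1},\ldots,I_{t_d}$ is a basis of the real dual of $S_K(\RR)$, and their complex-linear extensions form a basis of $S_K^*$. Any subfamily of a basis is independent, so $I_{t_1},\ldots,I_{t_m}$ are independent, and hence so are the $\rho_{u_\ell}=i^{u_\ell+1}\varepsilon I_{t_\ell}$.

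I expect the only delicate points to be bookkeeping. The first is to confirm that the reflection identity holds for all real $t$ and not only at integers; this is immediate from \eqref{eq:fold}, because the kernel $y^t+\varepsilon y^{K-2-t}$ is sent to $\varepsilon$ times itself under $t\mapsto K-2-t$. The second is to make sure the padding values satisfy the strict inequalities and the endpoint restriction, which the construction above handles. Parity never enters the argument, which is why the statement holds for mixed odd and even indices.
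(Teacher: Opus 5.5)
Your proposal is correct and follows the paper's proof essentially step for step. You reflect $u\mapsto K-2-u$ via $I_u=\varepsilon I_{K-2-u}$, pad to $d$ real parameters admissible for Theorem~\ref{thm:perioddet} (including the $K\equiv0\pmod4$ endpoint), and transfer independence back to the periods through the nonzero factors in \eqref{eq:Mellin}; your explicit placement of the padding values in the endpoint case just makes precise the paper's instruction to ``adjoin further distinct real parameters larger than $h$''.
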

\begin{proof}
Modular inversion gives
\[
I_u=\varepsilon I_{K-2-u},\qquad \varepsilon=(-1)^{K/2},
\]
and hence, for integral $u$,
\begin{equation}\label{eq:period-reflection}
\rho_{K-2-u}=(-1)^{u+1}\rho_u.
\end{equation}
For $u<h$, the complementary index $t=K-2-u$ is strictly larger than $h$;
if $u=h$, then $t=h$ and this endpoint can occur here only in the case
$K\equiv0\pmod4$, exactly the endpoint case allowed in
Theorem~\ref{thm:perioddet}. If $m<d$, adjoin $d-m$ further distinct real
parameters larger than $h$. Theorem~\ref{thm:perioddet} makes the resulting
$d$ Mellin functionals independent, so the chosen $m$ are independent as
well. Equation~\eqref{eq:Mellin} and the nonzero reflection factors in
\eqref{eq:period-reflection} transfer this independence to the periods.
\end{proof}

\begin{corollary}[Mixed odd--even period conjecture]
\label{cor:xue-mixed}
Let
\[
2\le \ell_1<\cdots<\ell_a<K/2-1,
\qquad
2\le \ell'_1<\cdots<\ell'_b\le K/2
\]
be even integers. If $a+b\le d$, then
\[
\rho_{\ell_1},\ldots,\rho_{\ell_a},
\rho_{\ell'_1-1},\ldots,\rho_{\ell'_b-1}
\]
are linearly independent. Thus Conjecture~1.2 of Xue
\cite{XueMixed} holds.
\end{corollary}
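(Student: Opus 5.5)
The plan is to deduce Corollary~\ref{cor:xue-mixed} directly from Corollary~\ref{cor:full-spark-periods}. The work is to check that the listed indices are distinct and lie in the fundamental half-range $0\le u<h=(K-2)/2$, or at the endpoint $u=h$ only when $K\equiv0\pmod4$.

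First the even indices. Each $\ell_i$ is even with $2\le\ell_i<K/2-1=h$, so it lies strictly inside $[0,h)$ and no endpoint issue arises.

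Next the odd indices $u=\ell'_i-1$. Since $\ell'_i$ is even with $2\le\ell'_i\le K/2$, we get $u$ odd with $1\le u\le K/2-1=h$. The strict inequality $u<h$ fails only when $\ell'_b=K/2$, which forces $u=h$. Because $\ell'_b$ is even, this can happen only if $K/2$ is even, that is, $K\equiv0\pmod4$. That is exactly the case in which Corollary~\ref{cor:full-spark-periods} allows the endpoint $u_m=h$. Here $h$ is odd, so no even index can also equal $h$, and the endpoint appears at most once.

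For distinctness, the $\ell_i$ are even and the $\ell'_i-1$ are odd, so the two families are disjoint. Within each family the indices are strictly increasing. Sorting the union therefore gives $m=a+b\le d$ distinct indices $0\le u_1<\cdots<u_m\le h$, with $u_m=h$ possible only when $K\equiv0\pmod4$.

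Corollary~\ref{cor:full-spark-periods} now gives linear independence of the $\rho_{u_k}$, which is the asserted independence. To identify this with Xue's Conjecture~1.2, one also has to check that the index ranges and cardinality bound stated here match those in \cite{XueMixed}. That bookkeeping, together with the endpoint analysis above, is the only potential obstacle; the analytic content is entirely contained in Theorem~\ref{thm:perioddet}.
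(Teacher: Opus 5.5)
Your proposal is correct and matches the paper's proof. Both arguments use the same steps: parity makes the indices distinct, and every index lies strictly below $h=(K-2)/2$ except possibly $\ell'_b-1=h$. That exception forces $K\equiv0\pmod4$, which is exactly the endpoint case allowed in Corollary~\ref{cor:full-spark-periods}.
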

\begin{proof}
The displayed indices are distinct because the first group is even and the
second is odd. They all lie strictly below $h=(K-2)/2$, except possibly
$\ell'_b-1=h$. The latter can occur only when $\ell'_b=K/2$ is even, hence
$K\equiv0\pmod4$; Corollary~\ref{cor:full-spark-periods} allows exactly this
endpoint. The result follows.
\end{proof}

\begin{corollary}[Even-period bases]\label{cor:periods}
If $4\le b_1<\cdots<b_d<(K-2)/2$ are even, then
$\rho_{b_1},\ldots,\rho_{b_d}$ form a basis of $S_K^*$.
In particular, $\rho_4,\rho_6,\ldots,\rho_{2d+2}$ form such a basis.
\end{corollary}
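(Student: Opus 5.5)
This is a direct specialization of Corollary~\ref{cor:full-spark-periods}, taking $m=d$ and the indices $u_\ell=b_\ell$.

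The first step is to check that the hypotheses are met. The indices $b_1<\cdots<b_d$ are distinct nonnegative integers. The strict inequality $b_d<(K-2)/2=h$ puts all of them strictly inside the half-range, so the endpoint case never arises and no condition on $K\bmod 4$ is needed. The corollary therefore makes $\rho_{b_1},\ldots,\rho_{b_d}$ linearly independent in $S_K^*$.

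Next, $S_K^*$ has complex dimension $d$, so $d$ independent functionals form a basis. Equivalently, one can return to Theorem~\ref{thm:perioddet}. Via the reflection \eqref{eq:period-reflection}, the parameters $t_j=K-2-b_j$, listed in decreasing order, exceed $h$. The matrix of Mellin values is then nonsingular, and the complex-linear extensions give a basis. The nonzero scalar factors $i^{t+1}$ and $(-1)^{u+1}$ in \eqref{eq:Mellin} and \eqref{eq:period-reflection} do not affect independence.

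For the particular family $b_\ell=2\ell+2$, $1\le\ell\le d$, the only point needing work is the inequality $2d+2<(K-2)/2$, that is, $K>4d+6$. I would get this from \eqref{eq:kappa}: $K=12d+\kappa\ge12d$, and $12d>4d+6$ holds for every $d\ge1$. The smallest index is $b_1=4\ge4$, as required.

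This inequality is the only step with any content, and it is immediate from $K\ge12d$. So there is no genuine obstacle; the proof is a bookkeeping reduction to the earlier corollary.
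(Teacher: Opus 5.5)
Your proposal is correct and follows the paper's proof. The general case is Corollary~\ref{cor:full-spark-periods} with $m=d$ together with $\dim S_K^*=d$, and the consecutive case reduces to $K-4d-6=8d+\kappa-6>0$, which is the same inequality as your bound $K\ge 12d>4d+6$.
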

\begin{proof}
The first assertion is Corollary~\ref{cor:full-spark-periods} with $m=d$.
For the consecutive choice,
$K-2-2(2d+2)=8d+\kappa-6>0$, so $2d+2<(K-2)/2$.
\end{proof}

\begin{remark}[Period polynomials]
For $P_f(X)=\int_0^{i\infty}f(z)(X-z)^{K-2}\,dz$, its even part is
\[
P_f^+(X)=\sum_{\substack{0\le t\le K-2\\t\ \mathrm{even}}}
\binom{K-2}{t}\rho_t(f)X^{K-2-t}.
\]
Corollary~\ref{cor:periods} says that the coefficients corresponding to
$t=4,6,\ldots,2d+2$ determine $f$. Corollary~\ref{cor:full-spark-periods}
is stronger: it permits arbitrary mixtures of odd and even period indices in
the fundamental half-range, subject only to the dimension bound. These are
statements about linear functionals on $S_K$, not algebraic independence of
numerical period values of an individual form.
\end{remark}

\section{The published period pairing and its sign}
We use the period--bracket correspondence of Lei--Ni--Xue
\cite[Propositions 1.3 and 4.1]{LNX}. Let $\mathcal B_K$ be a basis
of normalized Hecke eigenforms in $S_K$. Their Fourier coefficients
are real, and the forms are mutually orthogonal. Normalized here means
that the coefficient of $q$ is $1$; the product of two periods below
is asserted only for such eigenforms.

\begin{lemma}[Signed period pairing]\label{lem:Rankin}
Let $a>b\ge4$ be even, $a+b+2=K$, and $g\in\mathcal B_K$. Then
\begin{equation}\label{eq:RankinI}
\langle g,[E_a,E_b]_1\rangle
=c_{a,b}L(g,K-2)I_a(g),
\end{equation}
where $c_{a,b}$ is a nonzero real number independent of $g$, and
\begin{equation}\label{eq:c}
\sgn c_{a,b}=(-1)^{b/2}.
\end{equation}
\end{lemma}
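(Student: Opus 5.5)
The plan is to derive \eqref{eq:RankinI} from the Rankin--Selberg unfolding behind \cite[Proposition~1.3]{LNX}, then track every constant so that the sign of $c_{a,b}$ can be read off. Only the sign in \eqref{eq:c} is new; the pairing itself is the published one.

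\emph{Step 1: unfolding.} Write $[E_a,E_b]_1=aE_aDE_b-bE_bDE_a$. Replace $E_b$ by the weight-$b$ Eisenstein series written as a sum over $\Gamma_\infty\backslash\SL$, and use the standard fact that the first bracket with an Eisenstein series is the holomorphic projection of a Shimura--Maass raised product. Unfolding $\langle g,[E_a,E_b]_1\rangle$ against the strip $\Gamma_\infty\backslash\HH$ then gives a Dirichlet series $\sum_n \tau_g(n)\sigma_{a-1}(n)n^{-s}$ at a specific point $s$, multiplied by a Gamma factor $\Gamma(K-1)/(4\pi)^{K-1}$. I do not use the absolute convergence of this series at the relevant point. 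Instead I take $s$ in the region of absolute convergence, use the Ramanujan factorization
\[
\sum_n\tau_g(n)\sigma_{a-1}(n)n^{-s}=\frac{L(g,s)L(g,s-a+1)}{\zeta(2s-K-a+2)},
\]
and continue analytically. Evaluated at the unfolding point, this produces $L(g,K-1)L(g,K-a)$ divided by a $\zeta$-value.

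\emph{Step 2: reduction to the stated form.} I would apply the functional equation
\[
(2\pi)^{-s}\Gamma(s)L(g,s)=(-1)^{K/2}(2\pi)^{-(K-s)}\Gamma(K-s)L(g,K-s).
\]
This converts the two $L$-values into $L(g,K-2)$ and $L(g,a+1)$, where the latter equals $(2\pi)^{a+1}I_a(g)/\Gamma(a+1)$ by \eqref{eq:Mellin}. Equivalently, one can quote \cite[Proposition~1.3]{LNX} directly in this shape and only verify the normalization. The result is $c_{a,b}$ written as an explicit product of three kinds of factor: Gamma values and powers of $2\pi$, which are all positive; the Eisenstein normalizations $e_a=-2a/B_a$ and $e_b=-2b/B_b$ from \eqref{eq:E}; and a $\zeta$-value at an even positive integer. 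Independence of $g$ is then visible, because the normalized eigenform enters only through its $L$-values.

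\emph{Step 3: the sign.} The key inputs are $\sgn B_k=(-1)^{k/2+1}$ for even $k\ge2$, the functional-equation root number $(-1)^{K/2}$, and $\zeta(2m)>0$. There are also integer factors coming from the bracket orientation in \eqref{eq:RCintro}, namely the sign of the combination of $a$ and $b$ (such as $a-b$ or $b$) produced by the raising operator; these are positive since $a>b$. Using $K=a+b+2$, the exponents $a/2$ cancel modulo $2$, leaving $(-1)^{b/2}$. I expect this bookkeeping to be the main obstacle. The sign is sensitive to the bracket convention (a global sign flip by $-1$), to whether the unfolded Eisenstein series is $E_a$ or $E_b$, and to the conversion $\rho_t=i^{t+1}I_t$. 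I would therefore cross-check the final sign numerically in the smallest case $K=16$, $a=8$, $b=6$. There $d=1$, $g=\Delta E_4$, and the sign of $[q^1][E_8,E_6]_1=8e_6-6e_8$ can be compared with the positivity of $\langle g,g\rangle$ and of $L(g,14)$, together with the sign of $I_8(g)$.
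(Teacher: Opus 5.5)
There is a genuine gap, and it sits in the part that carries the content of the lemma.

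\textbf{Step 1 uses the wrong point.} You evaluate the unfolded series at the wrong value of $s$. Unfolding $E_b$ rests on the identity $[E_a,E_b]_1=-b\sum_{\gamma\in\Gamma_\infty\backslash\SL}(DE_a)|_K\gamma$. This follows by differentiating $E_b$ termwise and using the weight-$k$ raising operator $D-k/(4\pi y)$. The factor $n$ coming from $D$ puts the Dirichlet series at $s=K-2$, which gives $L(g,K-2)L(g,b+1)/\zeta(b)$.

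Your value $L(g,K-1)L(g,K-a)$ is what the plain product $E_aE_b$ would give. The functional equation $s\mapsto K-s$ sends it to $L(g,1)L(g,a)$, not to $L(g,K-2)L(g,a+1)$, so Step~2 does not follow from Step~1. There is also a convergence problem in this direction: the series diverges at $s=K-2$, since its terms have size about $n^{(a-b-1)/2}$. Continuing the Dirichlet series alone does not prove that the Petersson product equals the continued value; for that you would need a parameter inside the Eisenstein series itself.

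\textbf{Step 3 does not determine the sign.} Its one concrete assertion is wrong: the integer factor produced by the bracket is $-b<0$, not a positive quantity. Take the ingredients you list: $\sgn e_a=(-1)^{a/2}$, root number $(-1)^{K/2}=(-1)^{a/2+b/2+1}$, $\zeta(b)>0$, and positive integers. The $a/2$'s do cancel, but they leave $(-1)^{b/2+1}$, the opposite of \eqref{eq:c}. The missing $-1$ is exactly that $-b$.

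Your $K=16$ test would expose the discrepancy. There $[q]\,[E_8,E_6]_1=8e_6-6e_8=-6912$, while $L(g,14)>0$ and $I_8(\Delta E_4)>0$ by \eqref{eq:fold}, so $c_{8,6}<0$. But one numerical case cannot replace the derivation.

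\textbf{A clean repair.} Unfold $E_a$ instead, using $[E_a,E_b]_1=-[E_b,E_a]_1=a\sum_\gamma(DE_b)|_K\gamma$. The resulting series $\sum_na_g(n)\sigma_{b-1}(n)n^{2-K}=L(g,K-2)L(g,a+1)/\zeta(a)$ converges absolutely because $a-b\ge2$. This gives
\[
c_{a,b}=\frac{a\,e_b\,\Gamma(K-1)\,(2\pi)^{a+1}}{(4\pi)^{K-1}\,\zeta(a)\,a!},
\qquad \sgn c_{a,b}=\sgn e_b=(-1)^{b/2},
\]
with no analytic continuation and no functional equation needed. This agrees with \eqref{eq:cexact}.

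The paper takes the route you mention only as a fallback. It quotes \cite[Proposition~1.3]{LNX}, which is stated with $\rho_{K-3}(g)\rho_b(g)$ rather than in the shape \eqref{eq:RankinI}. It then converts using $\rho_b=-\rho_a$, $\rho_{K-3}\propto i^{K-2}L(g,K-2)$ and $\rho_t=i^{t+1}I_t$, tracking the phase $(-1)^{a/2}$ explicitly.
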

\begin{proof}
The Eisenstein series and normalized derivatives in \cite{LNX} agree with
\eqref{eq:E} and \eqref{eq:RCintro}. Their period convention is also
exactly ours: if $r_t(g)$ denotes the period in \cite{LNX}, then
\[
r_t(g)=\int_0^{i\infty}g(z)z^t\,dz
=\frac{t!}{(-2\pi i)^{t+1}}L(g,t+1)
=i^{t+1}\frac{\Gamma(t+1)}{(2\pi)^{t+1}}L(g,t+1)
=\rho_t(g).
\]
Thus no additional power of $2\pi$ or $i$ is hidden in the passage from
\cite[Proposition~1.3]{LNX} to our notation. With bracket order $1$, that
proposition gives
\begin{equation}\label{eq:LNXperiod}
\begin{split}
\langle g,[E_a,E_b]_1\rangle
={}&(-i)^{a+1}\frac{4ab}{B_aB_b}
\frac{(-1)^{1+b/2}\Gamma(K-1)}{2^{K-1}\Gamma(K-2)}\\
&\hspace{12mm}\times\rho_{K-3}(g)\rho_b(g).
\end{split}
\end{equation}
Use $\rho_b(g)=-\rho_a(g)$ and
\[
\rho_{K-3}(g)=i^{K-2}
\frac{\Gamma(K-2)}{(2\pi)^{K-2}}L(g,K-2),
\qquad \rho_a(g)=i^{a+1}I_a(g).
\]
The resulting phase is
\[
-(-1)^{1+b/2}(-i)^{a+1}i^{K-2}i^{a+1}
=(-1)^{a/2}.
\]
Consequently the real factor in \eqref{eq:RankinI} is
\begin{equation}\label{eq:cexact}
c_{a,b}=(-1)^{a/2}
\frac{4ab\,\Gamma(K-1)}
{B_aB_b\,2^{K-1}(2\pi)^{K-2}}.
\end{equation}
It is nonzero. Since
$\sgn(B_aB_b)=(-1)^{a/2+b/2}$, its sign is $(-1)^{b/2}$.
This is the only information about its magnitude needed in Section~6.
\end{proof}

\begin{lemma}\label{lem:positiveL}
For every $g\in\mathcal B_K$, $L(g,K-2)>0$.
\end{lemma}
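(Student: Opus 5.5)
The plan is to show that $L(g,K-2)$ is a value of the $L$-function inside the region of absolute convergence, or at its edge, and to bound it below there by the Euler product. For a normalized Hecke eigenform $g\in\mathcal B_K$ the coefficients $a_g(n)$ are real and multiplicative. Deligne's bound $|a_g(p)|\le 2p^{(K-1)/2}$ shows that the Euler product
\[
L(g,s)=\prod_p\bigl(1-a_g(p)p^{-s}+p^{K-1-2s}\bigr)^{-1}
\]
converges absolutely for $\operatorname{Re}s>(K+1)/2$. The point $s=K-2$ satisfies $K-2>(K+1)/2$ exactly when $K>5$, and this holds since $K\ge12$. Each Euler factor is then positive. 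Indeed, writing $1-a_g(p)X+p^{K-1}X^2=(1-\alpha_pX)(1-\bar\alpha_pX)$ with $|\alpha_p|=p^{(K-1)/2}$, we have at real $X=p^{-s}$ that $(1-\alpha_pX)(1-\bar\alpha_pX)=|1-\alpha_pX|^2$. This is strictly positive because $|\alpha_pX|=p^{(K-1)/2-s}<1$.

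Next I would pass from the factors to the product. An absolutely convergent infinite product of positive reals whose terms tend to $1$ has a positive limit. This is because $\sum_p\log$ of the factors converges absolutely, since $|\log(1-\alpha_pp^{-s})|\ll p^{(K-1)/2-s}$ and $(K-1)/2-s<-1$. Exponentiating the convergent sum gives a strictly positive value, so $L(g,K-2)>0$.

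Deligne's theorem can be avoided if desired. The Hecke bound $a_g(n)\ll n^{K/2}$ gives absolute convergence only for $\operatorname{Re}s>K/2+1$, and $K-2>K/2+1$ requires $K>6$, which also holds. In that route the factorization $\alpha_p,\bar\alpha_p$ still exists, but the roots need not be complex conjugates of modulus $p^{(K-1)/2}$. I would instead argue directly that each factor $1-a_g(p)p^{-s}+p^{K-1-2s}$ is nonzero on the whole half-line $s\ge K-2$ by the absolute convergence. Each factor tends to $1$ as $s\to\infty$, so by continuity it is positive throughout; the same then holds for the product. The only delicate step is this uniform control of the Euler factors; citing Deligne makes it immediate, and I would present that version with the Hecke-bound variant as a remark.
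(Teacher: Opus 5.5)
Your proposal is correct. Your main route differs from the paper's because it relies on Deligne's theorem. With $|a_g(p)|\le 2p^{(K-1)/2}$ the quadratic Euler polynomial has conjugate (or equal real) roots of modulus $p^{(K-1)/2}$, so each factor is $|1-\alpha_pp^{-s}|^2>0$ immediately, and absolute convergence holds for $s>(K+1)/2$.

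The paper deliberately avoids this input and uses only the elementary bound $a_g(n)=O_g(n^{K/2})$. With that bound the local series $\sum_m a_g(p^m)X^m$ converges for $|X|<p^{-K/2}$ and equals the reciprocal of $1-a_g(p)X+p^{K-1}X^2$, so that polynomial has no zero in the disk. Being real and equal to $1$ at $X=0$, it is positive on $(0,p^{-K/2})$, i.e.\ for every real $s>K/2$. The Euler product converges absolutely for $s>K/2+1$, a range containing $K-2$ since $K\ge12$.

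This is exactly your fallback ``Hecke-bound'' variant, with continuity in $X$ from $X=0$ in place of your $s\to\infty$, so that remark is the paper's proof. When you write it out, say explicitly that nonvanishing of each factor comes from the identity between the convergent local series and the reciprocal polynomial. Absolute convergence of the full product by itself does not rule out a single vanishing factor.

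The Deligne version gives a one-line positivity argument and a slightly larger half-plane, but it depends on a very deep theorem. The paper's version is self-contained, which is why it remarks that no bound as strong as Deligne's is needed.
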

\begin{proof}
It suffices to use the elementary bound $a_g(n)=O_g(n^{K/2})$ for
cusp-form coefficients. Indeed $y^{K/2}|g(z)|$ is bounded, and the
Fourier coefficient integral at $y=1/n$ gives this bound. In particular,
for a fixed prime $p$ the local series
\[
\sum_{m\ge0}a_g(p^m)X^m
=\frac{1}{1-a_g(p)X+p^{K-1}X^2}
\]
converges for $|X|<p^{-K/2}$. Hence its denominator has no zero in that
open disk. If $s>K/2$, then $X=p^{-s}$ lies in the interval
$(0,p^{-K/2})$; the real-valued denominator is therefore nonzero there
and, being equal to $1$ at $X=0$, is strictly positive throughout that
interval. Thus every local Euler factor at such a real $s$ is positive.

For $s>K/2+1$ the Euler product is absolutely convergent. Indeed
$a_g(p)=O_g(p^{K/2})$, so
$\sum_p|a_g(p)|p^{-s}<\infty$, while
$\sum_pp^{K-1-2s}<\infty$. Equivalently, the deviations of the local
factors from $1$ are absolutely summable; the product therefore converges
to a finite nonzero number. Since each factor is positive, the product is
strictly positive. Finally $K-2>K/2+1$ for $K\ge12$, so
$L(g,K-2)>0$. In particular no bound as strong as Deligne's is needed.
\end{proof}

Define a real linear operator on $S_K(\RR)$ by
\begin{equation}\label{eq:T}
\Tcal g=L(g,K-2)g\qquad(g\in\mathcal B_K).
\end{equation}
Lemma~\ref{lem:positiveL} implies
\begin{equation}\label{eq:detT}
\det\Tcal=\prod_{g\in\mathcal B_K}L(g,K-2)>0.
\end{equation}
Linearity of \eqref{eq:RankinI} in $g$ gives, for every
$f\in S_K(\RR)$,
\begin{equation}\label{eq:allfRankin}
\langle f,[E_a,E_b]_1\rangle=c_{a,b}I_a(\Tcal f).
\end{equation}
This step keeps the eigenform-dependent factor $L(g,K-2)$ inside a
single common invertible operator; it is not pulled out as a scalar
independent of the eigenform.

\section{Signed basis determinants in arbitrary weight}
\begin{theorem}[Arbitrary admissible families]\label{thm:general}
Let $K\ge12$ be even and $d=\dim S_K>0$. Choose even integers
\begin{equation}\label{eq:admissible}
4\le b_1<\cdots<b_d<\frac{K-2}{2},\qquad a_j=K-2-b_j.
\end{equation}
Then the forms $[E_{a_j},E_{b_j}]_1$ form a basis of $S_K$, and
\begin{equation}\label{eq:generalSign}
(-1)^{\sum_{j=1}^db_j/2}
\det\bigl([q^n][E_{a_j},E_{b_j}]_1\bigr)_{n,j=1}^d>0.
\end{equation}
\end{theorem}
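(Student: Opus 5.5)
The plan is to compute the sign of the coefficient determinant by pairing the brackets $R_j=[E_{a_j},E_{b_j}]_1$ against the triangular basis $\Phi_1,\ldots,\Phi_d$ of Lemma~\ref{lem:eval}. Since the Petersson product is positive definite on $S_K(\RR)$, the sign of the resulting Gram-type matrix will equal the sign of the coordinate determinant. The signed pairing \eqref{eq:allfRankin} then turns that Gram-type matrix into a Mellin determinant, whose sign is fixed by Theorem~\ref{thm:perioddet}.

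\emph{Step 1: coefficients versus coordinates.} Each $R_j$ has rational Fourier coefficients and lies in $S_K(\RR)$. Write $R_j=\sum_k X_{kj}\Phi_k$ with $X$ real. Since $\Phi_k=q^k+O(q^{k+1})$, the matrix $U_{nk}=[q^n]\Phi_k$ ($1\le n,k\le d$) is lower unitriangular. The coefficient matrix in \eqref{eq:generalSign} equals $UX$, so its determinant equals $\det X$.

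\emph{Step 2: passing to the Petersson pairing.} Put $G_{mj}=\langle\Phi_m,R_j\rangle$. Then $G=PX$, where $P=(\langle\Phi_m,\Phi_k\rangle)$ is the Gram matrix of a real basis. Because $P$ is positive definite, $\det P>0$, and so $\sgn\det X=\sgn\det G$.

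\emph{Step 3: reduction to a Mellin determinant.} By \eqref{eq:allfRankin}, $G_{mj}=c_{a_j,b_j}I_{a_j}(\Tcal\Phi_m)$. Write $\Tcal\Phi_m=\sum_k\theta_{km}\Phi_k$. Then $G=\theta^{\mathsf T}A\,\diag(c_{a_j,b_j})$, where $A_{kj}=I_{a_j}(\Phi_k)$. We have $\det\theta=\det\Tcal>0$ by \eqref{eq:detT}. The hypothesis $b_j<(K-2)/2$ gives $a_1>\cdots>a_d>(K-2)/2$ strictly, so Theorem~\ref{thm:perioddet} applies with $t_j=a_j$ in the required decreasing order and yields $\det A>0$; no endpoint case is needed. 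Finally \eqref{eq:c} gives $\sgn\prod_j c_{a_j,b_j}=(-1)^{\sum_j b_j/2}$. Combining Steps 1--3,
\[
\sgn\det\bigl([q^n]R_j\bigr)=\sgn\det G=(-1)^{\sum_j b_j/2},
\]
which is \eqref{eq:generalSign}. In particular the determinant is nonzero, so the $R_j$ are independent. Since there are $d=\dim S_K$ of them, they form a basis of $S_K$, using Lemma~\ref{lem:dim} for the passage from $\RR$ to $\CC$.

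The main obstacle is sign bookkeeping rather than any analytic input. One must check that the ordering of the $a_j$ matches the decreasing order demanded in Theorem~\ref{thm:perioddet}, and that the transposes and the column scaling by $c_{a_j,b_j}$ introduce no hidden permutation signs. One must also confirm that Lemma~\ref{lem:Rankin} applies, which needs $a_j>b_j\ge4$, and this follows from \eqref{eq:admissible}. Linearity of the pairing in the first variable, together with the realness of everything involved, ensures that no complex conjugation alters these signs.
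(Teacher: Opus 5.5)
Your proposal is correct and is essentially the paper's proof. In both, the Gram matrix of the $\Phi_m$ times the coordinate matrix of the brackets is factored as the transposed matrix of $\Tcal$ times the Mellin matrix $(I_{a_j}(\Phi_m))$ times $\diag(c_{a_j,b_j})$; the sign then comes from $\det\Tcal>0$, Theorem~\ref{thm:perioddet}, and \eqref{eq:c}, and the unitriangular Fourier matrix of the $\Phi_m$ carries it to the coefficient determinant. The only cosmetic difference is that you get the basis claim solely from the nonvanishing determinant, while the paper additionally cites Corollary~\ref{cor:periods} with \cite[Proposition 4.1]{LNX}.
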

\begin{proof}
The basis assertion follows already from Corollary~\ref{cor:periods}
and \cite[Proposition 4.1]{LNX}. To determine the orientation, we use
Lemma~\ref{lem:Rankin} and retain the signed column factors.
Let $X$ be the real coordinate matrix in the basis \eqref{eq:Fbasis},
so that $[E_{a_j},E_{b_j}]_1=\sum_mX_{mj}\Phi_m$. Let
\[
G=(\langle\Phi_m,\Phi_n\rangle)_{m,n},\quad
A=(I_{a_j}(\Phi_m))_{m,j},\quad C=\diag(c_{a_j,b_j}),
\]
and let $T$ be the matrix of $\Tcal$ with columns representing the
images of the basis vectors. Equation~\eqref{eq:allfRankin} gives
\begin{equation}\label{eq:matrixid}
GX=T^{\mathsf T}AC.
\end{equation}
All matrices are real; this avoids any conjugate-linearity ambiguity
in using the Petersson product. The Gram determinant satisfies
$\det G>0$, and \eqref{eq:detT} gives $\det T>0$.
Moreover $a_1>\cdots>a_d>(K-2)/2$, so
Theorem~\ref{thm:perioddet} gives $\det A>0$. Therefore
\begin{equation}\label{eq:detX}
\det X=\frac{\det T\det A}{\det G}\prod_{j=1}^dc_{a_j,b_j}\ne0.
\end{equation}
Its sign is $(-1)^{\sum_jb_j/2}$ by \eqref{eq:c}.

The first-$d$ Fourier-coefficient matrix of $\Phi_1,\ldots,\Phi_d$ is
unit lower triangular, since $\Phi_m=q^m+O(q^{m+1})$.
Calling it $Q$, the matrix in \eqref{eq:generalSign} is $QX$ and
$\det Q=1$. Thus its determinant has the same nonzero sign as
$\det X$, proving both claims.
\end{proof}

\begin{proof}[Proof of Theorem~\ref{thm:mainbasis}]
Use $b_j=2j+2$, $a_j=K-2-b_j$. By \eqref{eq:kappa},
\[
a_d-b_d=K-4d-6=8d+\kappa-6>0.
\]
Thus all indices are admissible. Finally
\[
\sum_{j=1}^d\frac{b_j}{2}=\frac{d(d+3)}2
=\binom d2+2d,
\]
so Theorem~\ref{thm:general} gives \eqref{eq:mainsign}.
\end{proof}

\begin{corollary}\label{cor:subfamilies}
Any admissible family of $s\le d$ first brackets, with distinct even
$b$-indices in $[4,(K-2)/2)$, is linearly independent. The corresponding
even periods are linearly independent as well.
\end{corollary}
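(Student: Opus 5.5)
The plan is to reduce the subfamily statement to the full-family Theorem~\ref{thm:general}, first enlarging the family to $d$ members and then restricting. Fix distinct even $b$-indices $4\le b_1<\cdots<b_s<(K-2)/2$ with $s\le d$, and put $a_j=K-2-b_j$.

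The first step is to check that there are enough even integers in $[4,(K-2)/2)$ to complete the family. By \eqref{eq:kappa}, the proof of Corollary~\ref{cor:periods} shows that $2d+2<(K-2)/2$, so the $d$ even integers $4,6,\ldots,2d+2$ all lie in this interval. The interval therefore contains at least $d$ even integers. Hence we can adjoin $d-s$ further distinct even integers from it to the given $b_j$. After reordering, we obtain an admissible family in the sense of \eqref{eq:admissible}.

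Next we apply Theorem~\ref{thm:general} to the completed family. It says that the $d$ brackets $[E_{a_j},E_{b_j}]_1$ form a basis of $S_K$. Any subset of a basis is linearly independent, and this includes our original $s$ brackets. The ordering of the family does not matter here, since independence is unaffected by permuting the members.

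For the periods, we apply Corollary~\ref{cor:full-spark-periods} directly. The indices $b_1<\cdots<b_s$ satisfy $0\le b_1$ and $b_s<h=(K-2)/2$, and $s\le d$, so $\rho_{b_1},\ldots,\rho_{b_s}$ are linearly independent. Alternatively, one can apply Corollary~\ref{cor:periods} to the completed family and restrict to a subset.

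We do not expect a real obstacle. The only point that needs attention is the counting in the first step, namely that the half-range contains at least $d$ even integers $\ge4$. This is exactly the inequality $8d+\kappa-6>0$ already used in the proofs of Corollary~\ref{cor:periods} and Theorem~\ref{thm:mainbasis}.
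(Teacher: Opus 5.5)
Your proposal is correct and matches the paper's argument: enlarge the given $b$-indices to $d$ admissible ones (possible because $8d+\kappa-6>0$), apply Theorem~\ref{thm:general}, and restrict to the subfamily. For the periods, the paper applies Corollary~\ref{cor:periods} to the enlarged family, which is your stated alternative; your direct appeal to Corollary~\ref{cor:full-spark-periods} is equally valid.
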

\begin{proof}
The consecutive choice in Theorem~\ref{thm:mainbasis}'s proof shows
that the admissible index set has at least $d$ elements for every $K$
under consideration. Enlarge the chosen set to $d$ admissible indices
and apply Theorem~\ref{thm:general} and Corollary~\ref{cor:periods}.
\end{proof}

\begin{remark}
Corollary~\ref{cor:subfamilies} proves the arbitrary-family assertion
stated as \cite[Conjecture 4.2]{NSSXY}. In contrast to a spanning
argument, \eqref{eq:detX} supplies a nonzero determinant with a specified
orientation. The proof is special to the level-one structure used in
\eqref{eq:Fbasis}; no higher-level extension is asserted here.
\end{remark}

\section{Exact Fourier coordinates and the normalized system}
We now return to $K=2r+14$ and the consecutive indices \eqref{eq:ab}.
For $f=\sum_{u\ge0}A(u)q^u$ and $g=\sum_{v\ge0}B(v)q^v$ of weights
$a,b$, coefficient multiplication gives
\[
[q^n][f,g]_1=\sum_{u=0}^n\bigl(an-(a+b)u\bigr)A(u)B(n-u).
\]
For Eisenstein series it becomes
\begin{align}
[q^n][E_a,E_b]_1={}&ae_b\,n\sigma_{b-1}(n)-be_a\,n\sigma_{a-1}(n)
\notag\\
&+e_ae_b\sum_{m=1}^{n-1}(an-(a+b)m)
\sigma_{a-1}(m)\sigma_{b-1}(n-m).
\label{eq:RCcoeff}
\end{align}
Consequently
\begin{equation}\label{eq:hnorm}
h_{r,j}(n)=\frac{B_{a_j}B_{b_j}}{4a_jb_j}[q^n]R_j.
\end{equation}
This proves directly that \eqref{eq:h} is a rational, explicit divisor-sum
formula for every entry of $N_r$.

\begin{proposition}[Unique rational coordinates]\label{prop:coordinates}
There exist unique rational coefficients $\lambda_j(r)$ such that
\begin{equation}\label{eq:Frbasis}
E_{2r+2}\Delta=\sum_{j=1}^d\lambda_j(r)R_j.
\end{equation}
If $v_r=([q^n](E_{2r+2}\Delta))_{n=1}^d$, then
\begin{equation}\label{eq:lambda}
\lambda=M_r^{-1}v_r,\qquad
\lambda_j=\frac{\det M_r[j\leftarrow v_r]}{\det M_r}.
\end{equation}
Writing
\begin{equation}\label{eq:Zlambda}
Z_j(r)=\frac{4a_jb_jB_{2r+2}}{B_{a_j}B_{b_j}}\lambda_j(r),
\end{equation}
one has $N_rZ=z_r$, hence \eqref{eq:Zcramer}. Also
\begin{equation}\label{eq:Nsign}
\sgn\det N_r=(-1)^{\binom d2+d(K/2-1)}.
\end{equation}
\end{proposition}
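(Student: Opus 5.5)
We prove the proposition directly from Theorem~\ref{thm:mainbasis}, by relating $N_r$ to $M_r$ through a diagonal change of scale.

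\emph{Existence and uniqueness of $\lambda_j(r)$.} Since $E_{2r+2}\Delta\in S_K$, Theorem~\ref{thm:mainbasis} gives a unique complex expansion \eqref{eq:Frbasis}. Compare the first $d$ Fourier coefficients on both sides. This yields $M_r\lambda=v_r$. Theorem~\ref{thm:mainbasis} gives $\det M_r\ne0$, so $\lambda=M_r^{-1}v_r$, and Cramer's rule gives \eqref{eq:lambda}. The entries of $M_r$ are rational by \eqref{eq:RCcoeff}, and $v_r$ is rational by Proposition~\ref{prop:extract}. Hence the $\lambda_j$ are rational.

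\emph{Passing to $N_r$.} Put
\[
\delta_j=\frac{B_{a_j}B_{b_j}}{4a_jb_j}\ne0,\qquad \Lambda=\diag(\delta_1,\ldots,\delta_d).
\]
By \eqref{eq:hnorm}, $N_r=M_r\Lambda$, so $N_r$ is nonsingular. Definition \eqref{eq:Zlambda} reads $Z=B_{2r+2}\Lambda^{-1}\lambda$. Therefore
\[
N_rZ=B_{2r+2}M_r\lambda=B_{2r+2}v_r=z_r.
\]
Cramer's rule applied to this system gives \eqref{eq:Zcramer}. Since $Z$ is the unique solution of $N_rZ=z_r$, the $Z_j(r)$ are well defined.

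\emph{The sign of $\det N_r$.} From $N_r=M_r\Lambda$ we get $\det N_r=\det M_r\prod_j\delta_j$. By Theorem~\ref{thm:mainbasis}, $\det M_r$ has sign $(-1)^{\binom d2}$. The factors $4a_jb_j$ are positive. For even $k\ge2$ one has $\sgn B_k=(-1)^{k/2+1}$. Hence
\[
\sgn(B_{a_j}B_{b_j})=(-1)^{(a_j+b_j)/2}=(-1)^{K/2-1},
\]
because $a_j+b_j=K-2$. Taking the product over the $d$ columns gives the factor $(-1)^{d(K/2-1)}$, and this yields \eqref{eq:Nsign}.

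Every step is a straightforward consequence of Theorem~\ref{thm:mainbasis}. The only point needing care is the Bernoulli sign convention and the parity bookkeeping in the last step, which we expect to be the main (though minor) obstacle.
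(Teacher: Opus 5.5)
Your proposal is correct and follows the paper's proof essentially step for step. Like the paper, you use the diagonal rescaling $N_r=M_r\Lambda$ from \eqref{eq:hnorm}, the relation $\Lambda Z=B_{2r+2}\lambda$ to get $N_rZ=B_{2r+2}v_r=z_r$, and the column sign $\sgn(B_{a_j}B_{b_j})=(-1)^{K/2-1}$ combined with \eqref{eq:mainsign}.
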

\begin{proof}
Theorem~\ref{thm:mainbasis} gives \eqref{eq:Frbasis}. Comparing the
first $d$ coefficients gives \eqref{eq:lambda}, and all entries are
rational. Put
\[
S=\diag\left(\frac{B_{a_j}B_{b_j}}{4a_jb_j}\right)_{j=1}^d.
\]
Equation~\eqref{eq:hnorm} gives $N_r=M_rS$, whereas
\eqref{eq:Zlambda} gives $SZ=B_{2r+2}\lambda$. Thus
$N_rZ=B_{2r+2}v_r=z_r$. Each diagonal entry of $S$ has sign
\[
(-1)^{a_j/2+1+b_j/2+1}=(-1)^{K/2-1}.
\]
Combining with \eqref{eq:mainsign} proves \eqref{eq:Nsign}.
\end{proof}

\begin{remark}[Why the coordinate construction is not circular]\label{rem:noncircular}
Only the finitely many coefficients of $E_{2r+2}\Delta$ through $q^d$
enter $z_r$, regardless of the target $n$. They can be found by direct
power-series multiplication. More explicitly,
\begin{equation}\label{eq:ztarget}
z_{r,n}=B_{2r+2}\tau(n)
-4(r+1)\sum_{m=1}^{n-1}\sigma_{2r+1}(m)\tau(n-m),\quad 1\le n\le d.
\end{equation}
The values $\tau(1),\ldots,\tau(d)$ can themselves be generated from
divisor sums alone by
\begin{equation}\label{eq:taurec}
\tau(1)=1,\qquad
(n-1)\tau(n)=-24\sum_{m=1}^{n-1}\sigma_1(m)\tau(n-m)\quad(n\ge2).
\end{equation}
Indeed $D\Delta=E_2\Delta$ gives this recurrence upon comparing
coefficients. Thus all $Z_j(r)$ are precomputed in a finite procedure
depending on $r$ only. No knowledge of $C_r(n)$ at a general target
$n>d$ is needed to determine them.
\end{remark}

\section{The all-weight convolution identity}
\begin{proof}[Proof of Theorem~\ref{thm:mainconv}]
By \eqref{eq:Frbasis}, \eqref{eq:hnorm}, and \eqref{eq:Zlambda},
\begin{equation}\label{eq:coeffsum}
\sum_{j=1}^dZ_j(r)h_{r,j}(n)
=B_{2r+2}[q^n](E_{2r+2}\Delta)
\end{equation}
for every $n\ge1$. Proposition~\ref{prop:extract} gives
\eqref{eq:compactC}. The Schur-complement identity gives
\[
\det\begin{pmatrix}N_r&z_r\\h_r(n)^{\mathsf T}&B_{2r+2}\tau(n)\end{pmatrix}
=\det N_r\bigl(B_{2r+2}\tau(n)-h_r(n)^{\mathsf T}N_r^{-1}z_r\bigr),
\]
which proves \eqref{eq:border}. The nonsingularity of $N_r$ was proved
in Proposition~\ref{prop:coordinates}, not assumed.
\end{proof}

\begin{corollary}[Expanded divisor-sum form]\label{cor:expanded}
With $Z_j(r)$ specified by \eqref{eq:Zcramer}, for all $r,n\ge1$,
\begin{align}
C_r(n)={}&\frac{B_{2r+2}}{4(r+1)}\tau(n)
+\sum_{j=1}^d\frac{Z_j(r)B_{a_j}}{8(r+1)}\,n\sigma_{b_j-1}(n)
\notag\\
&-\sum_{j=1}^d\frac{Z_j(r)B_{b_j}}{8(r+1)}\,n\sigma_{a_j-1}(n)
\notag\\
&-\frac1{2(r+1)}\sum_{j=1}^dZ_j(r)
\sum_{m=1}^{n-1}\bigl((r+5-j)n-(r+6)m\bigr)
\notag\\[-2pt]
&\hspace{38mm}\times\sigma_{a_j-1}(m)\sigma_{b_j-1}(n-m).
\label{eq:expanded}
\end{align}
\end{corollary}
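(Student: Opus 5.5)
The plan is to start from the compact formula \eqref{eq:compactC} of Theorem~\ref{thm:mainconv}, insert the explicit definition \eqref{eq:h} of $h_{r,j}(n)$, and distribute. Since $Z_j(r)$ is given by \eqref{eq:Zcramer}, which is well defined because Proposition~\ref{prop:coordinates} shows $N_r$ is nonsingular, the statement is a pure algebraic rewriting. No new analytic input is needed.

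Concretely, \eqref{eq:compactC} reads
\[
C_r(n)=\frac{B_{2r+2}}{4(r+1)}\tau(n)-\frac{1}{4(r+1)}\sum_{j=1}^dZ_j(r)h_{r,j}(n).
\]
Substituting \eqref{eq:h} gives three contributions, which I would treat one at a time.
\begin{itemize}
\item The term $-\tfrac{B_{a_j}}2 n\sigma_{b_j-1}(n)$, multiplied by $-\tfrac{Z_j}{4(r+1)}$, yields $+\tfrac{Z_jB_{a_j}}{8(r+1)}n\sigma_{b_j-1}(n)$.
\item The term $+\tfrac{B_{b_j}}2 n\sigma_{a_j-1}(n)$ yields $-\tfrac{Z_jB_{b_j}}{8(r+1)}n\sigma_{a_j-1}(n)$.
\item For the convolution term, use \eqref{eq:ab}: $a_j=2r+10-2j$ and $a_j+b_j=2r+12$. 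The linear weight then factors as $a_jn-(a_j+b_j)m=2\bigl((r+5-j)n-(r+6)m\bigr)$. The prefactor $-\tfrac{1}{4(r+1)}$ therefore becomes $-\tfrac{1}{2(r+1)}$, which matches the last line of \eqref{eq:expanded}.
\end{itemize}

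There is no real obstacle. The only point needing care is the sign bookkeeping in the first two terms and checking the factor of $2$ in the convolution weight, both of which are immediate from the index definitions. The empty-sum convention for $n=1$ is consistent with the convention in \eqref{eq:h}.
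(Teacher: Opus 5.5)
Your proposal is correct and follows essentially the same route as the paper: substitute \eqref{eq:h} into \eqref{eq:compactC}, track the signs of the two single-divisor terms, and use $a_jn-(a_j+b_j)m=2\bigl((r+5-j)n-(r+6)m\bigr)$, which turns the prefactor $-1/(4(r+1))$ into $-1/(2(r+1))$. Your sign and factor bookkeeping checks out, and it is consistent with the paper's $r=1$ specialization \eqref{eq:C1}.
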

\begin{proof}
Insert \eqref{eq:h} into \eqref{eq:compactC}. The first two terms of
$h_{r,j}(n)$ give the two single-divisor terms with the displayed signs.
For the double sum, use
\[
a_jn-(a_j+b_j)m
=2\bigl((r+5-j)n-(r+6)m\bigr).
\]
This accounts for the factor $1/(2(r+1))$ and proves the formula.
\end{proof}

\begin{remark}[The precise improvement over the finite theorem]
The algebraic shape of \eqref{eq:expanded} is the same as in the
finite-dimensional calculation. What is added is a proof that its
coefficients exist uniquely for every $r\ge1$, together with a nonzero
signed denominator. The formula holds for every $n$ because
\eqref{eq:Frbasis} is an identity of modular forms; fitting finitely many
Fourier coefficients is not being substituted for this identity.
\end{remark}

\begin{remark}[Scope of the explicit formula]
For a fixed $r$, the formula replaces the mixed divisor--tau convolution
by the single value $\tau(n)$, ordinary divisor sums, and weighted
convolutions of divisor sums. It does not eliminate $\tau(n)$ itself.
Nor does the sign theorem imply a sign or a short scalar formula for
each $Z_j(r)$: its Cramer numerator is a different determinant. The
determinantal specification is exact in all dimensions, although its
size increases with $r$.
\end{remark}

\section{Worked examples and the low-dimensional specializations}
The examples below are deductions from the general theorem. They also
fix concrete signs and normalizations that can be checked directly.

\subsection{Dimension one}
Here $r\in\{1,2,3,4,6\}$ and
$R_1=[E_{2r+8},E_4]_1$. Its first coefficient is
\[
[q]R_1=\frac{16(r+4)}{B_{2r+8}}(30B_{2r+8}+1).
\]
Since $[q](E_{2r+2}\Delta)=1$, we obtain
\begin{equation}\label{eq:d1lambda}
\lambda_1(r)=\frac{B_{2r+8}}{16(r+4)(30B_{2r+8}+1)},\qquad
Z_1(r)=-\frac{60B_{2r+2}}{30B_{2r+8}+1}.
\end{equation}
The denominator is nonzero by Theorem~\ref{thm:mainbasis}.
Substitution in \eqref{eq:expanded} gives
\begin{align*}
C_r(n)={}&\frac{B_{2r+2}}{4(r+1)}\tau(n)
-\frac{15B_{2r+2}B_{2r+8}}{2(r+1)(30B_{2r+8}+1)}\,n\sigma_3(n)\\
&-\frac{B_{2r+2}}{4(r+1)(30B_{2r+8}+1)}\,n\sigma_{2r+7}(n)\\
&+\frac{30B_{2r+2}}{(r+1)(30B_{2r+8}+1)}
\sum_{m=1}^{n-1}\bigl((r+4)n-(r+6)m\bigr)
\sigma_{2r+7}(m)\sigma_3(n-m).
\end{align*}
In particular $r=1$ gives $\lambda_1=1/3456$, $Z_1=11/18$, and
\begin{align}
C_1(n)={}&-\frac1{240}\tau(n)+\frac5{1728}\,n\sigma_3(n)
+\frac{11}{8640}\,n\sigma_9(n)\notag\\
&-\frac{11}{72}\sum_{m=1}^{n-1}(5n-7m)\sigma_9(m)\sigma_3(n-m).
\label{eq:C1}
\end{align}
For $n=1$ its right side is $-1/240+5/1728+11/8640=0$, as required.

\subsection{Dimension two}
Here $r\in\{5,7,8,9,10,12\}$. To retain a compact scalar version,
put $U=B_{2r+8}$, $V=B_{2r+6}$, and $t=4^r$, and write
\begin{align*}
u_1&=\frac{16(r+4)}U(30U+1),&
u_2&=\frac{24(r+3)}V(1-42V),\\
v_1&=\frac{32(r+4)}U(270U+128\cdot t-60r-119),\\
v_2&=\frac{48(r+3)}V(-1386V+32\cdot t+84r+1).
\end{align*}
These are the first two coefficients of $R_1$ and $R_2$, respectively.
Set $w=-24-4(r+1)/B_{2r+2}$. Then
\begin{equation}\label{eq:d2}
\lambda_1=\frac{v_2-u_2w}{u_1v_2-u_2v_1},\qquad
\lambda_2=\frac{u_1w-v_1}{u_1v_2-u_2v_1}.
\end{equation}
The denominator is strictly negative by \eqref{eq:mainsign}.
These formulas follow directly from the $2\times2$ coordinate system
and recover the expanded scalar formulas directly without introducing additional exceptional denominators.

For $r=5$, exact simplification gives
\begin{equation}\label{eq:r5coords}
(Z_1,Z_2)=\left(\frac{95}{5304},\frac7{104}\right),\qquad
(\lambda_1,\lambda_2)=
\left(\frac{219335}{487171584},\frac{25319}{216520704}\right).
\end{equation}
Thus the convolution $\sigma_{11}*\tau$ is
\begin{align}
C_5(n)={}&-\frac{691}{65520}\tau(n)
+\frac{219335}{10692864}\,n\sigma_3(n)
-\frac{25319}{2545920}\,n\sigma_5(n)\notag\\
&+\frac{19}{1527552}\,n\sigma_{17}(n)
-\frac1{29952}\,n\sigma_{15}(n)\notag\\
&-\frac{95}{63648}\sum_{m=1}^{n-1}(9n-11m)
\sigma_{17}(m)\sigma_3(n-m)\notag\\
&-\frac7{1248}\sum_{m=1}^{n-1}(8n-11m)
\sigma_{15}(m)\sigma_5(n-m).
\label{eq:C5}
\end{align}
This is exactly the corresponding specialization of the general formula.

\subsection{A dimension-three example}
For $r=11$ we have $K=36$ and $d=3$. The general rational system gives
\begin{equation}\label{eq:r11coords}
(Z_1,Z_2,Z_3)=
\left(\frac{3386471}{4177258020},\frac{27566}{2118285},
\frac{316393}{11080260}\right).
\end{equation}
Here $(a_1,a_2,a_3)=(30,28,26)$ and $(b_1,b_2,b_3)=(4,6,8)$.
Together with \eqref{eq:expanded}, these three rational numbers give
the identity for $C_{11}(n)$ for all $n$, without printing a large
expanded polynomial numerator. The determinant sign is negative.

\subsection{A higher-dimensional example}
The first parameter with $d=9$ is $r=47$, for which $K=108$.
The basis now contains the nine brackets
\[
[E_{102},E_4]_1,[E_{100},E_6]_1,\ldots,[E_{86},E_{20}]_1.
\]
Theorem~\ref{thm:mainbasis} gives a strictly positive coefficient
determinant, since $\binom92=36$. Equations \eqref{eq:h},
\eqref{eq:Nz}, and \eqref{eq:Zcramer} specify the nine coefficients
of the identity for $C_{47}(n)$ exactly. No new independence calculation
is required at this dimension or at any subsequent dimension.

\section{Unique canonical all-weight congruences}\label{sec:uniquecong}

The characteristic-zero formula of Theorem~\ref{thm:mainconv} already gives a
preferred coordinate system. The purpose of this section is to show that, away
from a finite set of primes, this coordinate system survives reduction without any
loss of rank. The resulting modular congruence is therefore not obtained by fitting
a finite list of values of $n$: it is the reduction of an identity of modular forms,
and its coefficient vector is forced by the reduced basis.

Throughout this section put
\[
K=2r+14,\qquad s=2r+2=K-12,\qquad d=\dim S_K=\dim M_s,
\]
and retain the consecutive pairs
\[
a_j=2r+10-2j,\qquad b_j=2j+2,\qquad 1\le j\le d.
\]
Set
\begin{equation}\label{eq:all-blocks}
H_j=[G_{a_j},G_{b_j}]_1,\qquad
T_r=-2sG_s\Delta=B_sE_s\Delta.
\end{equation}

For a prime $p$, write $R_p=\mathbf Z_{(p)}$ and define the cusp-form lattice
\begin{equation}\label{eq:integral-lattice}
\mathcal S_{K,p}:=\bigoplus_{m=1}^d R_p\Phi_m\subset S_K(\mathbf Q),
\qquad
\overline{\mathcal S}_{K,p}:=\mathcal S_{K,p}/p\mathcal S_{K,p}.
\end{equation}
Thus $\overline{\mathcal S}_{K,p}$ is a $d$-dimensional $\mathbf F_p$-space.
Because the first-$d$ coefficient matrix of the $\Phi_m$ is unit lower
triangular over $\mathbf Z$, $\mathcal S_{K,p}$ is equivalently the set of
rational cusp forms of weight $K$ whose Fourier coefficients are $p$-integral.
All reductions in this section take place in
$\overline{\mathcal S}_{K,p}$; no identification with the full Katz space of
mod-$p$ modular forms is required.

\begin{lemma}[The all-weight identity as a modular-form coordinate identity]
\label{lem:all-coordinate}
For every $j$ and every $n\ge1$,
\[
[q^n]H_j=h_{r,j}(n).
\]
Moreover
\begin{equation}\label{eq:all-coordinate}
T_r=\sum_{j=1}^d Z_j(r)H_j.
\end{equation}
\end{lemma}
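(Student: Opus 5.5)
The plan is to reduce both claims to the normalization already recorded in \eqref{eq:hnorm} and in Proposition~\ref{prop:coordinates}. Nothing new is needed beyond bilinearity of the bracket and the scalar relation between $G_k$ and $E_k$.

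For the first claim, I would use bilinearity of $[\,\cdot,\cdot\,]_1$. Since $G_k=-\frac{B_k}{2k}E_k$, we get
\[
H_j=[G_{a_j},G_{b_j}]_1=\frac{B_{a_j}B_{b_j}}{4a_jb_j}[E_{a_j},E_{b_j}]_1=\frac{B_{a_j}B_{b_j}}{4a_jb_j}R_j .
\]
The weights of $G_k$ and $E_k$ agree, so the bracket convention \eqref{eq:RCintro} is unchanged. Taking $q^n$-coefficients and comparing with \eqref{eq:hnorm} gives $[q^n]H_j=h_{r,j}(n)$ for every $n\ge1$. As a consistency check, one can instead expand $G_k=-\frac{B_k}{2k}+\sum_{n\ge1}\sigma_{k-1}(n)q^n$ directly in the coefficient formula preceding \eqref{eq:RCcoeff}. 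The $u=0$ and $u=n$ boundary terms then produce exactly the two single-divisor terms in \eqref{eq:h}, with coefficients $-\frac{B_{a_j}}{2}$ and $+\frac{B_{b_j}}{2}$, because $a_jn$ and $a_jn-(a_j+b_j)n=-b_jn$ appear there. The interior terms give the double sum.

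For the identity \eqref{eq:all-coordinate}, I would start from Proposition~\ref{prop:coordinates}, which gives $E_s\Delta=\sum_j\lambda_j(r)R_j$. Multiplying by $B_s$ and using $T_r=-2sG_s\Delta=B_sE_s\Delta$ yields
\[
T_r=\sum_jB_s\lambda_j(r)R_j.
\]
Substituting $R_j=\frac{4a_jb_j}{B_{a_j}B_{b_j}}H_j$ and recognizing \eqref{eq:Zlambda} gives $T_r=\sum_jZ_j(r)H_j$. This is an identity of modular forms, not merely of the first $d$ coefficients. The $Z_j(r)$ are the Cramer quotients \eqref{eq:Zcramer}, because $N_rZ=z_r$ and $N_r$ is nonsingular, both by Proposition~\ref{prop:coordinates}.

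The only point needing care, and the main obstacle such as it is, is the sign and scalar bookkeeping: checking that $-2s\cdot(-\frac{B_s}{2s})=B_s$, and that the denominators $B_{a_j},B_{b_j}$ are nonzero. The latter holds because even-index Bernoulli numbers never vanish. No analytic input beyond Theorem~\ref{thm:mainbasis}, which is used through Proposition~\ref{prop:coordinates}, is required.
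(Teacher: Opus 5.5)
Your proof is correct. The first claim is handled exactly as in the paper, by bilinearity and \eqref{eq:hnorm}. For \eqref{eq:all-coordinate}, the paper argues at the level of coefficients: it substitutes Theorem~\ref{thm:mainconv} into Proposition~\ref{prop:extract} to match every positive Fourier coefficient, and then concludes that the two cusp forms are equal.

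You instead rescale the modular-form identity \eqref{eq:Frbasis} directly, multiplying by $B_s$ and substituting $R_j=\frac{4a_jb_j}{B_{a_j}B_{b_j}}H_j$. The paper's proof of Theorem~\ref{thm:mainconv} is itself derived from \eqref{eq:Frbasis}. So your route is the shorter path: it avoids the round trip through $C_r(n)$ and the final appeal to the $q$-expansion principle.
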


\begin{proof}
Since $G_k=-B_kE_k/(2k)$ and the first Rankin--Cohen bracket is bilinear,
\[
H_j=\frac{B_{a_j}B_{b_j}}{4a_jb_j}[E_{a_j},E_{b_j}]_1.
\]
Equation~\eqref{eq:hnorm} therefore gives the first assertion. Proposition~2.2
gives
\[
[q^n](B_sE_s\Delta)=B_s\tau(n)-2sC_r(n),
\]
while $2s=4(r+1)$. Substituting Theorem~\ref{thm:mainconv} proves equality
of every positive Fourier coefficient. Both sides are cusp forms of weight
$K$, hence \eqref{eq:all-coordinate} follows.
\end{proof}

\begin{lemma}[The first $d$ coefficients are integral coordinates]
\label{lem:first-d-coordinates}
The map
\[
\mathcal S_{K,p}\longrightarrow R_p^d,
\qquad
F\longmapsto ([q]F,\ldots,[q^d]F),
\]
is an $R_p$-module isomorphism. In particular, $d$ forms in
$\mathcal S_{K,p}$ form an $R_p$-basis if and only if their first-$d$ Fourier
coefficient matrix has unit determinant.
\end{lemma}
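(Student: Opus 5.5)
The plan is to reduce everything to the integral unit lower triangular structure of the basis $\Phi_1,\ldots,\Phi_d$ from \eqref{eq:Fbasis}. Write $Q$ for the $d\times d$ matrix $([q^n]\Phi_m)_{n,m}$. Since $\Phi_m=q^m+O(q^{m+1})$, and $\Phi_m$ is a monomial in $\Delta,E_4,E_6$, all of which have integral $q$-expansions, $Q$ is unit lower triangular with entries in $\mathbf Z$. Hence $\det Q=1$ and $Q^{-1}$ also has integer entries.

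First I would show the map is $R_p$-linear and sends $\mathcal S_{K,p}$ into $R_p^d$. Linearity is clear. A general element is $F=\sum_m c_m\Phi_m$ with $c_m\in R_p$, and the Fourier coefficients of each $\Phi_m$ are integers. So every coefficient of $F$, in particular $[q^n]F$ for $1\le n\le d$, lies in $R_p$.

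Next, in the basis $\{\Phi_m\}$ the map is given by the matrix $Q$: the coefficient vector of $F$ is $Q(c_1,\ldots,c_d)^{\mathsf T}$. Because $Q\in\mathrm{GL}_d(\mathbf Z)\subset\mathrm{GL}_d(R_p)$, the map is bijective onto $R_p^d$. Explicitly, given $v\in R_p^d$, the vector $c=Q^{-1}v$ lies in $R_p^d$, and $\sum c_m\Phi_m$ is the unique preimage. This gives the isomorphism. It also justifies the earlier remark that $\mathcal S_{K,p}$ is exactly the set of rational cusp forms with $p$-integral coefficients: such a form has a $p$-integral first-$d$ vector, its coordinates in the $\Phi$-basis are $Q^{-1}$ times that vector, and so the form lies in the lattice.

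For the basis criterion, let $F_1,\ldots,F_d\in\mathcal S_{K,p}$ and let $P$ be their first-$d$ coefficient matrix. Under the isomorphism they correspond to the columns of $P$ in $R_p^d$. An isomorphism carries bases to bases, so $F_1,\ldots,F_d$ form an $R_p$-basis exactly when the columns of $P$ form an $R_p$-basis of $R_p^d$. Over the local ring $R_p$ this happens exactly when $P\in\mathrm{GL}_d(R_p)$, that is, when $\det P\in R_p^\times$.

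There is no serious obstacle. The only point needing care is that the $\Phi_m$ have integral, not merely rational, Fourier coefficients, which follows from the integrality of $E_4$, $E_6$ and $\Delta$ together with $\det Q=1$.
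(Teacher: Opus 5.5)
Your proof is correct and follows the paper's argument. The paper also observes that $\Phi_m=q^m+O(q^{m+1})$ makes the first-$d$ coefficient matrix of the $\Phi_m$ unit lower triangular over $\mathbf Z$, hence invertible over $R_p$, and this gives the isomorphism, the unit-determinant basis criterion, and the $q$-expansion description of $\mathcal S_{K,p}$. You additionally spell out the integrality of the $q$-expansions of $E_4$, $E_6$, $\Delta$ and the surjectivity via $Q^{-1}$, both of which the paper leaves implicit.
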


\begin{proof}
Each $\Phi_m$ satisfies $\Phi_m=q^m+O(q^{m+1})$. Hence the matrix of the first
$d$ positive Fourier coefficients of $\Phi_1,\ldots,\Phi_d$ is unit lower
triangular over $\mathbf Z$, and therefore over $R_p$. This proves the asserted
coordinate isomorphism and determinant criterion. It also justifies the
$q$-expansion characterization of $\mathcal S_{K,p}$ stated above.
\end{proof}

\begin{definition}[All-weight good primes]
A prime $p$ is \emph{coefficient-admissible} for $r$ if the rational Fourier
coefficients of $T_r,H_1,\ldots,H_d$ are $p$-integral and $2s$ is a
$p$-adic unit. Equivalently, the target and canonical blocks lie in
$\mathcal S_{K,p}$ and $2s\in R_p^\times$. It is \emph{all-weight good} if in addition
\begin{equation}\label{eq:good-prime}
\det N_r\not\equiv0\pmod p.
\end{equation}
\end{definition}

\begin{proposition}[A uniform safe range]\label{prop:safe-range}
Every prime
\begin{equation}\label{eq:safe-range}
p\ge K-3
\end{equation}
is coefficient-admissible. Consequently a prime in this range is all-weight
good exactly when it does not divide the numerator of $\det N_r$ in lowest terms.
\end{proposition}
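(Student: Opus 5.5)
The plan is to verify the three coefficient-admissibility conditions directly from the explicit Fourier formulas: (i) $2s$ is a $p$-adic unit, (ii) the coefficients of $T_r$ are $p$-integral, (iii) the coefficients of each $H_j$ are $p$-integral. The only arithmetic input is the von Staudt--Clausen theorem, which says that for even $k\ge2$ a prime $\ell$ divides the denominator of $B_k$ only if $(\ell-1)\mid k$, and hence only if $\ell\le k+1$.

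\emph{The unit $2s$.} Since $K=2r+14\ge16$, any prime $p\ge K-3$ satisfies $p\ge13$, so $p$ is odd. Also $s=K-12<K-3\le p$, so $p\nmid 2s$.

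\emph{The target $T_r$.} By Proposition~\ref{prop:extract} and the proof of Lemma~\ref{lem:all-coordinate},
\[
[q^n]T_r=B_s\tau(n)-2sC_r(n),
\]
where $\tau(n)$ and $C_r(n)$ are integers. A prime dividing the denominator of $B_s$ is at most $s+1=K-11<p$. Hence these coefficients are $p$-integral.

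\emph{The blocks $H_j$.} By Lemma~\ref{lem:all-coordinate}, $[q^n]H_j=h_{r,j}(n)$, and \eqref{eq:h} writes this as $-\tfrac{B_{a_j}}2$ and $\tfrac{B_{b_j}}2$ times integers, plus an integer double sum. The largest Bernoulli index that occurs is $a_1=2r+8=K-6$. So every prime in a Bernoulli denominator, other than $2$, is at most $K-5<p$, and the factor $\tfrac12$ is harmless because $p$ is odd. Thus all $h_{r,j}(n)$ are $p$-integral. Equivalently, $T_r$ and the $H_j$ lie in $\mathcal S_{K,p}$, by its $q$-expansion characterization.

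\emph{The consequence.} For $p\ge K-3$, the entries of $N_r$ are values $h_{r,j}(n)$ and are therefore $p$-integral. So $\det N_r\in R_p$, and its lowest-terms denominator is prime to $p$. Hence $\det N_r\not\equiv0\pmod p$ exactly when $p$ does not divide the numerator. Here $\det N_r\neq0$ by Proposition~\ref{prop:coordinates}.

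There is no real obstacle in this argument. The only care needed is the bookkeeping of the extreme indices: using $a_1=K-6$ as the largest Bernoulli index, and checking that the stated bound $K-3$ dominates both $a_1+1$ and $s+1$, with some slack.
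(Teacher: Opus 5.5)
Your proof is correct and follows the paper's argument. Von Staudt--Clausen with largest Bernoulli index $a_1=K-6$, together with $p\nmid 2s$, gives $p$-integrality of $T_r$ and the $H_j$, and hence the numerator criterion for $\det N_r$; the only cosmetic difference is that you read integrality off the explicit coefficients $h_{r,j}(n)$, where the factor $2a_j$ has already cancelled, whereas the paper checks that each $G_k$ is $p$-integral (also using $p\nmid 2a_j,2b_j$) and that the bracket preserves this.
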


\begin{proof}
The Bernoulli numbers involved in $T_r$ and the $H_j$ have even indices among
$s,a_1,\ldots,a_d,b_1,\ldots,b_d$. Their largest index is
$a_1=K-6$. If $p\ge K-3$, then $p-1>K-6$, so the von
Staudt--Clausen theorem shows that $p$ occurs in none of the relevant Bernoulli
denominators. Moreover $a_j,b_j\le K-6<p$ and $s=K-12<p$. Hence $p$ divides none
of the integer factors $2a_j$, $2b_j$, or $2s$.

It is useful to make explicit why the normalization $G_k=-B_kE_k/(2k)$
introduces no hidden denominator. From \eqref{eq:E},
\begin{equation}\label{eq:G-integral-expansion}
G_k=-\frac{B_k}{2k}+\sum_{n\ge1}\sigma_{k-1}(n)q^n.
\end{equation}
Thus all positive Fourier coefficients of $G_k$ are integers, and in the
present range its constant term is $p$-integral because neither the denominator
of $B_k$ nor $2k$ is divisible by $p$. It follows directly that every
$H_j=[G_{a_j},G_{b_j}]_1$ is $p$-integral, since $D$ and the integral weight
factors in the first bracket preserve $p$-integrality. Likewise
\[
T_r=B_sE_s\Delta
=\left(B_s-2s\sum_{n\ge1}\sigma_{s-1}(n)q^n\right)\Delta
\]
is $p$-integral. Finally $2s$ is a $p$-adic unit. Therefore the entries of
$N_r$ and $z_r$ are $p$-integral, and in this range \eqref{eq:good-prime}
is the only remaining condition.
\end{proof}

\begin{proposition}[Equivalent forms of the good-prime condition]
\label{prop:good-equivalences}
For a coefficient-admissible prime $p$, the following are equivalent:
\begin{enumerate}
\item $p$ is all-weight good;
\item $\det N_r$ is a $p$-adic unit;
\item the reductions $\overline{H_1},\ldots,\overline{H_d}$ form a basis of
$\overline{\mathcal S}_{K,p}$;
\item the target $\overline{T_r}$ has a unique coordinate vector in the
reduced canonical blocks.
\end{enumerate}
\end{proposition}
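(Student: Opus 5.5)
The plan is to prove the cycle of equivalences through the coordinate isomorphism of Lemma~\ref{lem:first-d-coordinates}. Since $p$ is coefficient-admissible, every $H_j$ and $T_r$ lies in $\mathcal S_{K,p}$. By Lemma~\ref{lem:all-coordinate} the first-$d$ coefficient matrix of $H_1,\ldots,H_d$ is exactly $N_r=(h_{r,j}(n))_{n,j\le d}$, and that of $T_r$ is $z_r$, with $z_{r,n}=B_s[q^n](E_s\Delta)$. So (1)$\Leftrightarrow$(2) is immediate: $\det N_r\in R_p$, and an element of $R_p$ is a unit exactly when its reduction is nonzero, which is the definition \eqref{eq:good-prime}.

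For (2)$\Leftrightarrow$(3), Lemma~\ref{lem:first-d-coordinates} gives an $R_p$-isomorphism $\mathcal S_{K,p}\cong R_p^d$. Tensoring with $\mathbf F_p$ yields an $\mathbf F_p$-isomorphism $\overline{\mathcal S}_{K,p}\cong\mathbf F_p^d$, under which $\overline{H_j}$ maps to the $j$th column of $N_r\bmod p$. Thus the $\overline{H_j}$ form a basis if and only if $\det(N_r\bmod p)\ne0$, which is (2). Equivalently, by the determinant criterion in Lemma~\ref{lem:first-d-coordinates}, the $H_j$ form an $R_p$-basis of $\mathcal S_{K,p}$, and such a basis reduces to a basis mod $p$.

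The implication (3)$\Rightarrow$(4) is then trivial, because any vector has unique coordinates in a basis; concretely these coordinates are the reduction of $N_r^{-1}z_r=Z$. For (4)$\Rightarrow$(3) I would argue by contraposition. If the $\overline{H_j}$ are not a basis, then, since there are $d$ of them in a $d$-dimensional space, they are linearly dependent, so the reduced system $\overline{N_r}\,x=\overline{z_r}$ has a nontrivial kernel. A coordinate vector for $\overline{T_r}$ does exist, namely the reduction of $Z$ in \eqref{eq:all-coordinate}. This needs $Z_j(r)\in R_p$, and that is the delicate point: when $\det N_r$ is not a unit, $Z$ need not be $p$-integral.

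To handle this I would not use \eqref{eq:all-coordinate} mod $p$ directly. Instead I would interpret (4) as asserting both existence and uniqueness of a solution, so its failure holds whenever the kernel is nonzero. The cleaner option is to state (4) as ``the reduced linear system has exactly one solution,'' which forces $\overline{N_r}$ to be injective and hence invertible. Under that reading, a nonzero kernel means there is either no solution or several, so uniqueness fails. I would make this reading explicit, and remark that existence is automatic under (2) via $Z=N_r^{-1}z_r\in R_p^d$. This existence step, making sure $D_j\equiv Z_j(r)$ is meaningful, is the main point to phrase carefully; the rest is linear algebra over the discrete valuation ring $R_p$.
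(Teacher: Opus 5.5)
Your proposal is correct and follows essentially the same route as the paper: Lemma~\ref{lem:first-d-coordinates} identifies $N_r$ with the first-$d$ coefficient matrix of the $H_j$, which gives the equivalence of (1)--(3). Then (3)$\Rightarrow$(4) is uniqueness of coordinates in a basis, and (4)$\Rightarrow$(3) follows by perturbing a coordinate vector by a nontrivial relation. Reading (4) as ``exactly one coordinate vector exists'' is the same reading the paper uses implicitly, and it correctly handles the case where $Z$ is not $p$-integral and no coordinate vector exists.
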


\begin{proof}
By Lemma~\ref{lem:first-d-coordinates}, the matrix $N_r$ is exactly the
first-$d$ coefficient matrix of the canonical blocks $H_j$. Thus (1)--(3) are
equivalent. A basis gives unique coordinates for every vector, proving (3)$\Rightarrow$(4).
Conversely, if the $H_j$ were linearly dependent modulo $p$, then any coordinate
vector of $T_r$ could be altered by a nonzero relation among the $H_j$, contradicting
uniqueness. Hence (4)$\Rightarrow$(3).
\end{proof}

\begin{theorem}[Unique canonical congruence]\label{thm:unique-congruence}
Let $p$ be all-weight good. Then there exists a unique vector
$(D_1,\ldots,D_d)\in\mathbf F_p^d$ such that, for every $n\ge1$,
\begin{equation}\label{eq:unique-congruence}
C_r(n)\equiv
\frac{B_s}{2s}\tau(n)
-\frac1{2s}\sum_{j=1}^dD_jh_{r,j}(n)\pmod p.
\end{equation}
Moreover
\begin{equation}\label{eq:unique-coordinates}
D_j\equiv Z_j(r)\pmod p\qquad(1\le j\le d).
\end{equation}
\end{theorem}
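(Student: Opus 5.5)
Existence comes from reducing the characteristic-zero identity of Lemma~\ref{lem:all-coordinate} modulo $p$; uniqueness comes from Proposition~\ref{prop:good-equivalences}.

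For existence, note first that $p$ is all-weight good. Hence the blocks $H_j$ and the target $T_r$ lie in $\mathcal S_{K,p}$, and $N_r$ is invertible over $R_p$. From $N_rZ=z_r$ (Proposition~\ref{prop:coordinates}) we get $Z=N_r^{-1}z_r\in R_p^d$, since $z_r$ consists of Fourier coefficients of $T_r$ and is therefore $p$-integral. So each $Z_j(r)$ has a reduction $D_j\in\mathbf F_p$.

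Reducing \eqref{eq:all-coordinate} coefficientwise gives
\[
[q^n]T_r\equiv\sum_jD_jh_{r,j}(n)\pmod p
\]
for every $n\ge1$. By the computation in Lemma~\ref{lem:all-coordinate},
\[
[q^n]T_r=B_s\tau(n)-2sC_r(n).
\]
Since $2s\in R_p^\times$, we may divide by $2s$. Here $B_s$ is $p$-integral, being the constant term of $T_r/\Delta$ up to unit factors, or directly by coefficient admissibility. This yields \eqref{eq:unique-congruence} with $D_j\equiv Z_j(r)$, which gives \eqref{eq:unique-coordinates}.

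For uniqueness, suppose $(D'_j)$ is a second vector satisfying \eqref{eq:unique-congruence} for every $n$. Subtracting the two congruences and multiplying by $2s$ gives
\[
\sum_j(D_j-D'_j)h_{r,j}(n)\equiv0\pmod p
\]
for all $n$. Restricting to $1\le n\le d$, this says that $\overline{N_r}$ applied to the vector $(D_j-D'_j)_j$ is zero. Because $\det N_r$ is a $p$-adic unit, $\overline{N_r}$ is invertible over $\mathbf F_p$, so the difference vanishes. Equivalently, one can invoke part~(3) of Proposition~\ref{prop:good-equivalences}: the $\overline{H_j}$ are independent in $\overline{\mathcal S}_{K,p}$.

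Only the first $d$ coefficients are used for uniqueness. This is enough because the congruence is assumed to hold for all $n$, and in particular for $n\le d$.

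The main point needing care is the $p$-integrality of $Z$ and of the scalar $B_s/(2s)$, so that both sides of \eqref{eq:unique-congruence} make sense in $\mathbf F_p$. I would handle this as follows:
\begin{itemize}
\item $Z$ is $p$-integral by Cramer's rule with a unit denominator.
\item $B_s$ is $p$-integral because $T_r=B_sE_s\Delta$ is $p$-integral with $[q]T_r=B_s$.
\end{itemize}
The congruence for $C_r(n)$ is then read as an equality in $R_p/pR_p$.
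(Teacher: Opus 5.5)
Your proposal is correct and follows essentially the paper's argument. You reduce the $\mathbf Z_{(p)}$-integral identity $T_r=\sum_j Z_j(r)H_j$ coefficientwise, use $[q^n]T_r=B_s\tau(n)-2sC_r(n)$, and get uniqueness from the fact that the reduced blocks form a basis; your invertibility of $\overline{N_r}$ on the first $d$ coefficients is the concrete form of that basis statement. You also spell out what the paper leaves implicit in saying the identity is defined over $\mathbf Z_{(p)}$: $Z$ is $p$-integral by Cramer's rule with a unit denominator, and $B_s=[q]T_r$ is $p$-integral.
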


\begin{proof}
By Proposition~\ref{prop:good-equivalences}, the reductions
$\overline{H_1},\ldots,\overline{H_d}$ form a basis of
$\overline{\mathcal S}_{K,p}$. The characteristic-zero identity
\eqref{eq:all-coordinate} is defined over $\mathbf Z_{(p)}$, so it may be reduced
coefficientwise modulo $p$. This gives
\[
\overline{T_r}=\sum_{j=1}^d\overline{Z_j(r)}\,\overline{H_j}.
\]
Because the reduced blocks form a basis, this coordinate vector is the only one
possible. Taking the coefficient of $q^n$ and using
$[q^n]T_r=B_s\tau(n)-2sC_r(n)$ gives \eqref{eq:unique-congruence} for every
$n\ge1$ simultaneously. No Sturm-bound argument or interpolation in the variable
$n$ is needed: the congruence is the Fourier expansion of a single modular-form
identity.
\end{proof}

\begin{remark}[Meaning of uniqueness]
For fixed $r$ and a fixed good prime $p$, Theorem~\ref{thm:unique-congruence}
asserts uniqueness of the coefficient vector in the canonical consecutive blocks.
Different good primes generally give different reductions of the same rational
Cramer vector, and a noncanonical choice of modular forms may lead to a different
looking identity. The theorem is therefore a basis-theoretic uniqueness statement,
not a claim that the displayed congruence is the only conceivable formula modulo
$p$.
\end{remark}

\begin{remark}[Scope of the mod-$p$ statement]
The space $\overline{\mathcal S}_{K,p}$ is the reduction of a fixed
characteristic-zero $q$-expansion lattice. The theorem therefore asserts
uniqueness inside this canonical reduction, which is exactly what is needed to
reduce \eqref{eq:all-coordinate}. No claim about equality with a larger
characteristic-$p$ moduli-theoretic space is used or required.
\end{remark}

\begin{corollary}[Infinitely many unique congruences in every weight]
\label{cor:infinitely-many}
For every $r\ge1$, all but finitely many primes $p\ge K-3$ are all-weight good.
Consequently every $C_r$ admits infinitely many prime-wise unique canonical
all-weight congruences of the form \eqref{eq:unique-congruence}.
\end{corollary}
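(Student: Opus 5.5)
The plan is to combine the safe range of Proposition~\ref{prop:safe-range} with the nonvanishing of $\det N_r$ from Proposition~\ref{prop:coordinates}, and then to invoke Theorem~\ref{thm:unique-congruence} prime by prime.

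First, fix $r\ge1$. By Proposition~\ref{prop:safe-range}, every prime $p\ge K-3$ is coefficient-admissible. For such a prime, being all-weight good is equivalent to $p$ not dividing the numerator of $\det N_r$ written in lowest terms.

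Second, I will use that $\det N_r$ is a nonzero rational number. The entries of $N_r$ are rational by \eqref{eq:h}, and Proposition~\ref{prop:coordinates}, through the sign formula \eqref{eq:Nsign} (ultimately Theorem~\ref{thm:mainbasis}), gives $\det N_r\ne0$. Write $\det N_r=U/V$ in lowest terms with $U\ne0$. Then $U$ has only finitely many prime divisors. Every prime $p\ge K-3$ not dividing $U$ satisfies $\det N_r\not\equiv0\pmod p$. Here the reduction makes sense because $p$-integrality of the entries, already established in the safe range, forces $p\nmid V$. This proves the first assertion.

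Third, there are infinitely many primes $p\ge K-3$, and only finitely many of them are excluded. So infinitely many all-weight good primes remain. For each of them, Theorem~\ref{thm:unique-congruence} supplies a unique vector $(D_j)$ satisfying \eqref{eq:unique-congruence}, namely $D_j\equiv Z_j(r)$.

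There is no real obstacle here. The only point needing care is the bookkeeping in the second step: the determinant condition must be read as a statement about a $p$-integral rational number, which is exactly what coefficient-admissibility guarantees.
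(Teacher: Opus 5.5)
Your proposal is correct and follows the paper's argument essentially step for step. Both proofs use the nonvanishing of $\det N_r$ (the paper cites Theorem~\ref{thm:mainconv}, which itself rests on Proposition~\ref{prop:coordinates}), the finiteness of the prime divisors of its numerator, and the safe range of Proposition~\ref{prop:safe-range}, and then apply Theorem~\ref{thm:unique-congruence} at each remaining prime; your explicit remark that $p$-integrality forces $p\nmid V$ just makes visible a bookkeeping point the paper leaves implicit.
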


\begin{proof}
By Theorem~\ref{thm:mainconv}, $D_r:=\det N_r$ is a nonzero rational number.
Write $D_r=u/v$ in lowest terms. Only the finitely many primes dividing $u$ can
violate the determinant condition once coefficient-admissibility is known. By
Proposition~\ref{prop:safe-range}, every prime $p\ge K-3$ is already
coefficient-admissible, so every such prime with $p\nmid u$ is good. There are
infinitely many primes and only finitely many excluded divisors of $u$; hence
infinitely many good primes remain. Applying Theorem~\ref{thm:unique-congruence}
to each of them proves the claim.
\end{proof}

\section{Cramer coordinates and simultaneous coefficient vanishing}
\label{sec:cramer}

The preceding section proves existence and uniqueness for every good prime.
The remaining arithmetic of a particular congruence is therefore entirely
coordinatewise: a canonical block disappears exactly when its unique Cramer
coordinate becomes zero modulo $p$. We now record this fact in forms that are
useful both for individual coordinates and for simultaneous vanishing.

Put
\begin{equation}\label{eq:cramerAD}
D_r=\det N_r,\qquad
A_{r,j}=\det N_r[j\leftarrow z_r].
\end{equation}
Then $Z_j=A_{r,j}/D_r$. Write each coordinate in lowest terms as
\begin{equation}\label{eq:UV}
Z_j(r)=\frac{U_j(r)}{V_j(r)},
\qquad
U_j(r)\in\mathbf Z,\quad V_j(r)\in\mathbf Z_{>0},
\quad \gcd(U_j,V_j)=1.
\end{equation}

For a fixed good prime $p$, Cramer's rule may be read locally in
$\mathbf Z_{(p)}$. Since $D_r$ is a unit there,
\[
Z_j=\frac{A_{r,j}}{D_r}\in\mathbf Z_{(p)},
\]
and hence the residue of $Z_j$ vanishes if and only if $A_{r,j}$ lies in
$p\mathbf Z_{(p)}$. Passing to the globally reduced fraction $U_j/V_j$ removes
all common numerator--denominator factors and yields the following intrinsic
criterion.

\begin{theorem}[Cramer numerator criterion]\label{thm:cramer-numerator}
If $p$ is all-weight good and $(D_1,\ldots,D_d)$ is the unique coefficient vector
from Theorem~\ref{thm:unique-congruence}, then every $V_j(r)$ is a $p$-adic unit
and
\begin{equation}\label{eq:cramer-zero}
\boxed{\;D_j=0\quad\Longleftrightarrow\quad p\mid U_j(r).\;}
\end{equation}
Thus the reduced Cramer numerators record exactly which canonical terms disappear
from the unique congruence modulo $p$.
\end{theorem}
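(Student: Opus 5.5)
The argument is local at $p$: first show that $Z_j(r)$ lies in $\mathbf Z_{(p)}$, then compare its residue with the reduced fraction $U_j/V_j$.

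\emph{Step 1: $Z_j(r)$ is $p$-integral.} Because $p$ is all-weight good, it is in particular coefficient-admissible. Hence every entry of $N_r$, which is a Fourier coefficient $h_{r,j}(n)=[q^n]H_j$, lies in $R_p=\mathbf Z_{(p)}$. Likewise every entry of $z_r=([q^n]T_r)_{n\le d}$ lies in $R_p$. The Cramer numerator $A_{r,j}=\det N_r[j\leftarrow z_r]$ is a polynomial with integer coefficients in these entries, so $A_{r,j}\in R_p$. The good-prime condition \eqref{eq:good-prime} says $D_r=\det N_r\in R_p^\times$. Therefore $Z_j=A_{r,j}/D_r\in R_p$. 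In particular the residue $\overline{Z_j}$ is defined, and by \eqref{eq:unique-coordinates} it equals $D_j$.

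\emph{Step 2: the denominator $V_j$ is a $p$-adic unit.} Write $Z_j=U_j/V_j$ in lowest terms, with $\gcd(U_j,V_j)=1$. Suppose $p\mid V_j$. Then $p\nmid U_j$, so the $p$-adic valuation of $Z_j$ equals $-v_p(V_j)<0$. This contradicts $Z_j\in R_p$ from Step~1. Hence $p\nmid V_j$ and $V_j\in R_p^\times$.

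\emph{Step 3: the vanishing criterion.} Since $V_j$ is a unit, $Z_j=U_j\cdot V_j^{-1}$ in $R_p$. Reducing modulo $p$ gives $D_j=\overline{U_j}\,\overline{V_j}^{-1}$ in $\mathbf F_p$, with $\overline{V_j}\neq0$. So $D_j=0$ holds exactly when $\overline{U_j}=0$, that is, exactly when $p\mid U_j(r)$. This is \eqref{eq:cramer-zero}.

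The only point needing care is Step~1. The $p$-integrality of all entries of $N_r$ and $z_r$, and not just the unit determinant, is needed to conclude $A_{r,j}\in R_p$. This is exactly what coefficient-admissibility, via Lemma~\ref{lem:all-coordinate}, supplies. Everything after that is elementary valuation bookkeeping.
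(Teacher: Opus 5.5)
Your proposal is correct and follows essentially the same route as the paper: you get $p$-integrality of $A_{r,j}$ from coefficient-admissibility, divide by the unit $\det N_r$ to land $Z_j$ in $\mathbf Z_{(p)}$, rule out $p\mid V_j$, and then read off $D_j=0\iff p\mid U_j$ via $D_j\equiv Z_j\pmod p$. You spell out the valuation bookkeeping slightly more explicitly than the paper does, but the argument is the same.
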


\begin{proof}
For a good prime, $D_r$ is a $p$-adic unit and every entry of $N_r$ and $z_r$
is $p$-integral. Thus $A_{r,j}\in\mathbf Z_{(p)}$ and
$Z_j=A_{r,j}/D_r\in\mathbf Z_{(p)}$. If the lowest-term denominator $V_j$
were divisible by $p$, then $Z_j$ would fail to be $p$-integral, a contradiction;
so $V_j$ is a unit. Therefore
\[
Z_j\equiv0\pmod p
\quad\Longleftrightarrow\quad
U_j\equiv0\pmod p.
\]
Combining this with \eqref{eq:unique-coordinates} proves
\eqref{eq:cramer-zero}.
\end{proof}

\begin{remark}[Characteristic-zero zero coordinates]
If some $U_j(r)$ is already zero over $\mathbf Q$, then the $j$th canonical
block is absent from the characteristic-zero all-weight identity itself and hence
from every good-prime reduction. In that case all statements below apply after
separating these fixed zero coordinates from the nonzero $U_j$. The generic
formulation \eqref{eq:Gamma-m} is written under the simplifying assumption that
no $U_j$ vanishes identically.
\end{remark}

Assume first that none of the $U_j$ is zero. For $1\le m\le d$ define
\begin{equation}\label{eq:Gamma-m}
\Gamma_m(r)
=
\gcd_{\substack{J\subseteq\{1,\ldots,d\}\\|J|=d-m+1}}
\prod_{j\in J}|U_j(r)|.
\end{equation}
Thus
\[
\Gamma_1(r)=\prod_{j=1}^d|U_j(r)|,\qquad
\Gamma_d(r)=\gcd(U_1(r),\ldots,U_d(r)).
\]

\begin{theorem}[Exact simultaneous-vanishing layers]\label{thm:Gamma-layer}
Let $p$ be all-weight good and let $(D_1,\ldots,D_d)$ be the unique congruence
coefficients. For $1\le m\le d$,
\begin{equation}\label{eq:Gamma-layer}
\boxed{\;
p\mid\Gamma_m(r)
\quad\Longleftrightarrow\quad
\#\{j:D_j=0\}\ge m.
\;}
\end{equation}
Consequently
\[
\operatorname{Supp}\Gamma_d
\subseteq\cdots\subseteq
\operatorname{Supp}\Gamma_2
\subseteq\operatorname{Supp}\Gamma_1.
\]
\end{theorem}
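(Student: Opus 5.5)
By Theorem~\ref{thm:cramer-numerator}, the statement reduces to elementary divisibility. Set $Z_p=\{j: D_j=0\}$. That theorem gives $Z_p=\{j: p\mid U_j(r)\}$, because each $V_j(r)$ is a $p$-adic unit. It therefore suffices to prove the following purely arithmetic claim about the nonzero integers $U_1,\ldots,U_d$: a prime $p$ divides $\Gamma_m$ if and only if at least $m$ of the $U_j$ are divisible by $p$.

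For the direction $(\Leftarrow)$, suppose $|Z_p|\ge m$. Any subset $J$ with $|J|=d-m+1$ omits only $m-1$ indices. Since $|Z_p|\ge m$, the set $J$ must contain some index of $Z_p$. Hence $p$ divides $\prod_{j\in J}|U_j|$ for every such $J$, and so $p$ divides the gcd $\Gamma_m$.

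For the direction $(\Rightarrow)$, suppose $|Z_p|\le m-1$. The complement of $Z_p$ then has at least $d-m+1$ elements, so we may choose $J$ of size $d-m+1$ inside it. The corresponding product is a product of nonzero integers, none divisible by $p$. It is therefore prime to $p$, and so $p\nmid\Gamma_m$. Note that the nonvanishing assumption on the $U_j$ ensures that each product is a nonzero integer and that the gcd is well defined and positive.

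The support chain then follows from the equivalence. If $p\mid\Gamma_{m+1}$, then $|Z_p|\ge m+1\ge m$, so $p\mid\Gamma_m$; this holds for every good prime. One caveat is needed, because $\operatorname{Supp}$ refers to all prime divisors, not only the good ones. For an arbitrary prime $p$, define $Z_p$ directly as $\{j:p\mid U_j\}$. The divisibility argument above did not use goodness, so the inclusion $\operatorname{Supp}\Gamma_{m+1}\subseteq\operatorname{Supp}\Gamma_m$ holds unconditionally. The only genuine input is the identification $D_j=0\iff p\mid U_j$ from Theorem~\ref{thm:cramer-numerator}; the rest is routine, and I expect no real obstacle beyond this identification, which is where goodness of $p$ enters.
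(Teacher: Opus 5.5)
Your proof is correct and follows the paper's argument essentially verbatim: a pigeonhole count on the index set $\{j:p\mid U_j\}$ gives both directions, and Theorem~\ref{thm:cramer-numerator} identifies this set with $\{j:D_j=0\}$. Your remark that the combinatorial step never uses goodness is a worthwhile sharpening of the paper's ``Consequently'': it shows the support chain holds for all primes, not only for good ones.
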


\begin{proof}
Let $\mu=\#\{j:p\mid U_j\}$. The definition of $\Gamma_m$ takes the gcd
of all products supported on $d-m+1$ indices. If $\mu\ge m$, the complement
of the divisible-index set has size at most $d-m$; consequently no set of
$d-m+1$ indices can avoid all divisible numerators. Every product entering the
gcd is therefore divisible by $p$, and $p\mid\Gamma_m$.

Conversely, if $\mu<m$, then at least $d-m+1$ of the $U_j$ are prime to $p$.
Choose a set $J$ consisting only of such indices. The corresponding product in
\eqref{eq:Gamma-m} is prime to $p$, so the gcd is also prime to $p$.
Theorem~\ref{thm:cramer-numerator} identifies $\mu$ with the number of zero
entries in the unique congruence vector, giving \eqref{eq:Gamma-layer}.
\end{proof}

\begin{definition}[Cramer polynomial]\label{def:cramer-poly}
Define
\begin{equation}\label{eq:cramer-poly}
\mathscr C_r(X)=\prod_{j=1}^d\bigl(V_j(r)X-U_j(r)\bigr)\in\mathbf Z[X].
\end{equation}
\end{definition}

\begin{corollary}[Multiplicity form of simultaneous vanishing]\label{cor:poly-zeros}
For every all-weight good prime $p$,
\begin{equation}\label{eq:poly-zeros}
\boxed{\;
\#\{j:D_j=0\}
=
\operatorname{ord}_{X=0}\bigl(\mathscr C_r(X)\bmod p\bigr).
\;}
\end{equation}
\end{corollary}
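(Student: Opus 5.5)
The plan is to reduce the polynomial $\mathscr C_r(X)$ modulo $p$ factor by factor and then use Theorem~\ref{thm:cramer-numerator} to identify which factors vanish at $X=0$.

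First, since $p$ is all-weight good, Theorem~\ref{thm:cramer-numerator} shows that every $V_j(r)$ is a $p$-adic unit. So each linear factor reduces to $\overline{V_j}X-\overline{U_j}\in\mathbf F_p[X]$, and its leading coefficient $\overline{V_j}$ is nonzero. In particular $\mathscr C_r(X)\bmod p$ is a nonzero polynomial of exact degree $d$, and its order of vanishing at $X=0$ is well defined and finite.

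Second, because $\mathbf F_p[X]$ is a unique factorization domain and $\operatorname{ord}_{X=0}$ is additive on products of nonzero polynomials, we get
\[
\operatorname{ord}_{X=0}\bigl(\mathscr C_r\bmod p\bigr)=\sum_{j=1}^d\operatorname{ord}_{X=0}\bigl(\overline{V_j}X-\overline{U_j}\bigr).
\]
Each summand is a degree-one polynomial with nonzero leading coefficient. It equals $1$ if $\overline{U_j}=0$, since the factor is then $\overline{V_j}X$ with $\overline{V_j}$ a unit. It equals $0$ if $\overline{U_j}\ne0$, since the constant term is then nonzero.

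Third, the sum therefore counts the indices $j$ with $p\mid U_j(r)$. By \eqref{eq:cramer-zero} these are exactly the indices with $D_j=0$, which gives \eqref{eq:poly-zeros}. This argument does not need the assumption that no $U_j$ vanishes over $\mathbf Q$. If $U_j=0$ (so $V_j=1$ in lowest terms), the factor is $X$, which contributes $1$, consistent with $D_j=0$.

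The only potential obstacle is making sure the reduced polynomial is nonzero, so that its order at $0$ is meaningful rather than infinite. That is exactly what the $p$-unit property of the $V_j$ guarantees, and it is available from Theorem~\ref{thm:cramer-numerator}. The rest is bookkeeping.
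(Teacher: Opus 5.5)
Your proof is correct and follows the paper's own argument. You use Theorem~\ref{thm:cramer-numerator} to see that each $V_j$ is a unit, note that the factor $\overline{V_j}X-\overline{U_j}$ contributes order $1$ at $X=0$ exactly when $p\mid U_j$, add the orders, and conclude with \eqref{eq:cramer-zero}. Your extra checks are sound refinements of the same route: the reduced polynomial is nonzero of exact degree $d$, and the case $U_j=0$ (where $V_j=1$) needs no separate hypothesis.
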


\begin{proof}
Every $V_j$ is a unit modulo a good prime. The factor $V_jX-U_j$ is divisible
by $X$ modulo $p$ exactly when $p\mid U_j$. The multiplicities add, and
Theorem~\ref{thm:cramer-numerator} completes the proof.
\end{proof}

Writing
\[
\mathscr C_r(X)=c_0+c_1X+\cdots+c_dX^d,
\]
Corollary~\ref{cor:poly-zeros} may equivalently be stated as
\[
\#\{j:D_j=0\}\ge m
\quad\Longleftrightarrow\quad
c_0\equiv c_1\equiv\cdots\equiv c_{m-1}\equiv0\pmod p.
\]
Thus the same simultaneous-vanishing information can be read either from the
layer integers $\Gamma_m$ or from the order of vanishing of one polynomial.
The former isolates prime support in integers, whereas the latter packages all
layers into a single object.

\section{Sparse Ramanujan-type specializations}
\label{sec:newcongruences}

For almost every prime in the safe range, Section~\ref{sec:uniquecong} already
produces a canonical reduction of the characteristic-zero identity. The examples
below isolate the additional arithmetic phenomenon: one or more Cramer
coordinates vanish modulo $p$, so the unique canonical formula becomes strictly
sparser. It is these vanishing-coordinate specializations that we call
Ramanujan-type congruences.

Once the residue vector $(Z_1,\ldots,Z_d)\bmod p$ and the nonzero residue of
$\det N_r$ are known, the congruence for every $n$ follows from the modular-form
identity; one does not need to verify infinitely many values of $n$. Coordinates
which vanish modulo $p$ disappear from the canonical formula.

For even $a,b\ge4$ put
\begin{align}
\mathfrak h_{a,b}(n)
={}&-\frac{B_a}{2}\,n\sigma_{b-1}(n)
+\frac{B_b}{2}\,n\sigma_{a-1}(n)\notag\\
&+\sum_{m=1}^{n-1}
\bigl(an-(a+b)m\bigr)
\sigma_{a-1}(m)\sigma_{b-1}(n-m).
\label{eq:hab}
\end{align}
Thus $\mathfrak h_{a_j,b_j}=h_{r,j}$.

\begin{theorem}[Four exact canonical congruences]\label{thm:new-congruences}
For every $n\ge1$,
\begin{align}
C_7(n)
&\equiv 3\tau(n)+27\,\mathfrak h_{22,4}(n)\pmod{29},
\label{eq:new-r7}\\
C_8(n)
&\equiv 16\tau(n)+7\,\mathfrak h_{24,4}(n)\pmod{41},
\label{eq:new-r8}\\
C_{11}(n)
&\equiv48\tau(n)+52\,\mathfrak h_{30,4}(n)
+48\,\mathfrak h_{26,8}(n)\pmod{179},
\label{eq:new-r11}\\
C_{28}(n)
&\equiv172\tau(n)+93\,\mathfrak h_{64,4}(n)
+116\,\mathfrak h_{60,8}(n)
+171\,\mathfrak h_{56,12}(n)\pmod{227}.
\label{eq:new-r28}
\end{align}
In the four cases the vanishing coordinate sets are respectively
\[
\{2\},\qquad \{2\},\qquad \{2\},\qquad \{2,4\}.
\]
Thus the displayed formulas are obtained from the unique canonical congruence
by omitting precisely those zero-coordinate blocks.
\end{theorem}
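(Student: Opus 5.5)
The plan is to specialize Theorem~\ref{thm:unique-congruence} and Theorem~\ref{thm:cramer-numerator} to the four pairs $(r,p)=(7,29),(8,41),(11,179),(28,227)$. In each case the argument has four parts: check that $p$ is all-weight good, compute the rational Cramer vector $Z(r)$, reduce it modulo $p$, and rewrite the resulting congruence in the normalized form displayed.

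\emph{Goodness.} For $r=7,8,11,28$ we have $K=28,30,36,70$ and $d=2,2,3,5$. In every case $p\ge K-3$ ($29\ge25$, $41\ge27$, $179\ge33$, $227\ge67$), so Proposition~\ref{prop:safe-range} gives coefficient-admissibility. It remains to check $\det N_r\not\equiv0\pmod p$. This is a finite computation: the entries $h_{r,j}(n)$, $1\le n\le d$, come from \eqref{eq:h}, and the Bernoulli numbers involved are $p$-integral. I would reduce $N_r$ modulo $p$ directly and evaluate the $d\times d$ determinant in $\mathbf F_p$.

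\emph{Coordinates.} Compute $z_r$ from \eqref{eq:ztarget}, using the recurrence \eqref{eq:taurec} for $\tau(1),\dots,\tau(d)$. Then solve $N_rZ\equiv z_r$ over $\mathbf F_p$; by Theorem~\ref{thm:unique-congruence} the solution equals $Z_j(r)\bmod p$. The claimed vanishing sets $\{2\}$, $\{2\}$, $\{2\}$, $\{2,4\}$ should then appear as exactly the zero entries of the solution. Equivalently, by Theorem~\ref{thm:cramer-numerator}, they are the indices $j$ with $p\mid U_j(r)$, that is, with $\det N_r[j\leftarrow z_r]\equiv0$. For $r=11$ this can be cross-checked against \eqref{eq:r11coords}: we need $179\mid27566$, and indeed $27566=2\cdot 7\cdot 11\cdot 179$. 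We also need the other two numerators to be prime to $179$.

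\emph{Normalization.} Starting from \eqref{eq:unique-congruence}, multiply out the constants modulo $p$. The coefficient of $\tau(n)$ is $B_s/(2s)\bmod p$ with $s=2r+2$; for instance $r=7$ gives $s=16$ and $B_{16}/32\bmod 29$. The coefficient of $\mathfrak h_{a_j,b_j}$ is $-D_j/(2s)\bmod p$. These residues should reproduce $3,27$; $16,7$; $48,52,48$; and $172,93,116,171$, where the surviving blocks are those with $j\notin$ the vanishing set. Here I use $\mathfrak h_{a_j,b_j}=h_{r,j}$ and the index check $(a_1,b_1)=(2r+8,4)$, $(a_3,b_3)=(2r+4,8)$, $(a_5,b_5)=(2r,12)$.

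\emph{Main obstacle.} The only real difficulty is the $r=28$, $d=5$ case. There the Bernoulli numbers $B_{58}$ and $B_{60}$ and the divisor convolutions up to $n=5$ are large, and I must certify both $\det N_{28}\not\equiv0\pmod{227}$ and the simultaneous vanishing at $j=2,4$ with nonvanishing at $j=1,3,5$. I would carry out the entire computation in $\mathbf F_{227}$ from the start. Bernoulli numbers would be reduced via Kummer-type congruences or by computing them exactly and reducing. I would then Gauss-eliminate, so that no large rational Cramer numerators are needed. Once these residues are verified, the congruence holds for all $n$ because it is the Fourier expansion of one reduced modular-form identity; no Sturm bound is required.
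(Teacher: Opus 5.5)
Your plan matches the paper's proof. You certify goodness from the safe range $p\ge K-3$ and a nonzero residue of $\det N_r$, compute the Cramer residues $Z_j(r)\bmod p$ by a finite exact calculation, and read off the displayed coefficients $B_s/(2s)$ and $-D_j/(2s)$ from \eqref{eq:unique-congruence}; for instance $B_{16}/32\equiv3\pmod{29}$ and $179\mid27566$ agree with the paper's table. The only difference is that the paper forms the Cramer determinants over $\mathbf Q$ and reduces afterwards, whereas you reduce $N_r$ and $z_r$ first and solve in $\mathbf F_p$; this is equivalent at a good prime, since $\bar N_r\bar Z=\bar z_r$ has the unique solution $Z\bmod p$.
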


\begin{proof}
Exact reduction of the Cramer systems gives
\[
\begin{array}{c|c|c|c}
r&p&(Z_1,\ldots,Z_d)\bmod p&\det N_r\bmod p\\ \hline
7&29&(6,0)&17\\
8&41&(35,0)&36\\
11&179&(10,0,23)&147\\
28&227&(108,0,164,0,140)&29
\end{array}
\]
so all four primes are good. The table is obtained by a finite exact
calculation over $\mathbf Q$: one forms $h_{r,j}(n)$ for
$1\le n,j\le d$ from \eqref{eq:h}, forms $z_r$ from \eqref{eq:Nz}
(or, equivalently, uses the recurrence \eqref{eq:taurec} for the finitely many
values of $\tau(n)$ that occur), computes the Cramer determinants in
$\mathbf Q$, and only then reduces the resulting rational numbers modulo $p$.
Thus every entry in the table is an independently checkable certificate involving
only finitely many exact integer and rational operations; no floating-point
approximation and no verification over a finite sample of target indices $n$ is
used. Reducing \eqref{eq:unique-congruence} then gives the displayed coefficients
for every $n\ge1$ simultaneously.
\end{proof}

In particular, the table in the proof is already a finite exact certificate of the
four infinite families of congruences: the determinant residues certify the basis,
and the Cramer residues certify the unique coordinates. The expanded formulas
below merely translate the compact Rankin--Cohen blocks into ordinary divisor
sums and divisor--divisor convolutions.

For reference, the first three congruences expand to
\begin{align}
C_7(n)\equiv{}&
3\tau(n)+25n\sigma_3(n)+n\sigma_{21}(n)\notag\\
&+27\sum_{m=1}^{n-1}(22n-26m)
\sigma_{21}(m)\sigma_3(n-m)\pmod{29},
\label{eq:new-r7-expanded}\\
C_8(n)\equiv{}&
16\tau(n)+34n\sigma_3(n)+32n\sigma_{23}(n)\notag\\
&+7\sum_{m=1}^{n-1}(24n-28m)
\sigma_{23}(m)\sigma_3(n-m)\pmod{41},
\label{eq:new-r8-expanded}\\
C_{11}(n)\equiv{}&
48\tau(n)+36n\sigma_3(n)+23n\sigma_{29}(n)
+37n\sigma_7(n)+35n\sigma_{25}(n)\notag\\
&+52\sum_{m=1}^{n-1}(30n-34m)
\sigma_{29}(m)\sigma_3(n-m)\notag\\
&+48\sum_{m=1}^{n-1}(26n-34m)
\sigma_{25}(m)\sigma_7(n-m)\pmod{179}.
\label{eq:new-r11-expanded}
\end{align}
The two-coordinate-vanishing example is
\begin{align}
C_{28}(n)\equiv{}&
172\tau(n)+100n\sigma_3(n)+146n\sigma_{63}(n)
+223n\sigma_7(n)+104n\sigma_{59}(n)\notag\\
&+192n\sigma_{11}(n)+198n\sigma_{55}(n)\notag\\
&+93\sum_{m=1}^{n-1}(64n-68m)
\sigma_{63}(m)\sigma_3(n-m)\notag\\
&+116\sum_{m=1}^{n-1}(60n-68m)
\sigma_{59}(m)\sigma_7(n-m)\notag\\
&+171\sum_{m=1}^{n-1}(56n-68m)
\sigma_{55}(m)\sigma_{11}(n-m)
\pmod{227}.
\label{eq:new-r28-expanded}
\end{align}

\section{Jacobian and one-variable realization}
\label{sec:jacobian}

The preceding sections describe the congruence coefficients through Fourier
coordinates. There is also a purely algebraic realization which removes the
$cusp$ factor $\Delta$ and places the entire coordinate problem in the polynomial
ring $\mathbf Q[E_4,E_6]$. Besides shortening computations, this explains why
the same Cramer vector reappears after passage to a one-variable model.

Put
\[
X=E_4,\qquad Y=E_6,
\]
so the graded ring of level-one modular forms is
$\mathbf Q[X,Y]$ with weights $4$ and $6$.

\begin{theorem}[Rankin--Cohen brackets as Jacobians]\label{thm:jacobian}
Let $f,g\in\mathbf Q[X,Y]$ be homogeneous modular forms of weights $a,b$.
Then
\begin{equation}\label{eq:jacobian}
\boxed{\;
\frac{[f,g]_1}{\Delta}
=
-3456\,
\det\begin{pmatrix}
\partial f/\partial X&\partial f/\partial Y\\
\partial g/\partial X&\partial g/\partial Y
\end{pmatrix}.
\;}
\end{equation}
\end{theorem}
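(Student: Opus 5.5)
Since the first Rankin--Cohen bracket is a derivation in each argument, I would reduce the identity to the single bracket $[X,Y]_1$ by the chain rule. The key computation will use the Serre derivative $\vartheta_k=D-\frac{k}{12}E_2$, which sends $M_k$ to $M_{k+2}$. For $f$ of weight $a$ and $g$ of weight $b$ we have
\[
[f,g]_1=afDg-b(Df)g=af\vartheta_bg-b(\vartheta_af)g,
\]
because the $E_2$ terms cancel: $af\cdot\frac{b}{12}E_2g-b\cdot\frac{a}{12}E_2fg=0$.

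The first step is to apply the chain rule to homogeneous $f\in\mathbf Q[X,Y]$. Since $\vartheta$ acts as a derivation on the graded ring, compatibly with weights, we get
\[
\vartheta_af=f_X\,\vartheta_4X+f_Y\,\vartheta_6Y.
\]
The Ramanujan identities give
\[
\vartheta_4E_4=-\tfrac13E_6,\qquad \vartheta_6E_6=-\tfrac12E_4^2.
\]
These follow from the identity $DE_4=(E_2E_4-E_6)/3$ already used in the proof of Lemma~\ref{lem:eval} and the analogous $DE_6=(E_2E_6-E_4^2)/2$.

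Next I would use Euler's identity for weighted homogeneous polynomials: $af=4Xf_X+6Yf_Y$, and similarly for $g$. Substituting both the Euler identity and the chain rule into $[f,g]_1=af\,\vartheta g-b\,(\vartheta f)\,g$ and expanding, the $f_Xg_X$ and $f_Yg_Y$ terms should cancel by antisymmetry. What remains should be
\[
(f_Xg_Y-f_Yg_X)\bigl(4X\vartheta Y-6Y\vartheta X\bigr).
\]
Equivalently, one can observe that $(f,g)\mapsto[f,g]_1$ is bilinear, antisymmetric and a biderivation on the polynomial ring. After checking this on generators, it must equal the Jacobian times $[X,Y]_1$.

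It then remains to compute $[E_4,E_6]_1=4E_4\vartheta E_6-6(\vartheta E_4)E_6=-2E_4^3+2E_6^2=-2\cdot1728\,\Delta=-3456\,\Delta$, using \eqref{eq:Delta}. This yields the identity in Theorem~\ref{thm:jacobian}.

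The main obstacle is justifying the biderivation property cleanly, which is where the Euler identity enters. If the direct cancellation is messy, I would instead verify Leibniz in the first slot directly: $[f_1f_2,g]_1=f_1[f_2,g]_1+f_2[f_1,g]_1$ follows from weights adding under multiplication together with Leibniz for $D$. I would then reduce to the generator pairs, where antisymmetry gives $[X,X]_1=[Y,Y]_1=0$. Finally I would check the sign convention \eqref{eq:RCintro} against the order $(X,Y)$ in the determinant.
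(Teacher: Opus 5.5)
Your proposal is correct and follows essentially the paper's argument: the chain rule, the weighted Euler identities and Ramanujan's identities reduce $[f,g]_1$ to $(f_Xg_Y-f_Yg_X)(4X\,DY-6Y\,DX)=-2(X^3-Y^2)(f_Xg_Y-f_Yg_X)$, and then $X^3-Y^2=1728\Delta$ finishes the proof. The only cosmetic difference is that you absorb the $E_2$-cancellation into the Serre derivative from the start and phrase the result as a biderivation determined by $[E_4,E_6]_1=-3456\Delta$, whereas the paper works with $D$ directly and cancels the $E_2$ terms explicitly using the Euler identities.
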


\begin{proof}
Ramanujan's differential identities give
\[
DX=\frac{E_2X-Y}{3},\qquad
DY=\frac{E_2Y-X^2}{2}.
\]
By the chain rule,
\[
Df=f_XDX+f_YDY,\qquad Dg=g_XDX+g_YDY.
\]
Because $f$ and $g$ are weighted homogeneous of weights $a$ and $b$,
\[
4Xf_X+6Yf_Y=af,\qquad
4Xg_X+6Yg_Y=bg.
\]
Insert the chain-rule expressions into $[f,g]_1=afDg-b(Df)g$. The terms
containing $E_2$ combine to
\[
\frac{E_2}{6}
\bigl(af(2Xg_X+3Yg_Y)-bg(2Xf_X+3Yf_Y)\bigr)=0
\]
by the weighted Euler identities. The remaining terms factor as
\[
[f,g]_1=-2(X^3-Y^2)(f_Xg_Y-f_Yg_X).
\]
Finally $X^3-Y^2=1728\Delta$, which proves \eqref{eq:jacobian}.
\end{proof}

Dividing \eqref{eq:all-coordinate} by $\Delta$ gives the weight-$s$ identity
\begin{equation}\label{eq:after-delta}
-2sG_s=\sum_{j=1}^d Z_j(r)Q_j,\qquad
Q_j:=\frac{H_j}{\Delta}\in M_s.
\end{equation}
By Theorem~\ref{thm:jacobian},
\[
Q_j=-3456\,J(G_{a_j},G_{b_j}),
\]
where $J$ denotes the polynomial Jacobian in $(X,Y)$.

For any even weight $w$ with $M_w\ne0$, write uniquely
\[
w=12m_w+4\alpha_w+6\beta_w,\qquad
\alpha_w\in\{0,1,2\},\quad \beta_w\in\{0,1\},
\]
using the residue representative $14$ rather than $2$ when necessary. Put
$A_w=3m_w+\alpha_w$ and
\begin{equation}\label{eq:Tvariable}
T=\frac{Y^2}{X^3}.
\end{equation}
Every $F\in M_w(\mathbf Q)$ then has a unique representation
\begin{equation}\label{eq:Tpoly}
F=X^{A_w}Y^{\beta_w}P_F(T),
\end{equation}
with $\deg P_F\le\dim M_w-1$. Indeed the usual basis
\[
\Delta^jE_4^{3(m_w-j)+\alpha_w}E_6^{\beta_w}
\qquad(0\le j\le m_w)
\]
becomes, after extracting the common factor $X^{A_w}Y^{\beta_w}$,
a polynomial basis in
\[
1-T=\frac{X^3-Y^2}{X^3}=\frac{1728\Delta}{E_4^3}.
\]
This also proves uniqueness of $P_F$.

\begin{proposition}[One-variable Jacobian formula]\label{prop:onevar-jac}
Suppose
\[
f=X^AY^\beta P(T),\qquad
g=X^CY^\delta Q(T)
\]
have weights $a$ and $b$. Then
\begin{equation}\label{eq:Psi}
J(f,g)
=
X^{A+C-1}Y^{\beta+\delta-1}
\left[
(A\delta-\beta C)PQ
+\frac a2\,T\,PQ'
-\frac b2\,T\,P'Q
\right].
\end{equation}
After the forced monomial cancellation, the bracket quotient
$[f,g]_1/\Delta$ is therefore represented by a polynomial in $T$.
\end{proposition}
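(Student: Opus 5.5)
The plan is to compute the two partial derivatives of $f$ and $g$ directly, regarding $P(T)$ and $Q(T)$ as functions of $(X,Y)$ through $T=Y^2/X^3$, and then expand the $2\times 2$ determinant. The only inputs are the chain rule and the relation between $(A,\beta)$ and the weight. The formula is an identity of rational functions on the open set $XY\ne0$. Both sides are polynomial or Laurent expressions, so it suffices to verify it there.

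\textbf{Step 1: partial derivatives.} The logarithmic derivatives of $T$ are
\[
\frac{\partial T}{\partial X}=-\frac{3T}{X},\qquad \frac{\partial T}{\partial Y}=\frac{2T}{Y}.
\]
For $f=X^AY^\beta P(T)$ this gives
\[
f_X=X^{A-1}Y^{\beta}\bigl(AP-3TP'\bigr),\qquad
f_Y=X^{A}Y^{\beta-1}\bigl(\beta P+2TP'\bigr),
\]
and analogously $g_X=X^{C-1}Y^{\delta}(CQ-3TQ')$ and $g_Y=X^{C}Y^{\delta-1}(\delta Q+2TQ')$. Both products $f_Xg_Y$ and $f_Yg_X$ carry the same monomial $X^{A+C-1}Y^{\beta+\delta-1}$, so this factors out of the Jacobian.

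\textbf{Step 2: expansion.} What remains is
\[
(AP-3TP')(\delta Q+2TQ')-(\beta P+2TP')(CQ-3TQ').
\]
The two $T^2P'Q'$ terms, $-6T^2P'Q'$ and $+6T^2P'Q'$, cancel. The $PQ$ coefficient is $A\delta-\beta C$. The $TPQ'$ coefficient is $2A+3\beta$. The $TP'Q$ coefficient is $-(3\delta+2C)$.

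By homogeneity, the weight of $f=X^AY^\beta P(T)$ is $a=4A+6\beta$, since $T$ has weight $0$. Likewise $b=4C+6\delta$. Hence $2A+3\beta=a/2$ and $3\delta+2C=b/2$, which is exactly \eqref{eq:Psi}.

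\textbf{Step 3: the final assertion.} Combine with Theorem~\ref{thm:jacobian}, which gives $[f,g]_1/\Delta=-3456\,J(f,g)$. Here $J(f,g)$ is a genuine polynomial in $X,Y$, homogeneous of weight $a+b-10$, hence an element of $M_{a+b-12+2}$ divided appropriately. More precisely, $[f,g]_1/\Delta$ lies in $M_{a+b-10}$, since $[f,g]_1$ has weight $a+b+2$ and $\Delta$ has weight $12$.

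The prefactor $X^{A+C-1}Y^{\beta+\delta-1}$ may carry a negative exponent, for instance $Y^{-1}$ when $\beta=\delta=0$. To remove it, use $Y^2=TX^3$ to rewrite it as $X^{A_w}Y^{\beta_w}$ times an integral or Laurent power of $T$. By the uniqueness of the representation \eqref{eq:Tpoly}, the resulting expression must coincide with $X^{A_w}Y^{\beta_w}P_F(T)$ for a polynomial $P_F$. Consequently any negative power of $T$, or the parity mismatch in $Y$, is forced to cancel against the bracketed factor. This is the meaning of ``forced monomial cancellation''.

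\textbf{Main obstacle.} I expect the only delicate point to be this last bookkeeping step. One must check that, when $\beta=\delta=0$ (or $\beta=\delta=1$), the bracketed polynomial is divisible by the appropriate power of $T$, or else absorbs the odd $Y$-exponent. I would handle this not by a direct divisibility computation but by invoking polynomiality of $J(f,g)$ together with the uniqueness in \eqref{eq:Tpoly}. The identity \eqref{eq:Psi} itself is a routine expansion.
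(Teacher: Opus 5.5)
Your Steps 1 and 2 coincide with the paper's computation, and \eqref{eq:Psi} is correctly derived. You differ from the paper only in the final assertion.

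The paper reads that assertion off \eqref{eq:Psi} directly. A negative exponent appears only when $\beta=\delta=0$. In that case the coefficient $A\delta-\beta C$ of $PQ$ vanishes, so every surviving term carries a factor $T=Y^2/X^3$. That factor absorbs $Y^{-1}$, leaving $X^{A+C-4}Y$ times the polynomial $\frac a2PQ'-\frac b2P'Q$.

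You argue structurally instead. $J(f,g)$ is a polynomial of weight $w=a+b-10$. The prefactor differs from $X^{A_w}Y^{\beta_w}$ by a weight-zero monomial $T^k$. Uniqueness then forces $T^k$ times the bracket to equal the polynomial $P_F$.

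This is valid, and it gives the bound $\deg P_F\le\dim M_w-1$ for free. One small gap: the uniqueness in \eqref{eq:Tpoly} is stated for polynomials, but you need it for Laurent polynomials in $T$. That extension is immediate because $T$ is transcendental over $\mathbf Q$, and you should say so.

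The paper's route buys explicitness: it exhibits the cancellation rather than inferring it. The divisibility check you chose to avoid is a one-line observation already visible in your Step 2 formula. It also shows your concern about $\beta=\delta=1$ is unnecessary, since there $Y^{\beta+\delta-1}=Y$ and no absorption is needed.
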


\begin{proof}
Since
\[
T_X=-\frac{3T}{X},\qquad T_Y=\frac{2T}{Y},
\]
one has
\[
f_X=X^{A-1}Y^\beta(AP-3TP'),\qquad
f_Y=X^AY^{\beta-1}(\beta P+2TP'),
\]
and similarly
\[
g_X=X^{C-1}Y^\delta(CQ-3TQ'),\qquad
 g_Y=X^CY^{\delta-1}(\delta Q+2TQ').
\]
Factoring $X^{A+C-1}Y^{\beta+\delta-1}$ from the determinant leaves four
terms. The two $P'Q'$ contributions cancel, while the coefficients of $PQ'$ and
$P'Q$ simplify using
\[
2A+3\beta=\frac a2,\qquad
2C+3\delta=\frac b2.
\]
This gives \eqref{eq:Psi}. If $\beta+\delta=0$, the displayed prefactor has
$Y^{-1}$, but then the constant term $(A\delta-\beta C)PQ$ vanishes and every
remaining term contains $T=Y^2/X^3$; hence the apparent negative power cancels.
Thus the Jacobian is again a genuine polynomial modular form, as required.
\end{proof}

Let $R_j(T)$ denote the polynomial of $Q_j$ in the common weight-$s$
representation \eqref{eq:Tpoly}, and let $P_s(T)$ denote that of $G_s$.
Write the coefficient vectors in the basis
\[
1,\ (T-1),\ldots,(T-1)^{d-1},
\]
let $\mathcal C_r$ be the resulting $d\times d$ matrix with columns $R_j$,
and let $e_r$ be the coefficient vector of $-2sP_s$.

\begin{theorem}[Polynomial Cramer model]\label{thm:poly-cramer}
One has
\begin{equation}\label{eq:poly-system}
\mathcal C_r Z=e_r,
\qquad
Z_j(r)=
\frac{\det\mathcal C_r[j\leftarrow e_r]}{\det\mathcal C_r}.
\end{equation}
Moreover
\begin{equation}\label{eq:det-change}
\det N_r=(-1728)^{\binom d2}\det\mathcal C_r.
\end{equation}
In particular all primes $p>3$ see exactly the same Cramer ratios and the
same basis degeneracy in the Fourier and polynomial models.
\end{theorem}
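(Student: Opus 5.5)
The plan is to transport the Fourier-coordinate system of Lemma~\ref{lem:all-coordinate} into the $T$-polynomial model by one explicit change of basis. Both assertions then follow from a single matrix factorization.

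\emph{The linear system.} Lemma~\ref{lem:dim} makes division by $\Delta$ an isomorphism $S_K\to M_s$. Theorem~\ref{thm:mainbasis} says the $H_j$ (nonzero rescalings of the $R_j$) form a basis of $S_K$, so the $Q_j=H_j/\Delta$ form a basis of $M_s$. The map $F\mapsto P_F$ of \eqref{eq:Tpoly} is $\mathbf Q$-linear and injective on $M_s(\mathbf Q)$, with image the polynomials of degree $\le d-1$. Taking coordinates in the basis $1,(T-1),\dots,(T-1)^{d-1}$ is therefore a linear isomorphism $M_s(\mathbf Q)\to\mathbf Q^d$. Applying it to \eqref{eq:after-delta} gives $\mathcal C_rZ=e_r$ at once. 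The columns of $\mathcal C_r$ are the images of a basis, so $\mathcal C_r$ is nonsingular, and Cramer's rule gives the displayed formula for $Z_j(r)$.

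\emph{The determinant relation.} I would identify the basis monomials explicitly. Write $s=12m+4\alpha+6\beta$ with the convention of \eqref{eq:Tpoly}, so that $A_s=3m+\alpha$ and $d=\dim M_s=m+1$. Since $K=s+12$ has the same residue class modulo $12$, this $(\alpha,\beta)$ is the pair from the table attached to $\kappa$, and $m=d-1$. Because $T-1=-1728\Delta/X^3$,
\[
X^{A_s}Y^{\beta}(T-1)^k=(-1728)^k\Delta^kE_4^{3(d-1-k)+\alpha}E_6^{\beta},
\]
and after multiplying by $\Delta$ this becomes $(-1728)^k\Phi_{k+1}$, with $\Phi_m$ as in \eqref{eq:Fbasis}. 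Hence if $\mathcal C_r=(c_{kj})$, then
\[
H_j=\Delta Q_j=\sum_{k=0}^{d-1}(-1728)^k c_{kj}\,\Phi_{k+1}.
\]
So the coordinate matrix of the $H_j$ in the basis $\Phi_1,\dots,\Phi_d$ is $\operatorname{diag}((-1728)^k)_{k=0}^{d-1}\,\mathcal C_r$.

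\emph{Passing to Fourier coefficients.} By Lemma~\ref{lem:all-coordinate}, $N_r$ is the first-$d$ Fourier coefficient matrix of the $H_j$. Exactly as in the proof of Theorem~\ref{thm:general}, this gives
\[
N_r=Q\operatorname{diag}((-1728)^k)\mathcal C_r,
\]
where $Q$ is the unit lower triangular first-$d$ coefficient matrix of the $\Phi_m$. Taking determinants yields \eqref{eq:det-change}, since $\sum_{k=0}^{d-1}k=\binom d2$. For $p>3$, the factor $1728=2^63^3$ is a $p$-adic unit. Therefore $\det N_r$ and $\det\mathcal C_r$ have the same $p$-adic valuation, so the two models have the same basis degeneracy modulo $p$. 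The Cramer ratios are literally the same rational numbers $Z_j(r)$, so their reductions agree.

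The main obstacle is bookkeeping rather than ideas. One must check that the weight-$s$ normalization in \eqref{eq:Tpoly} matches \eqref{eq:Fbasis} exactly. In particular the $\kappa=14$ case (using representative $14$, not $2$) must give $m_s=d-1$. The sign and power of $-1728$ attached to each $(T-1)^k$ must also be tracked. I would verify these against the $d=1$ case, where both sides reduce to a single scalar.
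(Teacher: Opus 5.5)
Your proposal is correct and follows essentially the paper's argument. Both proofs read \eqref{eq:poly-system} off \eqref{eq:after-delta} by linearity of $F\mapsto P_F$, and both obtain \eqref{eq:det-change} from a triangular change of coordinates whose only non-unit diagonal entries are the powers $(-1728)^k$. The difference is cosmetic: you identify $\Delta X^{A_s}Y^{\beta_s}(T-1)^k=(-1728)^k\Phi_{k+1}$ exactly and reuse the unit lower-triangular coefficient matrix of the $\Phi_m$. The paper instead uses only the leading terms $(T-1)^k=(-1728)^kq^k+O(q^{k+1})$, followed by multiplication by $\Delta=q+O(q^2)$.
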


\begin{proof}
Equation~\eqref{eq:poly-system} is simply
\eqref{eq:after-delta} after the common factor
$X^{A_s}Y^{\beta_s}$ has been removed. Thus the polynomial model changes the
coordinate realization but not the vector $Z$.

Next,
\[
T=\frac{E_6^2}{E_4^3}=1-1728q+O(q^2),
\]
so for every $m\ge0$,
\[
(T-1)^m=(-1728)^mq^m+O(q^{m+1}).
\]
The common factor $X^{A_s}Y^{\beta_s}$ has constant term $1$. Therefore the
matrix carrying coefficients in the ordered basis
$1,T-1,\ldots,(T-1)^{d-1}$ to the first $d$ Fourier coefficients is lower
triangular with diagonal
\[
1,-1728,(-1728)^2,\ldots,(-1728)^{d-1}.
\]
Its determinant is $(-1728)^{\binom d2}$.

Finally, multiplication by $\Delta=q+O(q^2)$ sends the first $d$ coefficients
of $Q_j$ to the first $d$ positive coefficients of $H_j=\Delta Q_j$ through a
unit lower-triangular transformation. Hence this second step has determinant
$1$. Composing the two transformations gives \eqref{eq:det-change}. Since the
factor $1728=2^6 3^3$ is a unit at every prime $p>3$, the Fourier and polynomial
models have exactly the same good-prime determinant condition and the same
Cramer ratios there.
\end{proof}

\begin{remark}[Two equivalent coordinate pictures]
The Fourier model is best suited to writing the congruences for $C_r(n)$, while
the polynomial model isolates the algebraic structure of the canonical basis.
Theorem~\ref{thm:poly-cramer} shows that these are not two different
constructions: away from $2$ and $3$ they are related by an invertible triangular
change of integral coordinates. In particular, the uniqueness theorem of
Section~\ref{sec:uniquecong} may be proved in either model.
\end{remark}


\begin{thebibliography}{99}

\bibitem{CPZ}
Y.~Choie, Y.~K.~Park and D.~Zagier,
\emph{Periods of modular forms on $\Gamma_0(N)$ and products of Jacobi theta
functions}, J. Eur. Math. Soc. \textbf{21} (2019), no.~5, 1379--1410.
\href{https://doi.org/10.4171/JEMS/864}{doi:10.4171/JEMS/864}.

\bibitem{Fukuhara}
S.~Fukuhara, \emph{Explicit formulas for Hecke operators on cusp forms, Dedekind
symbols and period polynomials}, J. Reine Angew. Math. \textbf{607} (2007),
163--216.

\bibitem{LNX}
A.~Lei, T.~Ni and H.~Xue, \emph{Linear independence of even periods of modular
forms}, J. Number Theory \textbf{248} (2023), 120--139.

\bibitem{NSSXY}
T.~Ni, A.~Song, Y.~Su, H.~Xue and A.~Yin,
\emph{Explicit generators of the space of modular forms}, Ramanujan J.
\textbf{70} (2026), Art.~47.
\href{https://doi.org/10.1007/s11139-026-01414-5}{doi:10.1007/s11139-026-01414-5}.

\bibitem{RS}
B.~Ramakrishnan and B.~Sahu, \emph{Identities for the Ramanujan tau function and
certain convolution sum identities for the divisor functions}, in Number Theory,
RMS Lecture Notes Series, no.~23, 2016, 63--75.

\bibitem{Xue}
H.~Xue, \emph{Rankin--Cohen brackets of Eisenstein series}, Int. J. Number Theory
\textbf{19} (2023), no.~7, 1563--1570.

\bibitem{XueMixed}
H.~Xue, \emph{Linear independence between odd and even periods of modular forms},
New York J. Math. \textbf{30} (2024), 521--531.

\bibitem{Zagier}
D.~Zagier, \emph{Elliptic modular forms and their applications}, in The 1-2-3 of
Modular Forms, Springer, 2008, 1--103.

\end{thebibliography}
\end{document}